\def\0{\emptyset}
\begin{document}
\newtheorem{claim}{Claim}[section]
\newtheorem{theorem}{Theorem}[section]
\newtheorem{corollary}[theorem]{Corollary}
\newtheorem{definition}[theorem]{Definition}
\newtheorem{conjecture}[theorem]{Conjecture}
\newtheorem{question}[theorem]{Question}
\newtheorem{lemma}[theorem]{Lemma}
\newtheorem{algorithm}[theorem]{Algorithm}
\newtheorem{assump}[theorem]{Assumption}
\newtheorem{proposition}[theorem]{Proposition}
\newtheorem{Observation}[theorem]{Observation}
\newtheorem{Case}[theorem]{Case}
\newenvironment{proof}{\noindent {\bf
Proof.}}{\rule{2mm}{2mm}\par\medskip}
\newcommand{\remark}{\medskip\par\noindent {\bf Remark.~~}}
\newcommand{\pp}{{\it p.}}
\newcommand{\de}{\em}

\title{\bf Anti-Ramsey problems in the generalized Petersen graphs for cycles}
\author{{{\small\bf Huiqing LIU$^1$}\thanks{Partially supported by NNSFC (No.
11971158), email: hql\_2008@163.com; }\quad{\small\bf Mei
LU$^2$}\thanks{Partially supported by NNSFC (No. 12171272), email:
lumei@mail.tsinghua.edu.cn; }\quad{\small\bf Shunzhe ZHANG$^1$}\thanks{Partially
supported by Youth Project Funds of Hubei Provincial Department of Education, email: shunzhezhang@hubu.edu.cn.}
}\\ {\small $^1$Hubei Key Laboratory of Applied Mathematics, Faculty of
Mathematics and Statistics, }\\{\small Hubei University, Wuhan 430062, China}\\
{\small $^2$Department of Mathematical Sciences, Tsinghua University, Beijing
100084, China.}}
\date{}

\maketitle \baselineskip 17.0pt

\begin{abstract}

The anti-Ramsey number $Ar(G, H)$ is the maximum number of colors in an
edge-coloring of $G$ with no rainbow copy of $H$. In this paper, we determine
the exact anti-Ramsey number in the generalized Petersen graph $P_{n,k}$ for
cycles $C_d$, where $1\leq k\leq \lfloor \frac{n-1}{2} \rfloor$ and $5\le d \le
6$. We also give an algorithm to obtain the upper bound or lower bound of anti-Ramsey number.

\vskip 0.1cm

{\bf Keywords:}  anti-Ramsey number; rainbow; generalized Petersen graph; cycle
\end{abstract}

\section{Introduction}

Let $G=(V(G), E(G))$ be a graph. We denote by $E(v)$ the set of all edges
incident with $v$ in $G$, and by $C_d$ a cycle of order $d$. Let $M(G)$ be the
maximum number of vertex-disjoint cycles with length at least $2$ in $G$.

An edge-coloring of a graph $G$ is a mapping $\phi: E(G)\rightarrow
\{1,2,\ldots,c\}$, where $\{1,2,\ldots,c\}$ is a set of colors.
An edge colored graph is called {\em rainbow} if all the colors on the edges are
distinct. A {\em rainbow copy} of a graph $H$ in an edge-colored graph $G$ is a
subgraph of $G$ isomorphic to $H$ such that the coloring restricted to $H$ is
rainbow. Given two graphs $G$ and $H$, the {\em anti-Ramsey number} $Ar(G, H)$ is
the maximum number of colors in an edge-coloring of $G$ which has no rainbow copy
of $H$. If there is no copy of $H$ in $G$, then $Ar(G, H)=|E(G)|$.

The study of anti-Ramsey theory was initiated by Erd$\ddot{\mbox{o}}$s,
Simonovits and S$\acute{\mbox{o}}$s in 1973 \cite{Erd01} and considered in the
classical case when $G=K_n$. They presented a close relationship between the
anti-Ramsey number and Tur$\acute{\mbox{a}}$n number. Since then, plentiful
results were researched for a variety of graphs $H$, including cycles
\cite{Alon83,Axen04,Mon05,Jiang03,Jin09,Xu}, cliques \cite{Erd01,Mon02}, trees
\cite{Jahan16,Jia19}, and matchings \cite{Haas12,Li09}. Some other graphs were
also considered as the host graphs in anti-Ramsey problems, such as hypergraphs
\cite{Gu20}, hypecubes \cite{Axen07}, complete split graphs
\cite{Gorgol16,Jin18}, and triangulations \cite{Hor12,Jendrol14,Lan19}.

For cycles, Erd$\ddot{\mbox{o}}$s, Simonovits and S$\acute{\mbox{o}}$s
\cite{Erd01} showed that $Ar(K_n, C_3)=n-1$. Alon \cite{Alon83} showed that
$Ar(K_n,C_4)=\lfloor \frac{4n}{3} \rfloor -1$. Montellano-Ballesteros and
Neumann-Lara \cite{Mon05} proved that $Ar(K_n,
C_d)=(\frac{d-2}{2}+\frac{1}{d-1})n+O(1)$ for $3\leq d\leq n$. In 2015,
Hor$\check{\mbox{n}}$$\acute{\mbox{a}}$k, Jendrol, Schiermeyer and
Sot$\acute{\mbox{a}}$k \cite{Hor12} investigated the anti-Ramsey numbers for
cycles in plane triangulations. Lan, Shi and Song \cite{Lan19} gave the upper and
lower bounds of the anti-Ramsey number $Ar(W_n, C_d)$ for $n\geq d-1$ and $d\geq
6$. Recently, Xu, Lu and Liu \cite{Xu} completely solved the anti-Ramsey numbers
of cycles in wheel graphs and determined some exact values of $Ar(G, C_d)$ when
the host graph $G$ is $P_m\times P_n$, $P_m\times C_n$, $C_m\times C_n$ and
cyclic Cayley graph, respectively.

Here, we are interested in the anti-Ramsey problems in generalized Petersen
graphs for cycles. Let $n\geq 3$ and $k$ be integers such that $1\leq k\leq n-1$.
The generalized Petersen graph $P_{n,k}$ is a graph of order $2n$ with
$V(P_{n,k})=\{u_i,v_i : 0\leq i\leq n-1\}$ and edge set
$E(P_{n,k})=\{u_iu_{i+1},u_iv_i,v_iv_{i+k} : 0\leq i\leq n-1\}$, where subscripts
are taken modulo $n$. Let $O_n=\{u_0,u_1,\ldots,u_{n-1}\}$ and
$I_n=\{v_0,v_1,\ldots,v_{n-1}\}$. Then $V(P_{n,k})=O_n\cup I_n$. The subgraph
induced by $O_n$ is called the outer rim, while the subgraph induced by $I_n$ is
called the inner rim. A spoke of $P_{n,k}$ is an edge of the form $u_iv_i$ for
some $0\leq i\leq n-1$. Note that $P_{n,k}$ is isomorphic to $P_{n,n-k}$. Hence
we can always assume that $1\leq k\leq \lfloor \frac{n-1}{2} \rfloor$.

The generalized Petersen graph has been extensively investigated as many nice
structural and algorithmic properties.

\begin{proposition}
\label{1.1}
$(i)$ $P_{n,k}$ contains $C_5$ if and only if $n\in \{3,5\}$ or $k\in
\{2,\frac{n}{5},\frac{2n}{5},\frac{n-1}{2}\}$;

$(ii)$ $P_{n,k}$ contains $C_6$ if and only if $k\in
\{1,3,\frac{n}{6},\frac{n-1}{3},\frac{n+1}{3},\frac{n-2}{2}\}$.
\end{proposition}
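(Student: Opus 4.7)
The plan is to classify all cycles of length $d\in\{5,6\}$ in $P_{n,k}$ by the number $b$ of spoke edges they contain. Since the spokes are the only edges joining the outer rim $O_n$ to the inner rim $I_n$, any cycle crosses between rims an even number of times, so $b$ is even. Moreover, distinct spokes are vertex-disjoint, so a $d$-cycle contains at most $\lfloor d/2\rfloor$ spokes. Consequently, for $d\le 6$ we must have $b\in\{0,2\}$, and the proof naturally splits along this dichotomy.

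In the case $b=0$, the cycle lies either in the outer rim (which is a $C_n$) or in the inner rim, a disjoint union of $\gcd(n,k)$ cycles of common length $n/\gcd(n,k)$. So such a $C_d$ exists iff $n=d$ or $n/\gcd(n,k)=d$. Intersecting the second condition with $1\le k\le\lfloor(n-1)/2\rfloor$ gives the inner-rim values $k\in\{n/5,2n/5\}$ for $d=5$ and $k=n/6$ for $d=6$, which account for the corresponding entries in the statement.

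In the case $b=2$, the cycle decomposes as an outer path of length $p\ge 1$, a spoke, an inner path of length $q\ge 1$, and a second spoke, with $p+q=d-2$. For each feasible pair $(p,q)$ the inner path from $v_{i+p}$ back to $v_i$ is a sequence of $q$ steps of size $\pm k$ whose signed sum is congruent to $-p\pmod n$. Translating each $(p,q)$ into the corresponding congruence and solving over $1\le k\le\lfloor(n-1)/2\rfloor$ produces $k=(n-1)/2$ from $(1,2)$ and $k=2$ from $(2,1)$ for $d=5$, and $k\in\{(n-1)/3,(n+1)/3\}$ from $(1,3)$, $k\in\{1,(n-2)/2\}$ from $(2,2)$, and $k=3$ from $(3,1)$ for $d=6$. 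The converse (``if'') direction is then immediate: for each listed value of $k$ an explicit $C_d$ is exhibited by reversing the above decomposition.

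The principal technical point is the simplicity check on the inner paths, since the signed-sum congruence only guarantees a closed walk of the right length, not one with distinct vertices. The case $d=6$, $(p,q)=(1,3)$ is the most delicate: among the eight sign patterns for the three inner steps, only the two monotone patterns $(+,+,+)$ and $(-,-,-)$ trace a simple path, since every mixed pattern revisits either the start vertex or its neighbour. This is precisely what forces $3k\equiv\pm 1\pmod n$ and hence $k=(n\mp 1)/3$. An analogous but shorter check dispatches $(2,2)$, and the remaining small-$n$ sporadic configurations (such as $P_{3,1}$ for $d=5$, or $P_{4,1}$ for $d=6$) are verified by direct inspection; in each of these instances the parameter $k$ already lies among the values listed in the statement, completing the proof.
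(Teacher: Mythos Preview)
Your proof is correct and follows essentially the same approach as the paper's. Both arguments classify $C_d$ by how the cycle interacts with the bipartition $V(P_{n,k})=O_n\cup I_n$: the paper cases on $|V(C_d)\cap O_n|$, while you case on the number $b$ of spokes and then on the pair $(p,q)$; for $b=2$ these are equivalent via $|V(C_d)\cap O_n|=p+1$, and $b=0$ corresponds to $|V(C_d)\cap O_n|\in\{0,d\}$. Your treatment of the inner-path simplicity (the sign-pattern analysis for $(p,q)=(1,3)$) is a bit more explicit than the paper's, which handles the same point tersely by immediately writing $w_5=v_{1\pm 2k}$, but the content is the same.
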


\begin{proof} Note that $V(P_{n,k})=O_n\cup I_n$. By the definition of $P_{n,k}$,
$|V(C_5)\cap O_n|\in \{0,2,3,5\}$ and $|V(C_6)\cap O_n|\in \{0,2,3,4,6\}$.
Obviously, $P_{n,1}$ contains $C_5$ for $n\in \{3,5\}$ and $C_6$ for $n\geq 3$.
So, in the following, we can assume $k\geq 2$.

Let $C_t=w_1w_2\cdots w_{t}w_1$ be a cycle of length $t$ in $P_{n,k}$.  If
$V(C_t)\subseteq O_n$, then $t=n$. If $|V(C_t)\cap O_n|=0$, i.e.,
$V(C_t)\subseteq I_n$, then we can assume $w_1=v_0$ by symmetry, and then
$C_t=v_0v_{k}v_{2k}\cdots v_{(t-1)k}v_0$, which implies $tk\equiv 0~(\mbox{mod~}
n)$ and $ik\not\equiv 0~(\mbox{mod~} n)$ for $1\le i\le t-1$. Combining this with
$1\leq k\leq \lfloor \frac{n-1}{2} \rfloor$, we have $k\in
\{\frac{n}{5},\frac{2n}{5}\}$ for $t=5$ and $k=\frac{n}{6}$ for $t=6$. Thus we
may assume $2\le |V(C_t)\cap O_n|\le t-1$. Without loss of generality, we assume
$w_1=u_0$ and $w_2=u_1$.

(i)  Note that $|V(C_5)\cap O_n|\in \{2,3\}$. If $|V(C_5)\cap O_n|=2$, then
$|V(C_5)\cap I_n|=3$,  $w_5=v_0$ and $w_3=v_1$. Let $w_4=v_s$, then
$v_1v_s,v_sv_0\in E(P_{n,k})$, which implies $s-1=k$ and $n-s=k$. So
$k=\frac{n-1}{2}$.

If $|V(C_5)\cap O_n|=3$, that is $|V(C_5)\cap I_n|=2$, then we can assume that
$w_3=u_{2}$, and $w_4=v_2,w_5=v_0$, which implies $v_2v_0\in E(P_{n,k})$. Thus
$k=2$.

(ii)  Note that $|V(C_6)\cap O_n|\in \{2,3,4\}$ and $N_{I_n}(v_i)=\{v_{i+k},
v_{n-k+i}\}$ for $0\le i\le n-1$.

 If $|V(C_6)\cap O_n|=2$, then $w_3=v_1$, $w_6=v_0$, $w_4=v_{k+1}$ (resp.
 $w_4=v_{n-k+1}$) and $w_5=v_{2k+1}$ (resp. $w_5=v_{n-2k+1}$), which implies
 $3k+1\equiv 0~(\mbox{mod~} n)$ (resp. $n-3k+1\equiv 0~(\mbox{mod~} n)$), i.e.,
 $k=\frac{n-1}{3}$ (resp. $k=\frac{n+1}{3}$).

 If $|V(C_6)\cap O_n|=3$, say $w_3=u_2$, then  $w_4=v_2$, and $w_5=v_{k+2}$
 (resp. $w_5=v_{n-k+2}$), which implies $2k+2\equiv 0~(\mbox{mod~} n)$ (resp.
 $n-2k+2\equiv 0~(\mbox{mod~} n)$), i.e.,  $k=\frac{n-2}{2}$.

 If $|V(C_6)\cap O_n|=4$, then we can assume that $w_3=u_2$ and $w_4=u_3$. Thus
 $w_5=v_{3}$, which implies $k=3$ as $w_6=v_0$.
\end{proof}

In this paper, we determine the exact anti-Ramsey number in $P_{n,k}$ for $C_d$,
where $1\leq k\leq \lfloor \frac{n-1}{2} \rfloor$ and $5\le d \le 6$. Our results
are as follows.

\begin{theorem}
\label{1.2}
For $1\leq k\leq \lfloor \frac{n-1}{2} \rfloor$, we have
\begin{eqnarray}
  Ar(P_{n,k},C_5)=\left\{
\begin{array}{ll}
   7,                                     &\mbox{if}~(n,k)=(3,1);
   \\          \nonumber
   13,                                    &\mbox{if}~(n,k)=(5,1);
   \\          \nonumber
   10,                                    &\mbox{if}~(n,k)=(5,2);
   \\          \nonumber
   22,                                    &\mbox{if}~(n,k)=(10,2);
   \\          \nonumber
   \lfloor \frac{7n}{3} \rfloor,          &\mbox{if}~n\geq 6, n\neq
   10~\mbox{and}~k=2,~\mbox{or}~k=\frac{n-1}{2}\ge 3;   \\    \nonumber
   \frac{14n}{5},                         &\mbox{if}~k\geq
   3~\mbox{and}~k\in\{\frac{n}{5},\frac{2n}{5}\};                     \\
   \nonumber
   \,3n,                                     &otherwise.
   \nonumber
   \end{array}\right.
\end{eqnarray}
\end{theorem}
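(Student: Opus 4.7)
My plan is to split into cases based on which $C_5$-structure appears in $P_{n,k}$, following Proposition~\ref{1.1}(i). The ``otherwise'' case is immediate: $P_{n,k}$ is $C_5$-free and therefore $Ar(P_{n,k},C_5)=|E(P_{n,k})|=3n$. The four small sporadic values $(n,k)\in\{(3,1),(5,1),(5,2),(10,2)\}$ I would handle by a direct enumeration of all $C_5$s in each graph together with an explicit optimal coloring; the key observation is that each of these graphs harbors ``extra'' $C_5$s beyond the generic cyclic family---the Petersen graph $P_{5,2}$ famously carries twelve $C_5$s, and $P_{10,2}$ has two additional pairwise edge-disjoint inner pentagons $v_0v_2v_4v_6v_8v_0$ and $v_1v_3v_5v_7v_9v_1$---forcing the anti-Ramsey number strictly below the generic $\lfloor 7n/3\rfloor$ value.

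The heart of the proof is the main family $k=2$ or $k=(n-1)/2\ge 3$ with $n\ne 10$. Using the case analysis from the proof of Proposition~\ref{1.1}, I first verify that $P_{n,k}$ has exactly $n$ cycles of length $5$, indexed cyclically as $C_5^{(0)},\ldots,C_5^{(n-1)}$, and record two structural facts: every edge of $P_{n,k}$ lies in at most two of the cycles $C_5^{(i)}$, and any two distinct cycles share at most one common edge. From these I extract a \emph{ratio lemma}: for any color class $A$, the number $K(A)$ of cycles $C_5^{(i)}$ containing at least two edges of $A$ satisfies $K(A)\le\tfrac{3}{2}(|A|-1)$. Indeed, counting (class-edge, killed-cycle) incidences gives $K(A)\le|A|$, which dominates $\tfrac{3}{2}(|A|-1)$ once $|A|\ge 3$; for $|A|=2$ the ``any two edges share at most one cycle'' fact gives $K(A)\le 1$.

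The upper bound follows by a double count: if an edge-coloring uses $c$ colors with classes $A_1,\ldots,A_c$, then $\sum_j(|A_j|-1)=3n-c$ and $\sum_j K(A_j)\ge n$, so $n\le\tfrac{3}{2}(3n-c)$ and hence $c\le\lfloor 7n/3\rfloor$. For the matching lower bound I construct an explicit coloring by triangle-tiling the cycle indices: writing $n=3a+b$ with $b\in\{0,1,2\}$, I identify $a$ pairwise disjoint triples of pairwise intersecting cycles and paint the three intersection-edges of each triple with a single fresh color, spending excess $2$ to kill $3$ cycles. For $k=2$ the triples are $\{C_5^{(3j)},C_5^{(3j+1)},C_5^{(3j+2)}\}$ with intersection edges $u_{3j+1}u_{3j+2},\,u_{3j+2}u_{3j+3},\,u_{3j+2}v_{3j+2}$; for $k=(n-1)/2$ the triples are $\{C_5^{(j)},C_5^{(j+1)},C_5^{(j+m)}\}$ with $m=(n+1)/2$, exploiting the congruence $2m\equiv 1\pmod n$ to secure the third pairwise intersection, and the three intersection edges then form the star at the inner vertex $v_{j+1}$. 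The $b$ leftover cycles are each killed by a size-$2$ class while all remaining edges receive distinct fresh colors, giving exactly $\lfloor 7n/3\rfloor$ colors.

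Finally, the case $k\ge 3$, $k\in\{n/5,2n/5\}$ is much easier: by Proposition~\ref{1.1}(i) the only $C_5$s are the $n/5$ pairwise edge-disjoint inner pentagons $v_iv_{i+k}v_{i+2k}v_{i+3k}v_{i+4k}v_i$, so each must contribute its own unit of excess, yielding $c\le 3n-n/5=14n/5$; the matching coloring reuses one color-pair per pentagon. The main obstacles I anticipate are proving the ratio lemma cleanly and verifying that the required triangle-triple tiling of $\{0,\ldots,n-1\}$ exists for every relevant $n$ in both the $k=2$ and $k=(n-1)/2$ cases (the latter is subtle because the triples $\{j,j+1,j+m\}$ are not consecutive in the index set and a case analysis on $n\pmod 3$ is needed to exhibit the tiling), together with the delicate treatment of $(n,k)=(10,2)$, where the counting forces $c\le 22$ because of the two extra inner pentagons and the matching lower bound requires mixing four size-three triangles, some including an inner pentagon such as $\{C_5^{(0)},C_5^{(2)},\text{InnerPent}_0\}$ with color class $\{v_0v_2,\,v_2v_4,\,u_2v_2\}$, to cover all twelve $C_5$s.
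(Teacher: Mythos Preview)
Your plan is correct and will go through, but it takes a somewhat different route from the paper, chiefly in how the upper bound for the main family is obtained.

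The paper does not argue via your ratio lemma. Instead it invokes an exact formula from \cite{Xu} (Lemma~\ref{2.1}): whenever the auxiliary hypergraph $H_{G,\Psi}$ happens to be a graph, one has $Ar_G(\Psi)=|E(G)|-|\Psi|+M(H_{G,\Psi})$, where $M$ is the maximum number of vertex-disjoint cycles. For $k=2$ (Lemma~\ref{3.5}) and $k=(n-1)/2$ (Lemma~\ref{3.7}) the paper writes down $H_{G,\Psi}$ explicitly, observes that it is a simple graph on $n$ vertices (with loops), and shows $M(H_{G,\Psi})=\lfloor n/3\rfloor$ by exhibiting $\lfloor n/3\rfloor$ pairwise disjoint triangles---the same triangles that underlie your triple-tiling. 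Your two structural facts (``each edge in at most two $C_5$s'' and ``any two $C_5$s share at most one edge'') are exactly the statement that $H_{G,\Psi}$ is a simple graph, and your triples $\{j,j+1,j+m\}$ for $k=(n-1)/2$ are literally the paper's triangles $C_j=w_jw_{j+(n-1)/2}w_{j+(n+1)/2}$ in $H_{G,\Psi}$, re-indexed. So the lower bounds coincide; the difference is that your upper bound is a self-contained double count, whereas the paper imports the stronger black-box identity from \cite{Xu}. Your argument is more elementary and avoids the dependency; the paper's is shorter once Lemma~\ref{2.1} is in hand and gives both bounds at once.

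For $(n,k)=(10,2)$ the paper again applies Lemma~\ref{2.1}: there are $12$ copies of $C_5$, the auxiliary graph has order $12$, and four disjoint triangles (two of which mix a generic pentagon with an inner pentagon, exactly as in your example) give $M=4$ and $Ar=30-12+4=22$. Your ratio-lemma computation $12\le\frac32(30-c)$ recovers the same upper bound. For the remaining sporadic cases $(3,1)$ and $(5,2)$ the paper does \emph{not} use direct enumeration: it builds the hypergraph $H_{G,\Psi}$ (now of rank $4$), reads off $r$ and $s$, and applies the cruder bound of Lemma~\ref{2.2}, $Ar_G(\Psi)\le |E(G)|-\lceil 2|\Psi|/(r+s)\rceil$; for $(5,2)$ it additionally uses the partition-property machinery of Lemma~\ref{2.4}. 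Your ``direct enumeration'' would also work, but note that for the Petersen graph each edge lies in four pentagons, so your ratio lemma genuinely does not apply there and some separate argument is required.
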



\begin{theorem}
\label{1.3}
For $n\ge 3$, we have
\begin{eqnarray}
  Ar(P_{n,1},C_6)=\left\{
\begin{array}{ll}
   7,                                     &\mbox{if}~n=3;
   \\          \nonumber
   9,                                     &\mbox{if}~n=4;
   \\          \nonumber
  14,                                    &\mbox{if}~n=6;
  \\          \nonumber
  \lfloor \frac{5n}{2} \rfloor,  &\mbox{if}~n\ge 5,~n\neq 6;
     \nonumber
   \end{array}\right.
\end{eqnarray}
\end{theorem}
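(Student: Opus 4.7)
\textbf{Proof proposal for Theorem~\ref{1.3}.}

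The plan is to first classify the $C_6$'s of $P_{n,1}$, then handle the main case $n=3$ or $n\ge 5$ with $n\ne 6$ via a uniform coverage argument, and treat $n\in\{4,6\}$ by separate analyses. For the classification, each spoke switches between $O_n$ and $I_n$, so a $C_6$ uses $0$, $2$, or $4$ spokes; four-spoke cycles would force four nonempty arcs summing to $2$ (impossible), and zero-spoke cycles lie entirely in the outer or inner $C_n$ and exist only when $n=6$. A two-spoke $C_6$ has endpoints $u_av_a, u_bv_b$ with outer/inner arcs of lengths $p_O+p_I=4$, and because any arc in $C_n$ between vertices at cyclic distance $d$ has length $d$ or $n-d$, a short case check shows that for $n=3$ or $n\ge 5$ only $(p_O,p_I)=(2,2)$ arises. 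Hence for $n=3$ or $n\ge 5,\ n\ne 6$, the $C_6$'s of $P_{n,1}$ are exactly the $n$ ``double squares''
\[
D_j=u_ju_{j+1}u_{j+2}v_{j+2}v_{j+1}v_j,\qquad j=0,1,\dots,n-1,
\]
and each edge of $P_{n,1}$ lies in exactly two of them.

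For the lower bound in this main case, I will assign a fresh color $c_i$ to each outer edge $u_iu_{i+1}$ and a fresh color $s_i$ to each spoke $u_iv_i$, and color the inner edge $v_iv_{i+1}$ by $c_i$ if $i$ is even and by a new color $d_i$ if $i$ is odd. This uses $n+n+\lfloor n/2\rfloor=\lfloor 5n/2\rfloor$ colors, and each $D_j$ contains outer and inner edges at positions $j$ and $j+1$, one of which is even, so the corresponding outer and inner edges share a color and $D_j$ is not rainbow.

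For the upper bound I will argue by contradiction: suppose $\phi$ uses $c=\lfloor 5n/2\rfloor+1$ colors and has no rainbow $C_6$. Let the color classes of size $\ge 2$ have sizes $s_1,\dots,s_\ell$; then $\sum_i s_i=3n-c+\ell$ and $\sum_i s_i\ge 2\ell$, which forces $\ell\le 3n-c$. Since every edge lies in exactly two double squares, $\sum_{j=0}^{n-1}|C_i\cap D_j|=2s_i$, so the number of $D_j$ with $|C_i\cap D_j|\ge 2$ (i.e., covered by $C_i$) is at most $s_i$. Because every $D_j$ must be covered,
\[
n\le\sum_{i=1}^\ell s_i=3n-c+\ell\le 2(3n-c),
\]
giving $c\le 5n/2$, which contradicts $c>5n/2$.

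The cases $n\in\{4,6\}$ demand separate treatment. For $n=6$, the outer and inner hexagons $H_O,H_I$ are additional $C_6$'s and outer/inner edges now lie in three cycles; a case analysis on $\ell$ (with refined per-class coverage bounds coming from $\sum_D|C_i\cap D|\le 3s_i$) shows that $c=15$ cannot cover all eight $C_6$'s, giving $Ar\le 14$, and four pairs such as $\{u_0u_1,u_1u_2\},\{u_2u_3,v_2v_3\},\{u_4u_5,v_4v_5\},\{v_5v_0,v_0v_1\}$ together cover all eight cycles to realise the $14$-color construction. For $n=4$, the small order introduces $16$ distinct $C_6$'s (coming from the various $(p_O,p_I)$ possibilities available in $C_4$), and the pair construction fails since three pairs can cover at most $12<16$ cycles; instead, a single color class of size four on $\{u_0u_1,v_0v_1,u_2u_3,v_2v_3\}$ meets every $C_6$ in at least two edges and realises $9$ colors, with the matching upper bound $Ar\le 9$ following from a direct enumeration over color-class size patterns summing to $10$. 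The main obstacle is these two small cases: verifying the complete enumeration of $C_6$'s (particularly the sixteen cycles for $n=4$) and identifying the non-obvious size-$4$ transversal or four-pair collection required for each matching construction.
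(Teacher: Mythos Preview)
Your main-case argument (for $n=3$ and $n\ge5$, $n\ne6$) is correct and is essentially an unpacked, elementary version of what the paper does. The paper builds the incidence hypergraph $H_{P_{n,1},\Psi}$, observes that it is a genuine graph (each edge of $P_{n,1}$ lies in exactly two $D_j$), and then invokes its Lemma~2.1, $Ar_G(\Psi)=|E(G)|-|\Psi|+M(H_{G,\Psi})$, with $M(H_{P_{n,1},\Psi})=\lfloor n/2\rfloor$. Your double-count $n\le\sum_i s_i\le 2(3n-c)$ is exactly the upper-bound half of that identity written out by hand, and your parity-based coloring is a different but valid witness for the lower bound (the paper reads its construction off the $\lfloor n/2\rfloor$ vertex-disjoint $2$-cycles of $H_{G,\Psi}$).

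For $n=4$ and $n=6$ your sketches are completable, but the paper's proofs are tidier and its constructions simpler than yours. For $n=4$ the paper colors all four spokes with one color (every $C_6$ uses two spokes), a cleaner witness than your hand-picked size-$4$ set; for the upper bound it applies Lemma~2.2 with $r=8$, $s=4$ to get $12-\lceil 32/12\rceil=9$ in one line, whereas your ``enumeration over size patterns'' will need precisely the fact $s=4$ (no two edges lie in more than four common $C_6$'s) to dispose of the two-pair case. For $n=6$ the paper colors $E(u_0)$ and $E(v_3)$ with two colors (every $C_6$ passes through $u_0$ or $v_3$) rather than your four pairs, and obtains the upper bound from $r=3$, $s=2$ via $18-\lceil 16/5\rceil=14$; your per-class coverage analysis likewise needs the observation that no two distinct edges of $P_{6,1}$ lie in three common $C_6$'s (equivalently $s=2$) to finish the $\ell=3$ subcase. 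So there is no genuine gap, but the hypergraph parameters $r,s$ package exactly the bookkeeping you would otherwise carry out ad hoc.
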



\begin{theorem}
\label{1.4}
For $n\ge 5$, we have
\begin{eqnarray}
  Ar(P_{n,2},C_6)=\left\{
\begin{array}{ll}
   11,                                    &\mbox{if}~n=5;
   \\          \nonumber
   14,                                    &\mbox{if}~n=6;
   \\          \nonumber
   17,                                    &\mbox{if}~n=7;
   \\          \nonumber
   34,                                    &\mbox{if}~n=12; \\          \nonumber
   3n,          &otherwise.
   \nonumber
   \end{array}\right.
\end{eqnarray}
\end{theorem}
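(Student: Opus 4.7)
The proof splits according to whether $P_{n,2}$ contains a copy of $C_6$.  By Proposition~\ref{1.1}(ii), $P_{n,k}$ contains $C_6$ exactly when $k\in\{1,3,\tfrac{n}{6},\tfrac{n-1}{3},\tfrac{n+1}{3},\tfrac{n-2}{2}\}$; fixing $k=2$, this forces $n\in\{5,6,7,12\}$.  Consequently for every other $n\ge 5$ the graph $P_{n,2}$ is $C_6$-free, so $Ar(P_{n,2},C_6)=|E(P_{n,2})|=3n$ and the ``otherwise'' line of the theorem is settled.  For each of the four exceptional values I would first enumerate \emph{all} $C_6$'s in $P_{n,2}$ by adapting the case analysis on $|V(C_6)\cap O_n|$ from the proof of Proposition~\ref{1.1}(ii): for $n=12$ the $C_6$'s are exactly the two vertex-disjoint inner cycles on even and odd indices; for $n=6$ they are the outer $C_6$ together with six cyclically rotated cycles of the form $u_iu_{i+1}u_{i+2}v_{i+2}v_{i+4}v_iu_i$; for $n=7$ they form a single cyclic family of seven cycles $u_iu_{i+1}v_{i+1}v_{i+3}v_{i+5}v_iu_i$; and for $n=5$ (the Petersen graph) there are ten cycles, split into two cyclic families of five corresponding to $|V(C_6)\cap O_5|\in\{2,4\}$ (using $n-k=3$).

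For the lower bound I would produce an explicit edge-coloring in each case.  The common recipe is to pick a small edge set $F$ together with a partition into monochromatic parts so that every catalogued $C_6$ contains one part in full, then give each edge outside $F$ its own fresh colour.  For $n=12$, two disjoint monochromatic pairs (one per inner cycle) yield $|E(P_{n,2})|-4+2=34$ colours.  For $n\in\{5,6,7\}$ a five-edge set coloured in a single colour can be chosen so that every $C_6$ contains at least two of its edges; this gives $|E(P_{n,2})|-5+1=3n-4$ colours, matching $11$, $14$, and $17$ respectively.

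The upper bound is the principal obstacle and needs case-by-case combinatorial bookkeeping.  Given any edge-coloring of $P_{n,2}$ with $c$ colours and no rainbow $C_6$, choose one representative edge per colour to form a subgraph $H$ with $|E(H)|=c$; since any $C_6$ inside $H$ would be rainbow, $H$ must be $C_6$-free.  The ``excess'' $|E(P_{n,2})|-c$ equals the number of non-representative edges; each such edge $e$ of colour $i$ must fail to form a $C_6$ of $P_{n,2}$ together with five edges of $H$ of pairwise distinct colours different from $i$, lest a rainbow $C_6$ appear.  For $n=12$ the analysis is short: with excess at most $1$, the sole multi-edge colour class can kill at most one of the two vertex-disjoint inner $C_6$'s, so the other is rainbow and $c\le 34$.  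For $n\in\{5,6,7\}$ one must show that any coloring with excess at most three leaves some catalogued $C_6$ rainbow, which requires tracking how the repeated-colour edges sit relative to the overlapping $C_6$'s via shared spokes, outer edges and inner edges.  I expect the Petersen case $n=5$ (ten interlocked $C_6$'s) and the case $n=6$ to be the hardest steps of the proof.
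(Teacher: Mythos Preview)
Your reduction to $n\in\{5,6,7,12\}$ via Proposition~\ref{1.1}(ii) and the ``otherwise'' line are correct and match the paper.  The lower-bound construction for $n=6$, however, cannot work as stated: no single five-edge monochromatic set meets every $C_6$ of $P_{6,2}$ in two edges.  Indeed the outer $6$-cycle $G^2$ forces at least two outer edges into any such set, and since each outer edge lies in exactly three $C_6$'s while each spoke or inner edge lies in only two, the total incidence of five edges with the seven $C_6$'s is at most $14$ only when at least four are outer; but then after subtracting four hits on $G^2$ only ten incidences remain for the six cycles $G^1_i$, which require twelve.  The paper uses two monochromatic stars $E(u_0)$ and $E(u_3)$ instead, which also saves four colours.  (Your five-spoke set for $n=5$ does work, and for $n=7$ five suitably chosen inner edges do as well.)

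The more significant divergence is in the upper bounds.  Your representative-subgraph-plus-excess plan is sound in principle, but note that the naive ``$H$ is $C_6$-free'' step alone is \emph{not} tight for $n=5$: removing $u_0u_1$, $u_2u_3$, $v_3v_0$ already kills all ten $C_6$'s of the Petersen graph, so there is a $12$-edge $C_6$-free subgraph and the extra tracking you mention is genuinely required.  The paper bypasses all of this casework through the hypergraph $H_{G,\Psi}$ of Section~2.  For $n=5$ one reads off $r=4$, $s=2$ and Lemma~\ref{2.2} gives $Ar\le 15-\lceil 20/6\rceil=11$ in one line; for $n=6,7$ one has $r=3$, $s=2$, and Lemma~\ref{2.3} yields $H_{G,\Psi}\in\mathscr{P}_{3,7}$, whence Lemma~\ref{2.4} gives $Ar\le 3n-4$; and $n=12$ is handled by the exact formula of Lemma~\ref{2.1} after stripping the edges lying in no $C_6$.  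So what you flag as ``the hardest steps'' are, in the paper's framework, short computations of $r$, $s$ and $|\Psi|$.
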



\begin{theorem}
\label{1.5}
For $n\ge 7$, we have
\begin{eqnarray}
  Ar(P_{n,3},C_6)=\left\{
\begin{array}{ll}
  17,                                    &\mbox{if}~n=8;
  \\          \nonumber
   22,                                    &\mbox{if}~n=10;
   \\          \nonumber
   42,                                    &\mbox{if}~n=18;
   \\          \nonumber
   \lfloor \frac{5n}{2} \rfloor,          &otherwise.
   \nonumber
   \end{array}\right.
\end{eqnarray}
\end{theorem}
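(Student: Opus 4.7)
The plan is to prove Theorem~\ref{1.5} in two layers: first the generic case $n \notin \{8, 10, 18\}$ where the answer is $\lfloor 5n/2 \rfloor$, then the three exceptional values. By Proposition~\ref{1.1}(ii), since $k = 3$ is itself always in the admissible list, $P_{n,3}$ contains the hexagons $H_i = u_i u_{i+1} u_{i+2} u_{i+3} v_{i+3} v_i u_i$ (three outer edges, two spokes, one inner edge) for each $i \in \{0, 1, \ldots, n-1\}$. The exceptional values correspond to $k = 3$ also coinciding with $n/6$, $(n-1)/3$, $(n+1)/3$, or $(n-2)/2$, introducing additional $C_6$-families: three disjoint inner $C_6$'s $v_j v_{j+3} v_{j+6} v_{j+9} v_{j+12} v_{j+15} v_j$ ($j = 0, 1, 2$) when $n = 18$; a $(1+2+3)$-cycle (one outer, two spokes, three inner edges) when $n = 10$; and two further families simultaneously when $n = 8$. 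Outside these values the $H_i$'s are the only $C_6$'s.

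For the generic case, the lower-bound construction assigns distinct colors to all $n$ spokes and all $n$ inner edges, and on the outer cycle identifies colors within the pairs $\{u_{2i}u_{2i+1}, u_{2i+1}u_{2i+2}\}$, absorbing the stray edge into a neighboring pair when $n$ is odd. This uses $2n + \lceil n/2 \rceil = \lfloor 5n/2 \rfloor$ colors, and every hexagon $H_i$ contains two same-colored outer edges among its three consecutive outer edges, so no $H_i$ is rainbow. For the matching upper bound, I would fix an edge-coloring $\phi$ with $r$ colors and no rainbow $C_6$ and show $r \leq \lfloor 5n/2 \rfloor$ via a ``coverage per saving'' argument. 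Let $n_c$ denote the size of color class $c$; the total number of savings is $\sum_c (n_c - 1) = 3n - r$. Every hexagon $H_i$ must contain two edges of some common color class $C$, i.e., $C$ covers $H_i$ in the sense that $|E(H_i) \cap C| \geq 2$, so the total coverage is $\geq n$. The key lemma is that any color class of size $k$ covers at most $2(k-1)$ hexagons; summing gives $n \leq 2(3n - r)$ and hence $r \leq \lfloor 5n/2 \rfloor$.

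The main obstacle is establishing this key lemma. For $k = 2$ the claim reduces to observing that no pair of edges lies in more than two hexagons simultaneously, which follows from the fact that $H_i$ and $H_j$ share two outer edges only when $|i - j| = 1$, a single outer edge when $|i - j| = 2$, a single spoke when $|i - j| = 3$, and otherwise no edges. For $k \geq 3$ the lemma requires a case analysis on how the $k$ edges distribute among outer, spoke, and inner positions; the extremal configurations (such as a size-$3$ class consisting of two adjacent outer edges and the spoke between their endpoints, which covers exactly $4 = 2(k-1)$ hexagons) attain the bound, while ``spread out'' configurations cover strictly fewer hexagons per saving. I expect the careful enumeration of extremal configurations to be the technical heart of the proof.

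For the three exceptional values, each additional $C_6$-family both lowers the extremal number and contributes new terms to the coverage budget. For $n = 18$, the three disjoint inner $C_6$'s each demand at least one covering color class among the inner edges, forcing three savings beyond the generic $\lceil n/2 \rceil = 9$ and giving total savings $\geq 12$, hence $r \leq 54 - 12 = 42$; the matching construction adds three inner-edge repetitions to the generic scheme. For $n = 10$ and $n = 8$ the analogous coverage computation using the $(1+2+3)$-cycles (respectively the two new families at $n = 8$) yields $22$ and $17$. I expect $n = 18$ to be the most involved because the three long inner cycles must be coordinated simultaneously with the eighteen outer hexagons, requiring the coverage lemma to be extended across two geometrically distinct $C_6$-families at once.
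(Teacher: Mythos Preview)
Your generic-case argument is correct and is essentially the paper's approach, phrased without the hypergraph abstraction. The key lemma needs no case analysis: for $k=2$ it is the fact that any two edges lie in at most two common $H_i$, and for $k\ge3$ the incidence bound $\lfloor 3k/2\rfloor\le 2(k-1)$ (each edge lies in at most three $H_i$) already suffices.

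The genuine gap is in the exceptional values $n=8$ and $n=10$. There the family $\Psi$ of all $C_6$'s has size $24$ and $20$ respectively, and every edge of $P_{n,3}$ lies in six (resp.\ four) members of $\Psi$; at $n=8$ a single pair of edges can lie in three common hexagons. Your ``$2(k-1)$'' coverage inequality therefore fails outright, and no analogous counting argument recovers the values $17$ or $22$. The paper instead builds an auxiliary graph $Q$ on the edge set of $P_{n,3}$, joining $e$ and $e'$ whenever they lie in the maximal number of common hexagons, and then rules out each possible shape of a tight partition by exploiting that $Q$ contains no $K_4$ (resp.\ no $K_4$ or $K_5$); this structural step is the real technical heart, and contrary to your prediction it makes $n=8$ by far the hardest case, not $n=18$. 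For $n=18$ your ``$2(k-1)$'' bound does hold, but on its own it forces only $11$ savings against $21$ hexagons rather than the required $12$; the ``$+3$'' in your $9+3$ heuristic is not free, and the paper extracts the last saving by showing that any colour class covering an inner $C_6$ falls at least one unit short of $2(k-1)$, since two inner edges of the same residue class share only that single inner hexagon.
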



\begin{theorem}
\label{1.6}
For $4\leq k\leq \lfloor \frac{n-1}{2} \rfloor$, we have
\begin{eqnarray}
  Ar(P_{n,k},C_6)=\left\{
\begin{array}{ll}
   \lfloor \frac{5n}{2} \rfloor,          &\mbox{if}~ k\in \{\frac{n-1}{3},
   \frac{n+1}{3}\};                                       \\          \nonumber
   \lfloor \frac{7n}{3} \rfloor,          &\mbox{if}~k=\frac{n-2}{2};
   \\          \nonumber
   \frac{17n}{6},                         &\mbox{if}~k=\frac{n}{6};
   \\          \nonumber
   \,3n,                                     &otherwise.
   \nonumber
   \end{array}\right.
\end{eqnarray}
\end{theorem}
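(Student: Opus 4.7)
The ``otherwise'' case is immediate: by Proposition~\ref{1.1}(ii), for $4\le k\le\lfloor(n-1)/2\rfloor$ with $k\notin\{n/6,(n-1)/3,(n+1)/3,(n-2)/2\}$, the graph $P_{n,k}$ is $C_6$-free, so $Ar(P_{n,k},C_6)=|E(P_{n,k})|=3n$. For each of the three remaining cases I would first read off from the proof of Proposition~\ref{1.1}(ii) a complete description of all $C_6$'s in $P_{n,k}$, then give matching lower- and upper-bound arguments.

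For the lower bounds I would use three explicit constructions. When $k=n/6$, one has $\gcd(n,k)=k$, so the inner rim decomposes into $n/6$ pairwise vertex-disjoint $6$-cycles, and these are the only $C_6$'s in $P_{n,k}$ (the other alternatives of Proposition~\ref{1.1}(ii) are ruled out by $k\ge 4$); colouring outer-rim and spoke edges with $2n$ distinct colors and each inner $6$-cycle with $5$ distinct colors yields $2n+5(n/6)=17n/6$. When $k\in\{(n-1)/3,(n+1)/3\}$, $\gcd(k,n)=1$ so the inner rim is a single Hamilton cycle, and every $C_6$ in $P_{n,k}$ uses one outer edge, two spokes and three consecutive edges on this Hamilton cycle; colouring outer-rim and spokes with $2n$ fresh colors and the inner Hamilton cycle cyclically by the pattern $\mbox{AABBCC}\ldots$ (lengthening the last block to three equal-coloured edges when $n$ is odd) gives $\lfloor n/2\rfloor$ inner colors with no three consecutive rainbow, totalling $\lfloor 5n/2\rfloor$. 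When $k=(n-2)/2$, every $C_6$ has the form $u_iu_{i+1}u_{i+2}v_{i+2}v_{i+2+k}v_i$; the crucial observation is that the three edges $\{u_iu_{i+1}, u_{i+1}u_{i+2}, u_{i+1}v_{i+1}\}$ sit pairwise in three \emph{distinct} $C_6$'s (namely $C_6^{(i-1)}, C_6^{(i)}, C_6^{(i+1)}$), so assigning a common colour to such a triple kills three consecutive $C_6$'s at a cost of only two repetitions. Packing $\lfloor n/3\rfloor$ disjoint such triples around the outer rim, plus at most two extra monochromatic pairs to cover the leftover $C_6$'s, uses $3n-\lceil 2n/3\rceil=\lfloor 7n/3\rfloor$ colors.

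For the upper bounds I would use the representative-graph strategy as a starting point: given any colouring with more than the claimed number of colors, pick one representative edge per color class to form a spanning subgraph $R\sb P_{n,k}$; a $C_6$ in $R$ is automatically rainbow in the original colouring. For $k=n/6$ this suffices immediately, because avoiding a $C_6$ in $R$ forces the deletion of at least one edge from each of the $n/6$ disjoint inner $6$-cycles, yielding $|E(R)|\le 17n/6$. For $k\in\{(n\pm1)/3\}$ and $k=(n-2)/2$ the pure representative bound is strictly larger than the claimed answer, so a sharper argument is required: I would consider a second edge per color class, together with a local count of how many color classes can meet each $C_6$ in at most one edge, and a case analysis on how any surplus classes are forced to distribute across outer, spoke and inner layers. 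The main obstacle is $k=(n-2)/2$, whose extremal construction uses genuinely two-layer triples and so defeats any clean layer-by-layer bound; here the case analysis on triples of consecutive $C_6$'s is unavoidable. A secondary subtlety is the parity-of-$n$ handling in the $(n\pm1)/3$ construction, where the cyclic $\mbox{AABBCC}\ldots$ pattern must be adjusted at the wraparound so that every triple of three consecutive inner edges — including the one straddling the wraparound — contains a repeated colour.
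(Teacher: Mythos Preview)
Your lower-bound constructions are correct and essentially coincide with the paper's (your $\lfloor n/3\rfloor$ monochromatic triples in the $k=(n-2)/2$ case are exactly the vertex-disjoint triangles the paper finds in the auxiliary graph $H_{G,\Psi}$, and your inner-rim $\mbox{AABB}\ldots$ pattern matches Lemma~\ref{3.20}). The $k=n/6$ upper bound is also fine.

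The genuine gap is in the upper bounds for $k\in\{(n-1)/3,(n+1)/3\}$ and $k=(n-2)/2$. You correctly observe that the crude representative-subgraph bound (equivalently the Tur\'an-type bound) is too weak, but you then only gesture at ``a second edge per color class'' and ``a case analysis on triples of consecutive $C_6$'s'' without carrying anything out. As written this is not a proof, and your claim that such case analysis ``is unavoidable'' is in fact wrong: the paper disposes of both upper bounds in one line each using the edge--$C_6$ incidence hypergraph $H_{G,\Psi}$ and Lemmas~\ref{2.1}--\ref{2.2}. The structural facts you are missing are:
\begin{itemize}
\item For $k=(n-2)/2$, \emph{every} edge of $P_{n,k}$ lies in exactly two $C_6$'s (outer edges in $G_i,G_{i+1}$; spokes in $G_{i-1},G_{i+1}$; inner edges in $G_{i-1},G_{i+n/2}$). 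Hence $H_{G,\Psi}$ is an honest multigraph on $n$ vertices, and Lemma~\ref{2.1} gives the \emph{exact} value $Ar=3n-n+M(H_{G,\Psi})$. Your own triples witness $M(H_{G,\Psi})=\lfloor n/3\rfloor$, so the upper bound follows immediately---no case analysis at all.
\item For $k\in\{(n\pm1)/3\}$, the key parameter is $s(H_{G,\Psi})=1$: any two edges of $P_{n,k}$ lie together in at most one $C_6$ (outer edges lie in a unique $C_6$; two inner edges share a $C_6$ only if adjacent on the inner Hamilton cycle, and then exactly one). With $r(H_{G,\Psi})=3$, Lemma~\ref{2.2} gives $Ar\le 3n-\lceil 2n/(3+1)\rceil=\lfloor 5n/2\rfloor$ directly.
\end{itemize}
So the route you outline is unnecessarily laborious; recognising that $H_{G,\Psi}$ has rank~$2$ in the first case, and $s=1$ in the second, is what turns the upper bound into a computation rather than a case split.
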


The rest of the paper is organized as follows. In Section $2$, we give some
notations and lemmas which will be used in the proof of main results. The exact values of $Ar(P_{n,k}, C_5)$ and $Ar(P_{n,k},
C_6)$ for $1\leq k\leq \lfloor \frac{n-1}{2} \rfloor$ are determined in Sections
$3$ and 4, respectively.

\section{Preliminaries}

A hypergraph is a pair $H=(V, \mathscr{F})$, where $V$ is a finite set of
vertices and $\mathscr{F}$ is a family of subsets of $V$ such that for every
$F\in \mathscr{F}$, $F\neq \emptyset$ and $V=\bigcup_{F\in \mathscr{F}}F$. If
$|F|=1$, we call $F$ a loop. The rank of $H$ is defined as $r(H)=\max_{F\in
\mathscr{F}}|F|$. If $r(H) = 2$, then $H$ is a graph. For every $v\in V$, denote
$F_H(v)=\{F\in \mathscr{F}|v\in F\}$. Let $s(H)=\max\{|F_i\cap F_j|:F_i,F_j\in
\mathscr{F}\}$.

Let $G$ be a simple graph with $E(G) = \{e_1,e_2,\ldots,e_m\}$, and let
$\Psi=\{G_1,\ldots,G_l\}$ with $E(G)=\cup^l_{i=1}E(G_i)$, where $G_i$ is a
subgraph of $G$ for $1\leq i\leq l$. A hypergraph $H_{G,\Psi}$ is constructed in
\cite{Xu} with $V(H_{G,\Psi})=\{x_1,x_2,\ldots,x_l\}$ and
$E(H_{G,\Psi})=\{F_1,F_2,\ldots,F_m\}$, where each $x_i$ corresponds to $G_i$ in
$\Psi$ ($1\le i\le  l$) and $x_i\in F_j$ if and only if $e_j\in E(G_i)$ ($1\le
j\le  m$).

Given a graph $G$ and $\Psi=\{G_1,\ldots,G_l\}$, where $G_i$ is a subgraph of $G$
for $1\leq i\leq l$. For an edge-coloring of $G$, $H\in \Psi$ is called {\em
rainbow-$\Psi$ graph} if the coloring restricted to $H$ is rainbow. Let
$Ar_G(\Psi)$ denote the maximum number of colors in a coloring of the edges of
$G$ such that there is no rainbow-$\Psi$ graph of $G$. Obviously, if
$\Psi=\{G_1,\ldots,G_l\}$ is the set of all copies of $H$ in $G$, then $Ar(G,
H)=Ar_G(\Psi)$.

The following two lemmas will be useful in the proof of our main results.

\begin{lemma} \cite{Xu}
\label{2.1}
Let $G=(V,E)$ be a simple graph. Let $\Psi=\{G_1,\ldots,G_l\}$ such that
$E(G)=\cup^l_{i=1}E(G_i)$, where $G_i$ is a subgraph of $G$ for $1\leq i\leq l$.
Suppose $H_{G,\Psi}$ is a graph. Then
\begin{align}
Ar_G(\Psi)=|E(G)|-|\Psi|+M(H_{G,\Psi}).\nonumber
\end{align}
\end{lemma}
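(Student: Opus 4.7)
The proof sets up a dictionary between edge-colorings of $G$ and ``locally non-rainbow'' edge-colorings of $H := H_{G,\Psi}$. Since $H$ is a graph by hypothesis, its edges are in bijection with the edges of $G$, and an edge-coloring of $G$ contains no rainbow-$\Psi$ graph iff for every $x_i \in V(H)$ the star at $x_i$ in $H$ receives two edges of the same color. Call such a coloring of $E(H)$ \emph{admissible}; then $Ar_G(\Psi)$ is the maximum number of colors used by any admissible coloring, and the target equality becomes $\max c = |E(H)| - |V(H)| + M(H)$.

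For the lower bound, I would fix a maximum family $\{C_1, \ldots, C_M\}$ of vertex-disjoint cycles in $H$ with $M = M(H)$ and put $S = \bigcup_p V(C_p)$. I would assign all edges of each $C_p$ a common fresh color. Each $x_i \in V(H) \setminus S$ has $\deg_H(x_i) \geq 2$ (otherwise $G_i$ has at most one edge and is trivially rainbow), and none of its incident edges lies on any $C_p$. At each such $x_i$ I would pick a pair of incident edges, coloring pairs with fresh colors and identifying classes whenever two pairs share an edge. Maximality of $M$ forces $H[V(H)\setminus S]$ to be acyclic and each connected component of $V(H)\setminus S$ to have at least two boundary edges to $S$, which lets me choose the pairs so that each merged ``cherry component'' $A_j$ has exactly $|V_j| + 1$ edges for the $|V_j|$ non-$S$ vertices it covers. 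Counting cycle savings $|S|-M$ plus cherry savings $|V(H)\setminus S|$ gives a total of exactly $|E(H)| - |V(H)| + M$ colors.

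For the upper bound, I take an admissible coloring with $c$ colors and, at each $x_i$, designate a monochromatic pair $P_i$ of incident edges. Let $F = \bigcup_i P_i \subseteq E(H)$. Since $\delta_F(x_i) \geq 2$ for all $i$, every connected component of $(V(H), F)$ contains a cycle; picking one cycle per component yields $r$ vertex-disjoint cycles in $H$, so $r \leq M(H)$. The key sublemma I need is: in any connected graph $K$ with $\delta(K) \geq 2$ carrying an admissible coloring, at most $|E(K)| - |V(K)| + 1$ colors are used. I would prove this by induction on $|E(K)|$: the base is a pure cycle, which must be monochromatic; the inductive step either smooths a degree-$2$ vertex (contracting its two forced-monochromatic incident edges preserves admissibility and drops $|V|$ and $|E|$ each by $1$) or, when $\delta(K) \geq 3$, deletes a carefully chosen non-bridge edge. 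Applying the sublemma to each of the $r$ components of $F$ and giving the $|E(H)| - |E(F)|$ remaining edges distinct colors yields $c \leq (|E(F)| - |V(H)| + r) + (|E(H)| - |E(F)|) = |E(H)| - |V(H)| + r \leq |E(H)| - |V(H)| + M(H)$. The main obstacle lies in the sublemma's inductive step when no degree-$2$ vertex exists; this requires either a spanning-tree charging argument or a delicate edge-deletion that keeps the star-non-rainbow property intact.
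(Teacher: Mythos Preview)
The paper does not prove Lemma~\ref{2.1}: it is quoted from~\cite{Xu} and used as a black box throughout. So there is no argument in this paper to compare yours against; I can only comment on your sketch on its own terms.

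Your overall architecture---translate to edge-colorings of $H=H_{G,\Psi}$ and prove matching upper and lower bounds in terms of $M(H)$---is the natural one, but both halves have real gaps beyond the one you flag.

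For the upper bound, the step ``each component $K_j$ of $F$ contains a cycle, hence $r\le M(H)$'' fails in the presence of loops. A vertex $x_i$ whose designated pair $P_{x_i}$ consists of two loops sits in a component of $F$ that has minimum degree $\ge 2$ (each loop contributes~$2$) yet contains no cycle of length $\ge 2$. This is not a fringe case: the paper applies Lemma~\ref{2.1} precisely to such $H$---e.g.\ in Lemma~\ref{3.2} the hypergraph $H_{P'_{5,1},\Psi'}$ is two isolated vertices carrying five loops each, with $M=0$. Your bound there would give $c\le |E|-|V|+r$ with $r=2>0=M$, which is strictly too weak. So even granting your sublemma, the argument does not close; you need a finer accounting that handles loop-only components separately, or a different organizing invariant.

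Independently, the sublemma itself is left open in the $\delta(K)\ge 3$ case, as you note. Smoothing handles degree-$2$ vertices cleanly, but locating a non-bridge edge whose deletion preserves admissibility at \emph{both} endpoints is genuinely delicate (a vertex whose color multiset is $\{a,a,b_1,\dots,b_{d-2}\}$ with the $b_j$ distinct has two ``essential'' edges that cannot be removed), and nothing you wrote shows such an edge always exists.

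For the lower bound, the count ``each merged cherry component $A_j$ has exactly $|V_j|+1$ edges'' holds when the cherries chain into a path, but not for arbitrary trees in $H[V\setminus S]$: if a component is a star, one cherry per leaf cannot be merged into a single $(|V_j|+1)$-edge block. A correct construction exists---for instance, process each tree of $H[V\setminus S]$ top-down from a root, giving each vertex a pair that reuses its parent edge, or first pick one ``private'' incident edge per vertex of $V\setminus S$ injectively (Hall's condition holds since $\deg_H\ge 2$) and then chain colors---but it needs to be written out rather than asserted.
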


\begin{lemma} \cite{Xu}
\label{2.2}
Let $G=(V,E)$ be a simple graph. Let $\Psi=\{G_1,\ldots,G_l\}$ such that
$E(G)=\cup^l_{i=1}E(G_i)$, where $G_i$ is a subgraph of $G$ for $1\leq i\leq l$.
Suppose $H_{G,\Psi}$ is a hypergraph with $r(H_{G,\Psi})=r$ and
$s(H_{G,\Psi})=s$, then
\begin{align}
Ar_G(\Psi)\leq |E(G)|-\left\lceil \frac{2l}{r+s} \right \rceil.     \nonumber
\end{align}
\end{lemma}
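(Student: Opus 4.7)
I would split the proof according to Proposition~\ref{1.1}(ii): for $4\le k\le\lfloor\tfrac{n-1}{2}\rfloor$, the graph $P_{n,k}$ contains a $C_6$ iff $k\in\{\tfrac{n}{6},\tfrac{n-1}{3},\tfrac{n+1}{3},\tfrac{n-2}{2}\}$, so the ``otherwise'' row is immediate: $Ar(P_{n,k},C_6)=|E(P_{n,k})|=3n$. For each of the three remaining values of $k$, the plan is to enumerate the copies of $C_6$ (reusing the case analysis of Proposition~\ref{1.1}(ii)) to form the family $\Psi$, analyse the hypergraph $H_{G,\Psi}$ through Lemma~\ref{2.1} or Lemma~\ref{2.2}, and match the hypergraph bound with an explicit edge-colouring.

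For $k=\tfrac{n-2}{2}$, the $C_6$'s are $C_6^{(i)}=u_iu_{i+1}u_{i+2}v_{i+2}v_{i-k}v_iu_i$, and an incidence count shows that every edge of $P_{n,k}$ lies in exactly two of them; hence $H_{G,\Psi}$ is a $6$-regular circulant on $\mathbb{Z}_n$ with offsets $\{1,2,\tfrac{n+2}{2}\}$. Lemma~\ref{2.1} then gives $Ar(P_{n,k},C_6)=2n+M(H_{G,\Psi})$, and since the triples $\{i,i+1,i+2\}$ are triangles furnishing $\lfloor n/3\rfloor$ pairwise vertex-disjoint triangles (optimal because $|V(H_{G,\Psi})|=n$ and cycles use at least three vertices), this yields $\lfloor 7n/3\rfloor$. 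For $k=\tfrac{n}{6}$ every $C_6$ lies inside the inner rim, which decomposes (since $\gcd(k,n)=k$) into $n/6$ vertex-disjoint $6$-cycles, while the $2n$ outer rim and spoke edges lie in no $C_6$; giving each of these $2n$ edges a fresh colour and using $Ar(C_6,C_6)=5$ on each inner $6$-cycle gives $2n+5(n/6)=\tfrac{17n}{6}$.

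The delicate case is $k\in\{\tfrac{n-1}{3},\tfrac{n+1}{3}\}$, where $C_6^{(i)}=u_iu_{i+1}v_{i+1}v_{i+1+k}v_{i+1+2k}v_iu_i$ and the outer rim, spoke, and inner rim edges lie in $1$, $2$, and $3$ cycles respectively, so $H_{G,\Psi}$ has rank $3$ and Lemma~\ref{2.2} yields only $3n-\lceil 2n/5\rceil$. The key structural observation is that $3k\equiv\mp1\pmod n$ forces $\gcd(k,n)=1$, so the inner rim is a single Hamiltonian cycle and the three inner rim edges of each $C_6^{(i)}$ appear as three consecutive edges of this cycle. For the lower bound, I would colour all $n$ outer rim edges and all $n$ spokes with $2n$ pairwise distinct colours, and colour the inner rim Hamiltonian cycle by partitioning it into $\lfloor n/2\rfloor$ blocks of consecutive pairs (one block absorbing a third edge when $n$ is odd). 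Then every triple of three consecutive inner rim edges meets some block twice, so no $C_6^{(i)}$ is rainbow and $2n+\lfloor n/2\rfloor=\lfloor 5n/2\rfloor$ colours are used.

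The main obstacle is the matching upper bound for this last case, since Lemma~\ref{2.2} is not tight. My plan is a direct ``cost per destroyed cycle'' argument. For a rainbow-$C_6$-free colouring let $w=\sum_C(|C|-1)=|E(G)|-(\text{number of colours})$ be the colour waste and let $d_C$ denote the number of $C_6^{(i)}$'s containing two edges of colour class $C$; each of the $n$ cycles is non-rainbow, so $\sum_C d_C\ge n$. Using that $|F_e\cap F_{e'}|\le 2$ for any two distinct edges (with equality only on pairs of consecutive inner rim edges), together with the fact that no three edges can simultaneously realise $|F_e\cap F_{e'}|=2$ on all three pairs, I would verify the efficiency inequality $d_C\le 2(|C|-1)$ for every colour class by a short case analysis; combining gives $n\le\sum_C d_C\le 2w$, so $w\ge\lceil n/2\rceil$, i.e.\ at most $3n-\lceil n/2\rceil=\lfloor 5n/2\rfloor$ colours are used.
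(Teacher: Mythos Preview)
Your proposal does not address the stated lemma at all. Lemma~\ref{2.2} is a general upper bound $Ar_G(\Psi)\le |E(G)|-\lceil 2l/(r+s)\rceil$ valid for an arbitrary simple graph $G$ and family $\Psi$; it is quoted from \cite{Xu} and the paper gives no proof of it. What you have written is instead a proof sketch of Theorem~\ref{1.6} (the values of $Ar(P_{n,k},C_6)$ for $k\ge 4$). Nothing in your text establishes, or even attempts to establish, the inequality of Lemma~\ref{2.2}; you only \emph{invoke} that inequality in one branch of your argument. So as a proof of the displayed statement this is simply off target.

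If your intention was actually Theorem~\ref{1.6}, then two remarks. First, the cases $k=\frac{n-2}{2}$ and $k=\frac{n}{6}$ match the paper's Lemmas~\ref{3.21} and~\ref{3.19} essentially verbatim. Second, for $k\in\{\frac{n-1}{3},\frac{n+1}{3}\}$ you correctly observe that two consecutive inner-rim hyperedges share two vertices, so $s(H_{G,\Psi})=2$ and Lemma~\ref{2.2} only gives $3n-\lceil 2n/5\rceil$; the paper's Lemma~\ref{3.20} asserts $s=1$ and applies Lemma~\ref{2.2} directly, which is in fact a slip on the paper's side. Your alternative route---bounding the number of cycles ``killed'' by each colour class via $d_C\le 2(|C|-1)$---is a legitimate workaround, but the claimed inequality needs a careful proof: the extremal configurations are exactly the blocks of consecutive inner-rim edges, and you must rule out that a class mixing inner-rim edges with spokes or outer-rim edges can exceed the bound. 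That case analysis is doable but is the real content of this branch, and you have only asserted it.
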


Let $H=(V,\mathscr{F})$ be a hypergraph with $|V(H)|=l$ and
$\mathscr{F}=\{F_1,F_2,\ldots,F_m\}$. Given an integer $h$ with $0\leq h\leq
m-1$. Let $\{\mathscr{F}_1,\mathscr{F}_2,\ldots,\mathscr{F}_{m-h}\}$ be an
$(m-h)$-partition of $\mathscr{F}$ with
$|\mathscr{F}_1|\geq |\mathscr{F}_{2}|\geq \cdots \geq |\mathscr{F}_{m-h}|\ge 1$.
For $1\leq i\leq m-h$, we denote
$
  L_i:=
   \{v\in F_t\cap F_j~|~F_t,F_j\in \mathscr{F}_i,~t\neq
   j\}~\mbox{if}~|\mathscr{F}_i|\geq 2$, and
   $\emptyset$ {otherwise}. Then $|L_i|$ is the number of vertices which appear
   simultaneously in at least two edges in $\mathscr{F}_i$ when
   $|\mathscr{F}_i|\geq 2$. If each vertex contained in the edges of
   $\mathscr{F}_i$ appears simultaneously in only two edges of
   $\mathscr{F}_i$, then $\mathscr{F}_i$ is called a {\em barrier}.
 Obviously, $|\bigcup_{i=1}^{m-h}L_i|\leq l$. If $|\bigcup_{i=1}^{m-h}L_i|<l$ for
 any $(m-h)$-partition of $\mathscr{F}$, then we say $H\in \mathscr{P}_{h,l}$ and
 call that $H$ satisfies the property $\mathscr{P}_{h,l}$. Clearly, if $H\notin \mathscr{P}_{h,l}$, then there exists a $(m-h)$-partition
$\{\mathscr{F}_1,\mathscr{F}_2,\ldots,\mathscr{F}_{m-h}\}$ of $E(H)$ such that $|\bigcup_{i=1}^{m-h}L_i|=l$, that is, $\bigcup_{i=1}^{m-h}L_i=V$.

Note that for each $1\leq i\leq m-h$,
\begin{eqnarray}
   |L_i|\leq \left\{
\begin{array}{ll}
   \lfloor \frac{r(H)|\mathscr{F}_i|}{2} \rfloor,
   &\mbox{if}~|\mathscr{F}_i|\geq 3              \\
   s(H),                                        &\mbox{if}~|\mathscr{F}_i|=2
   \\
   0,                                                        &\mbox{otherwise}.
   \end{array}\right.
\end{eqnarray} Hence, if $|\mathscr{F}_i|\geq 3$ and $r(H)$ is even, then the
equality in (1) holds if and only if $\mathscr{F}_i$ is a barrier.

\begin{lemma}
\label{2.3}
Let $H=(V,\mathscr{F})$ be a hypergraph with $|V(H)|\geq 7$ and
$\mathscr{F}=\{F_1,F_2,\ldots, F_m\}$ with $m\geq 4$. If $r(H)=3$ and $s(H)=2$,
then $H\in \mathscr{P}_{3,7}$.
\end{lemma}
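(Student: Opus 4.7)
The plan is to exploit the very restricted shape of an $(m-3)$-partition of $\mathscr{F}$. Since each block has size $\geq 1$ and the blocks' sizes sum to $m$, the ``excess'' $\sum_{i=1}^{m-3}(|\mathscr{F}_i|-1)$ equals exactly $3$. Therefore the multiset of block sizes must be, up to reordering, one of only three possibilities:
\begin{align*}
\text{(a) } (4,1,1,\ldots,1),\qquad \text{(b) } (3,2,1,\ldots,1),\qquad \text{(c) } (2,2,2,1,\ldots,1).
\end{align*}
This reduction to three cases is the crux of the argument; everything after it is a direct application of the estimate (1).

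In each case I would bound $\sum_{i=1}^{m-3}|L_i|$ using (1) with $r(H)=3$ and $s(H)=2$. Singleton blocks contribute $0$; a block of size $2$ contributes at most $s(H)=2$; a block of size $a_i\ge 3$ contributes at most $\lfloor 3a_i/2\rfloor$. Thus in type (a), $|L_1|\le \lfloor 12/2\rfloor=6$ and the rest are $0$; in type (b), $|L_1|\le\lfloor 9/2\rfloor=4$ and $|L_2|\le 2$; in type (c), three blocks each contribute at most $2$. In every case the total is at most $6$.

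Combining, for any $(m-3)$-partition we get
\begin{align*}
\Big|\bigcup_{i=1}^{m-3}L_i\Big|\;\le\;\sum_{i=1}^{m-3}|L_i|\;\le\;6\;<\;7\;\le\;|V(H)|,
\end{align*}
which is exactly the statement $H\in\mathscr{P}_{3,7}$.

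There is no real obstacle here: once one observes that the excess $m-(m-3)=3$ forces the three possible shapes of the partition, the rest is a routine plug-in of $r=3,s=2$ into inequality (1). The only thing to be careful about is handling the small-$m$ edge case $m=4$ (where the partition consists of a single block of size $4$), but this is subsumed by type (a) and gives the same bound $\le 6$.
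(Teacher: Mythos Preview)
Your proof is correct and follows essentially the same approach as the paper's: both hinge on the identity $\sum_{i=1}^{m-3}(|\mathscr{F}_i|-1)=3$ together with inequality (1) to conclude $\sum |L_i|\le 6<7$. The only cosmetic difference is that the paper avoids your three-way case split by observing the single uniform estimate $|L_i|\le 2(|\mathscr{F}_i|-1)$ (valid for all block sizes $1,2,3,4$ when $r=3,s=2$) and summing directly.
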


\begin{proof} Let  $\{\mathscr{F}_1,\mathscr{F}_2,\ldots,\mathscr{F}_{m-3}\}$ be
an $(m-3)$-partition of $\mathscr{F}$ with $|\mathscr{F}_1|\geq
|\mathscr{F}_{2}|\geq \cdots \geq |\mathscr{F}_{m-3}|\ge 1$. Then $2\le
|\mathscr{F}_1|\le 4$ and $\sum_{i=1}^{m-3}(|\mathscr{F}_i|-1)=3$. Since $r(H)=3$
and $s(H)=2$, we have $|L_i|\leq 2(|\mathscr{F}_i|-1)$ for $1\leq i\leq m-3$ by
$(1)$. So $|\bigcup_{i=1}^{m-h}L_i|\le \sum_{i=1}^{m-3}|L_i|\leq 2\sum_{i=1}^{m-3}(|\mathscr{F}_i|-1)=6<7$, and hence $H\in \mathscr{P}_{3,7}$.
\end{proof}

Let $G=(V,E)$ be a (hyper)graph. Let $\phi$ be an edge-coloring of $G$ with the
color set $K=\{c_1,c_2,\ldots,c_t\}$. Note that coloring here need not be proper.
Suppose $\phi^{-1}(c_i)\neq \emptyset$ for each $c_i\in K$. Let $E(c_i)=\{e\in
E(G)~|~\phi(e)=c_i\}$ and $D(c_i)=|E(c_i)|-1$ for $c_i\in K$. Set
$D(G)=\sum_{c_i\in K} D(c_i)$. Then $D(c_i)$ is the repeated number of the color
$c_i$ and $D(G)$ is the
repeated number of all colors in $G$. Obviously, $|K|=|E(G)|-D(G)$. In the
following, we will give a necessary and sufficient condition for the bounds of
$Ar_G(\Psi)$.

\begin{lemma}
\label{2.4}
Let $G=(V,E)$ be a simple graph with $|E(G)|=m$. Let $\Psi=\{G_1,\ldots,G_l\}$
such that $E(G)=\cup^l_{i=1}E(G_i)$, where $G_i$ is a subgraph of $G$ for $1\leq
i\leq l$. Suppose $h$ is an integer with $0\leq h\leq m-1$. Then $H_{G,\Psi}\in
\mathscr{P}_{h,l}$ if and only if $Ar_G(\Psi)\leq |E(G)|-(h+1)$.
\end{lemma}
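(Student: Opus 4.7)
The plan is to set up a direct translation between edge-colorings of $G$ and partitions of the hyperedge set of $H_{G,\Psi}$, and then match the condition ``no rainbow $G_i$'' with the covering condition that defines $\mathscr{P}_{h,l}$.

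First, I would observe that specifying an edge-coloring of $G$ using exactly $c$ colors is the same data as a partition of $E(G)=\{e_1,\ldots,e_m\}$ into $c$ nonempty color classes. Under the bijection $e_j\leftrightarrow F_j$ built into $H_{G,\Psi}$, such a coloring is precisely a $c$-partition $\mathscr{F}_1,\ldots,\mathscr{F}_c$ of $\mathscr{F}$. The subgraph $G_i$ (equivalently, the vertex $x_i$ of $H_{G,\Psi}$) is non-rainbow in $\phi$ if and only if two of its edges share a color, if and only if some class $\mathscr{F}_k$ contains distinct hyperedges $F_t,F_j$ with $x_i\in F_t\cap F_j$, if and only if $x_i\in L_k$ for some $k$. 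Hence $\phi$ forbids every rainbow $G_i$ precisely when $\bigcup_{k=1}^{c}L_k=V(H_{G,\Psi})$, that is, $|\bigcup_k L_k|=l$.

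Second, I would record a short monotonicity observation: merging two color classes of a coloring cannot turn a non-rainbow $G_i$ into a rainbow one, so if a coloring with $c+1$ colors already avoids every rainbow $G_i$, then so does the coloring obtained by merging any two of its classes, which uses $c$ colors. Consequently $Ar_G(\Psi)\ge c$ holds if and only if there is an edge-coloring of $G$ using \emph{exactly} $c$ colors with no rainbow $G_i$.

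Taking $c=m-h$ and combining these two steps yields the chain of equivalences: $H_{G,\Psi}\notin\mathscr{P}_{h,l}$ iff some $(m-h)$-partition of $\mathscr{F}$ satisfies $|\bigcup_k L_k|=l$, iff some coloring of $G$ with exactly $m-h$ colors avoids every rainbow $G_i$, iff $Ar_G(\Psi)\ge m-h$. The contrapositive is exactly the stated biconditional $H_{G,\Psi}\in\mathscr{P}_{h,l}\iff Ar_G(\Psi)\le |E(G)|-(h+1)$. I do not anticipate any substantive obstacle; the only point requiring care is that partitions and colorings correspond on the nose, which follows from the standing convention $\phi^{-1}(c_i)\neq\emptyset$ ensuring every color class is nonempty, and that the ``decreasing size'' ordering in the definition of $\mathscr{P}_{h,l}$ is merely a relabeling and imposes no real restriction.
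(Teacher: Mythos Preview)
Your proposal is correct and follows essentially the same route as the paper: both set up the bijection $e_j\leftrightarrow F_j$, observe that $G_i$ fails to be rainbow under a coloring iff $x_i\in L_k$ for some color class $\mathscr{F}_k$, and then identify ``no rainbow $G_i$ at all'' with $\bigcup_k L_k=V(H_{G,\Psi})$. The only difference is packaging: the paper argues each implication separately, while you phrase it as a single chain of equivalences via the contrapositive and make explicit the monotonicity step (merging color classes cannot create a rainbow) that the paper uses implicitly when passing from ``every $(m-h)$-coloring has a rainbow'' to $Ar_G(\Psi)\le m-h-1$.
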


\begin{proof} Suppose that $E(G)=\{e_1,\ldots,e_m\}$,
$V(H_{G,\Psi})=\{x_1,\ldots,x_l\}$ and $E(H_{G,\Psi})=\{F_1,F_2,\ldots,F_m\}$,
where each $x_i$ corresponds to $G_i$ in $\Psi$ for $1\leq i\leq l$ and each
$F_j$ corresponds to $e_j$ for $1\leq j\leq m$.

Let $\phi$ be an edge-coloring of $G$ with the color set
$K=\{c_1,c_2,\ldots,c_{m-h}\}$ such that $\phi^{-1}(c_i)\not=\emptyset$. Consider
the edge-coloring $\phi^*$ of $H_{G,\Psi}$: $\phi^*(F_j)=\phi(e_j)$ for $1\leq
j\leq m$. Then $D_{\phi}(G)=D_{\phi^*}(H_{G,\Psi})=h$. Denote
$\mathscr{F}_i=\{F\in E(H_{G,\Psi})~|~\phi^*(F)=c_i\}$, $1\le i\le m-h$. Then
$\{\mathscr{F}_1,\mathscr{F}_2,\ldots,\mathscr{F}_{m-h}\}$ is a $(m-h)$-partition
of $E(H_{G,\Psi})$.

If $H_{G,\Psi}\in \mathscr{P}_{h,l}$, then $|\bigcup_{i=1}^{m-h}L_i|<l$, and thus $\bigcup_{i=1}^{m-h}L_i\subset V(H_{G,\Psi})$. Let $x_p\in V(H_{G,\Psi})\setminus (\bigcup_{i=1}^{m-h}L_i)$. Then each edge of $F_{H_{G,\Psi}}(x_p)$ is colored
with distinct colors in $H_{G,\Psi}$, and then every edge of $E(G_p)$ is colored
with distinct colors in $G$. It follows that there exists a rainbow $G_p$ in any
edge-coloring of $G$ by using $m-h=|E(G)|-D_{\phi}(G)$ colors. So $Ar_G(\Psi)\leq
(|E(G)|-D_{\phi}(G))-1=|E(G)|-(h+1)$.

Conversely, we suppose that $H_{G,\Psi}\notin \mathscr{P}_{h,l}$. Then there
exists a $(m-h)$-partition
$\{\mathscr{F}'_1,\mathscr{F}'_2,\ldots,\mathscr{F}'_{m-h}\}$ of $E(H_{G,\Psi})$
such that $\bigcup_{i=1}^{m-h}L'_i=V(H_{G,\Psi})$. Let $\phi'$ be
an edge-coloring of $H_{G,\Psi}$ such that $\phi'^{-1}(c_i)=\mathscr{F}'_i$ for
$1\leq i\leq m-h$. Then $D_{\phi'}(H_{G,\Psi})=h$.

Now we give an edge-coloring $\phi''$ of $G$ as follows:
$\phi''(e_j)=\phi'(F_j)$, where
each $F_j$ corresponds to $e_j$ $(1\leq j\leq m)$. Then
$D_{\phi''}(G)=D_{\phi'}(H_{G,\Psi})=h$. Note that, for any $x_p\in
V(H_{G,\Psi})$, $x_p\in L'_i$ for some $i$ ($1\leq i\leq m-h$), and hence there are at least two edges of $F_{H_{G,\Psi}}(x_p)$ colored with
the same color in $H_{G,\Psi}$, which implies at least two edges in each $G_p$ are colored with the same color. So, $G$ contains
no rainbow-$\Psi$ graph of $G$.
Therefore, $Ar_G(\Psi)\geq |E(G)|-D_{\phi''}(G)=|E(G)|-h$.
\end{proof}

In order to obtain the bounds of $Ar_G(\Psi)$, we need to check that whether a
hypergraph $H_{G,\Psi}$ satisfies the property $\mathscr{P}_{h,l}$ or not by
Lemma \ref{2.4}. So we give the following algorithm.

\begin{algorithm}
\label{2.5}
Determine a hypergraph $H_{G,\Psi}$ with the property $\mathscr{P}_{h,l}$ or not.

\indent {\bf Input:} ~~A hypergraph $H_{G,\Psi}=(V,\mathscr{F})$ with $|V|=l$ and
$\mathscr{F}=\{F_1,F_2,\ldots,F_m\}$,

an integer $h$ with $0\leq h\leq m-1$.

\indent {\bf Output:} ~~$H_{G,\Psi}\in \mathscr{P}_{h,l}$ or $H_{G,\Psi}\notin
\mathscr{P}_{h,l}$.

1: set $Q:=\emptyset$

2: {\bf for} each decreasing positive integer sequence $(f_1,f_2,\ldots,f_{m-h})$
such that

$~~~~~~~~~~\sum_{i=1}^{m-h}f_i=m$ {\bf do}

3: ~~~~~append $(f_1,f_2,\ldots,f_{m-h})$ to $Q$

4: {\bf end for}

5: {\bf while} $Q$ is nonempty {\bf do}

6: ~~~~~consider a sequence $(f_1,f_2,\ldots,f_{m-h})$ of $Q$

7: ~~~~~{\bf for} each $(m-h)$-partition
$\{\mathscr{F}_1,\mathscr{F}_2,\ldots,\mathscr{F}_{m-h}\}$ of $\mathscr{F}$ such
that $|\mathscr{F}_t|=f_t$,

$~~~~~~~~~~~~~~1\leq t\leq m-h$ {\bf do}

8: ~~~~~~~~~set $i:=1$

9: ~~~~~~~~~{\bf for} $i<m-h+1$ {\bf do}

10:~~~~~~~~~~~~~~{\bf if} $f_i\geq 2$ {\bf then}

11:~~~~~~~~~~~~~~~~~~$L_i:=\{v\in F_t\cap F_j~|~F_t,F_j\in
\mathscr{F}_i~\mbox{with}~t\neq j\}$

12:~~~~~~~~~~~~~~{\bf else}

13:~~~~~~~~~~~~~~~~~~$L_i:=\emptyset$

14:~~~~~~~~~~~~~~{\bf end if}

15:~~~~~~~~~{\bf end for}

16:~~~~~~~~~{\bf if} $|\bigcup_{i=1}^{m-h}L_i|=l$ {\bf then}

17:~~~~~~~~~~~~return $H_{G,\Psi}\notin \mathscr{P}_{h,l}$

18:~~~~~~~~~{\bf break}

19:~~~~~~~~~{\bf end if}

20:~~~~~{\bf end for}

21:~~~~~remove $(f_1,f_2,\ldots,f_{m-h})$ from $Q$

22: {\bf end while}

23: return $H_{G,\Psi}\in \mathscr{P}_{h,l}$

\end{algorithm}


\section{Anti-Ramsey number $Ar(P_{n,k}, C_5)$}


In this section, we will determine the anti-Ramsey number $Ar(P_{n,k}, C_5)$ for
$1\leq i\leq \lfloor \frac{n-1}{2} \rfloor$. If $G$ contains no $C_5$, then
$Ar(P_{n,k}, C_5)=|E(P_{n,k})|=3n$. Thus by Proposition \ref{1.1}(i), we can
assume that $n\in\{3,5\}$ or $k\in \{2,\frac{n}{5},\frac{2n}{5},\frac{n-1}{2}\}$.


\begin{lemma}
\label{3.1}
$Ar(P_{3,1}, C_5)=7$.
\end{lemma}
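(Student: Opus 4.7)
The plan combines an explicit $7$-coloring for the lower bound with a short counting argument for the upper bound, exploiting that $P_{3,1}$ is the $3$-prism with only $9$ edges.

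First I would enumerate the copies of $C_5$ in $P_{3,1}$. Since $|V(P_{3,1})|=6$, each $C_5$ is a Hamilton cycle of $P_{3,1}-x$ for some vertex $x$. A vertex-by-vertex check (in each deletion $P_{3,1}-x$ the three degree-$2$ vertices force all of their incident edges) shows this Hamilton cycle is unique, so $P_{3,1}$ contains exactly $6$ copies of $C_5$, one per omitted vertex. A uniform structural observation, which I would verify by inspecting the list, is that every such $C_5$ contains exactly two of the three spokes $u_0v_0,u_1v_1,u_2v_2$.

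For the lower bound $Ar(P_{3,1},C_5)\ge 7$, I would color all three spokes with a single common color and assign the six rim edges six pairwise distinct colors. This uses $7$ colors in total, and since every $C_5$ contains two same-colored spokes, no $C_5$ is rainbow.

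For the upper bound $Ar(P_{3,1},C_5)\le 7$, I would show that every edge-coloring using $8$ colors forces a rainbow $C_5$. Since $|E(P_{3,1})|=9$, such a coloring has exactly one pair $\{e,f\}$ of edges sharing a color, while the other seven edges have distinct colors; a $C_5$ then fails to be rainbow iff it contains both $e$ and $f$. A short incidence tabulation from the enumeration shows each edge of $P_{3,1}$ lies in at most $4$ of the $6$ copies of $C_5$ (each spoke in $4$, each rim edge in $3$), so at most $4<6$ copies of $C_5$ contain both $e$ and $f$ and at least two copies are rainbow. Equivalently, the upper bound can be phrased via Lemma~\ref{2.4} using $s(H_{P_{3,1},\Psi})\le 4<6$, while the lower bound corresponds to the $7$-partition of $\mathscr{F}$ that groups the three spoke-hyperedges together. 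The only substantive work is this small finite enumeration of cycles and incidences; there is no genuine obstacle beyond bookkeeping.
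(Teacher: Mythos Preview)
Your proposal is correct. The lower bound construction (all spokes one color, rim edges distinct) is identical to the paper's. For the upper bound the paper takes a slightly different route: it sets up the hypergraph $H_{P_{3,1},\Psi}$, computes $r=4$ and (more precisely than you) $s=2$, and applies Lemma~\ref{2.2} to obtain $Ar(P_{3,1},C_5)\le 9-\lceil\tfrac{2\cdot 6}{4+2}\rceil=7$. Your argument instead exploits the very special circumstance that $|E|-7=2$, so an $8$-coloring has a single repeated pair $\{e,f\}$, and then uses only the bound $|F_e\cap F_f|\le r=4<6=|\Psi|$ (you never actually need $s$). This is the more elementary route and amounts to verifying $H_{P_{3,1},\Psi}\in\mathscr{P}_{1,6}$ directly; the paper's use of Lemma~\ref{2.2} is more systematic and is what drives the harder cases later, but for this particular lemma your shortcut is cleaner. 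One minor sharpening: you write $s(H_{P_{3,1},\Psi})\le 4$, which is fine for your purposes, but in fact $s=2$ as the paper notes, so your estimate is deliberately loose.
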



\begin{proof} Let $G^{1}_i:=u_iu_{i+1}v_{i+1}v_{i+2}u_{i+2}u_i,
G^{2}_i:=u_iu_{i+1}v_{i+1}v_{i+2}v_{i}u_i$ for $0\leq i\leq 2$. Then $\Psi=\{
G^{1}_i, G^{2}_i~|~0\leq i\leq 2\}$ is the set of all $C_5$ in $P_{3,1}$, and
then $Ar(P_{3,1}, C_5)=Ar_{P_{3,1}}(\Psi)$.

First we show that $Ar(P_{3,1}, C_5)\geq 7$. Let $\phi: E(P_{3,1})\rightarrow
\{1,2,\ldots,7\}$ be an edge-coloring of $P_{3,1}$ such that $\phi(u_iv_i)=1$ for
$0\leq i\leq 2$ and every other edges obtains distinct colorings from
$\{2,\ldots,7\}$ (see Fig.~1$(a)$). Note that each $C_5\in \Psi$ contains two
spokes $u_iv_i$ and $u_{i+1}v_{i+1}$ for some $0\le i\le 2$, and hence $P_{3,1}$
has no rainbow $C_5$ under the coloring $\phi$. So, $Ar(P_{3,1}, C_5)\geq 7$.

Next we show that $Ar(P_{3,1}, C_5)\leq 7$. Let
$V(H_{P_{3,1},\Psi})=\{x_i,y_i~|~0\leq i\leq 2\}$ and
$E(H_{P_{3,1},\Psi})=\{F_0,\ldots,F_{8}\}$, where $x_i$ and $y_i$ correspond to $G^{1}_i$ and $G^{2}_i$ in $\Psi$, respectively, each $F_i$ corresponds
to $u_iu_{i+1}$, $F_{i+3}$ corresponds to $u_iv_i$, and
$F_{i+6}$ corresponds to $v_iv_{i+1}$ for $0\leq i\leq 2$.
Note that $u_iu_{i+1}$ is contained in $G^{1}_i$, $G^{1}_{i+1}$ and
$G^{2}_{i}$, $u_iv_i$ is contained in $G^{1}_{i+1}$, $G^{1}_{i+2}$,
$G^{2}_{i}$ and $G^{2}_{i+2}$, $v_iv_{i+1}$ is contained in
$G^{1}_{i+2}$, $G^{2}_{i+1}$ and $G^{2}_{i+2}$ for $0\leq i\leq 2$. Thus
$F_i=\{x_i,x_{i+1},y_i\}$, $F_{i+3}=\{x_{i+1},x_{i+2},y_{i},y_{i+2}\}$,
$F_{i+6}=\{x_{i+2},y_{i+1},y_{i+2}\}$ for $0\leq i\leq 2$, and so
$r(H_{P_{3,1},\Psi})=4$ and $s(H_{P_{3,1},\Psi})=2$. By Lemma \ref{2.2}, we have
$Ar(P_{3,1}, C_5)=Ar_{P_{3,1}}(\Psi)\leq |E(P_{3,1})|-\lceil \frac{2|\Psi|}{4+2}
\rceil=9-2=7$.
\end{proof}


\begin{figure}[!htb]
\centering
{\includegraphics[height=0.35\textwidth]{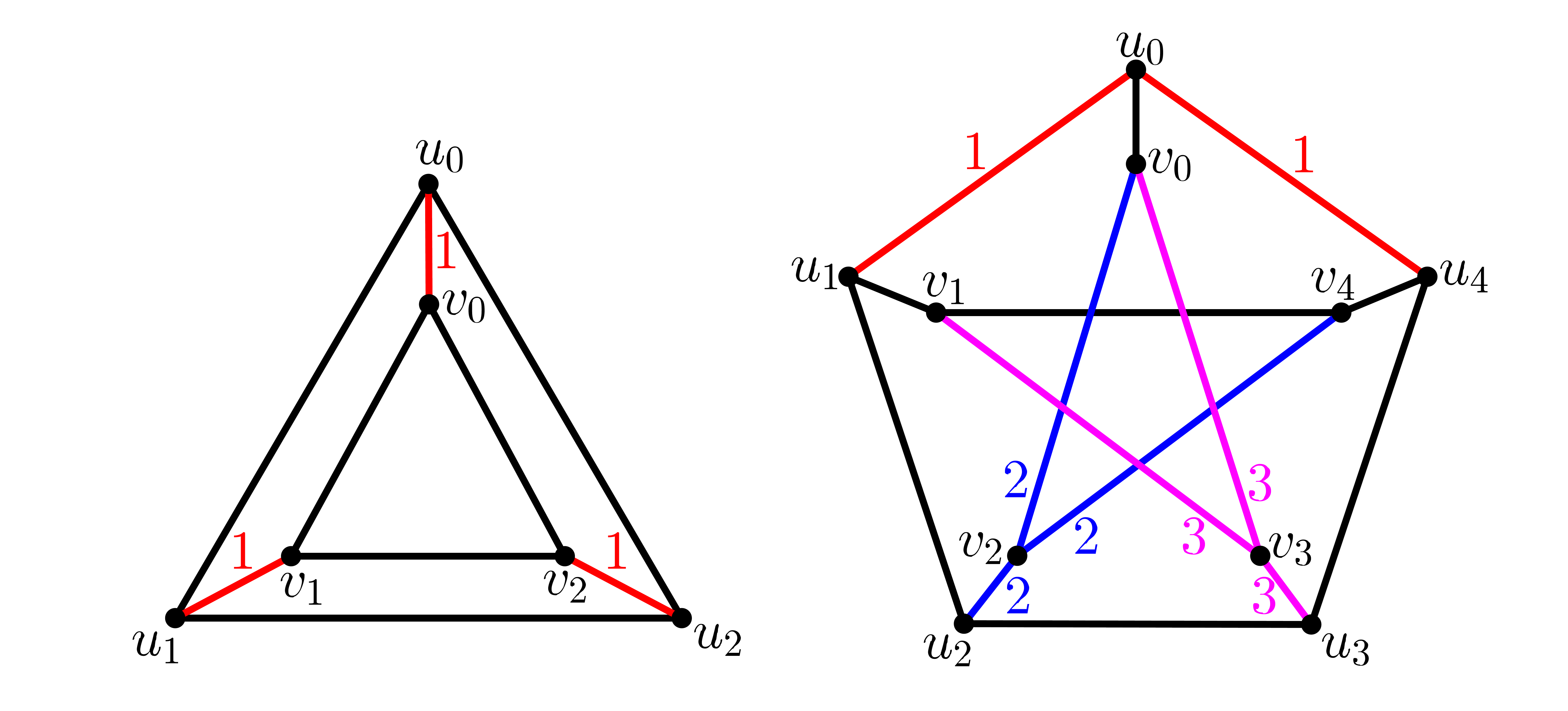}}

\vskip -.3cm

~$(a)$~$P_{3,1}$~~~~~~~~~~~~~~~~~~~~~~~~~~~~~$(b)$~$P_{5,2}$

\vskip .5cm

Fig.~1~$P_{3,1}$ and $P_{5,2}$ have no rainbow $C_5$ under the coloring $\phi$

\end{figure}


\begin{lemma}
\label{3.2}
$Ar(P_{5,1}, C_5)=13$.
\end{lemma}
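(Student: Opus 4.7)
My plan is to establish $Ar(P_{5,1}, C_5) = 13$ by matching lower and upper bounds via a direct edge-partition argument. The hypergraph framework of Lemma~\ref{2.2} does not apply to $P_{5,1}$ directly, since the copies of $C_5$ fail to cover all edges (the spokes lie in no $C_5$); so a more hands-on argument is needed here than in Lemma~\ref{3.1}.

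The preliminary step is to enumerate all copies of $C_5$ in $P_{5,1}$. By the case analysis in the proof of Proposition~\ref{1.1}, any $C_5$ in $P_{n,k}$ with $|V(C_5)\cap O_n|=2$ forces $k=(n-1)/2$ and any $C_5$ with $|V(C_5)\cap O_n|=3$ forces $k=2$. For $P_{5,1}$ we have $k=1$ while $(n-1)/2=2$, so both of these intermediate cases are excluded. Hence every $C_5$ in $P_{5,1}$ lies entirely in $O_5$ or entirely in $I_5$, giving exactly two copies: $C_O := u_0u_1u_2u_3u_4u_0$ and $C_I := v_0v_1v_2v_3v_4v_0$, so $\Psi=\{C_O,C_I\}$.

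For the lower bound $Ar(P_{5,1}, C_5)\ge 13$, I exhibit a coloring using $13$ colors with no rainbow $C_5$. Assign each of the five spokes $u_iv_i$ a distinct color; assign one color to both $u_0u_1$ and $u_1u_2$ together with three further distinct colors on the remaining outer edges; and similarly assign one color to two consecutive inner edges together with three further distinct colors on the remaining inner edges. This uses $5+4+4=13$ colors; since both $C_O$ and $C_I$ contain a repeated color, neither is rainbow, and there are no other $C_5$'s in $P_{5,1}$ by the enumeration above.

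For the upper bound $Ar(P_{5,1}, C_5)\le 13$, let $\phi$ be any edge-coloring of $P_{5,1}$ with no rainbow $C_5$, and write $c_O,c_I,c_S$ for the numbers of distinct colors appearing on the outer rim, the inner rim, and the five spokes, respectively. Avoiding a rainbow $C_O$ forces $c_O\le 4$, avoiding a rainbow $C_I$ forces $c_I\le 4$, and trivially $c_S\le 5$. Since the outer edges, inner edges, and spokes partition $E(P_{5,1})$, the total number of distinct colors is at most $c_O+c_I+c_S\le 13$. Combining the two bounds yields the claimed equality. The main conceptual step is the cycle enumeration; once $|\Psi|=2$ and the edges of $P_{5,1}$ split naturally into three parts, both bounds drop out by straightforward counting.
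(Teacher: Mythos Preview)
Your proof is correct. Both you and the paper rely on the same key observation: the only two copies of $C_5$ in $P_{5,1}$ are the outer rim $C_O$ and the inner rim $C_I$, and no spoke lies in either. From there, however, the routes diverge slightly. The paper removes the spokes to obtain $P'_{5,1}=P_{5,1}-\{u_iv_i:0\le i\le 4\}$, so that the two $C_5$'s now cover all remaining edges, and then applies the hypergraph formula of Lemma~\ref{2.1} (not Lemma~\ref{2.2}) to get $Ar(P'_{5,1},C_5)=|E(P'_{5,1})|-|\Psi'|+M(H_{P'_{5,1},\Psi'})=10-2+0=8$; adding back the five spokes gives $13$. Your argument bypasses the hypergraph machinery entirely with the elementary inequality $|K|\le c_O+c_I+c_S\le 4+4+5$. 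Your approach is shorter and self-contained; the paper's approach has the virtue of illustrating how Lemma~\ref{2.1} handles the obstruction you noted (edges not covered by $\Psi$) via the ``delete and add back'' trick, which is reused later in Lemmas~\ref{3.6}, \ref{3.144} and \ref{3.19}.
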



\begin{proof} Let $P'_{5,1}=P_{5,1}-E'$, where $E'=\{u_iv_i~|~0\leq i\leq 4\}$, and
let $G_1:=u_0u_1u_2u_3u_4u_0, G_2:=v_0v_1v_2v_3v_4v_0$. Then $\Psi'=\{G_1, G_2\}$
is the set of all copies of $C_5$ in $P'_{5,1}$, and then $Ar(P'_{5,1},
C_5)=Ar_{P'_{5,1}}(\Psi')$. Note that each edge in $E(G_i)$ is only contained in
$G_{i}$. Thus $H_{P'_{5,1},\Psi'}$ is a graph obtained from two isolated vertices
by adding five loops on each isolated vertex, and so $M(H_{P'_{5,1},\Psi'})=0$.
By Lemma \ref{2.1}, we have $Ar(P'_{5,1},
C_5)=Ar_{P'_{5,1}}(\Psi')=|E(P'_{5,1})|-|\Psi'|+M(H_{P'_{5,1},\Psi'})=8$. Since
there is no copy of $C_5$ in $P_{5,1}$ containing any edge of $E'$,
$Ar(P_{5,1},C_5)=|E'|+Ar(P'_{5,1},C_5)=13$.
\end{proof}


\begin{lemma}
\label{3.3}
$Ar(P_{5,2}, C_5)=10$.
\end{lemma}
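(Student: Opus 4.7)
(Sketch of plan.) The plan is to establish matching upper and lower bounds using the framework of Section~2; a direct application of Lemma~\ref{2.2} gives only $Ar(P_{5,2},C_5)\le 11$, so the sharpening to $10$ will have to come from Lemma~\ref{2.4}.

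First I would enumerate $\Psi$, the set of all copies of $C_5$ in $P_{5,2}$. Following the case analysis on $|V(C_5)\cap O_5|\in\{0,2,3,5\}$ used in the proof of Proposition~\ref{1.1}$(i)$, a routine listing yields $|\Psi|=12$: the outer rim $A=u_0u_1u_2u_3u_4u_0$, the inner rim $B=v_0v_2v_4v_1v_3v_0$, the five cycles $C_i=u_iu_{i+1}u_{i+2}v_{i+2}v_iu_i$ and the five cycles $D_i=u_iu_{i+1}v_{i+1}v_{i+3}v_iu_i$ for $0\le i\le 4$. Since $|E(P_{5,2})|=15$, a short count shows every edge of $P_{5,2}$ lies in exactly four members of $\Psi$, so $r(H_{P_{5,2},\Psi})=4$, and a quick pair-check gives $s(H_{P_{5,2},\Psi})=2$.

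For the lower bound $Ar(P_{5,2},C_5)\ge 10$ I would exhibit an explicit coloring $\phi$ with $10$ colors as indicated in Fig.~1$(b)$: assign color $1$ to the triple $\{u_0u_1,u_1u_2,u_1v_1\}$, assign colors $2,3,4$ to the pairs $\{v_0v_2,v_2v_4\}$, $\{u_2u_3,u_2v_2\}$, $\{u_3u_4,u_3v_3\}$ respectively, and give each of the six remaining edges its own distinct color. Verifying case by case that every one of the twelve cycles in $\Psi$ contains a repeated color is routine.

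The upper bound is the substantive part, and I would prove it through Lemma~\ref{2.4} with $h=4$ by showing $H_{P_{5,2},\Psi}\in\mathscr{P}_{4,12}$. Any $11$-partition of $E(H_{P_{5,2},\Psi})$ has class-size multiset among $(5,1^{10})$, $(4,2,1^9)$, $(3,3,1^9)$, $(3,2,2,1^8)$, $(2,2,2,2,1^7)$; plugging $r=4$, $s=2$ into inequality $(1)$ gives $\sum_i|L_i|\le 10<12$ for every distribution except $(3,3,1^9)$, which settles them at once. The main obstacle is the $(3,3,1^9)$ case, where in principle $|L_1|=|L_2|=6$ could force $|L_1\cup L_2|=12$; achieving $|L_i|=6$ requires each class of size three to be a \emph{barrier}, i.e.\ three $P_{5,2}$-edges whose hyperedges pairwise meet in exactly $s(H)=2$ vertices with empty triple intersection. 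I would classify the pair-types of edges whose hyperedges share two cycles (adjacent outer edges; outer-spoke pairs at a common outer vertex, of two kinds; adjacent inner edges; inner-spoke pairs at a common inner vertex, of two kinds) and check that the only barriers are the ten stars: the $u_j$-stars $\{u_{j-1}u_j,u_ju_{j+1},u_jv_j\}$ with $L=\{A,C_{j-2},C_{j-1},C_j,D_{j-1},D_j\}$ and the $v_j$-stars $\{u_jv_j,v_{j-2}v_j,v_jv_{j+2}\}$ with $L=\{B,C_{j-2},C_j,D_{j-3},D_{j-1},D_j\}$ for $0\le j\le 4$. Then two $u$-stars share $A$, two $v$-stars share $B$, and a $u_a$-star meets every $v_b$-star in some $C$-index because in $\mathbb{Z}_5$ the three-element interval $\{a-2,a-1,a\}$ cannot be disjoint from the two-element set $\{b-2,b\}$ of gap $2$ (its complement has gap $1$). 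Hence no two barriers are vertex-disjoint, so $|L_1\cup L_2|\le 11<12$ in the remaining case too; this puts $H_{P_{5,2},\Psi}\in\mathscr{P}_{4,12}$ and Lemma~\ref{2.4} delivers $Ar(P_{5,2},C_5)\le 15-5=10$.
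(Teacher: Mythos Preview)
Your proposal is correct and follows the same overall framework as the paper: both establish the lower bound by an explicit $10$-coloring and the upper bound via Lemma~\ref{2.4} by proving $H_{P_{5,2},\Psi}\in\mathscr{P}_{4,12}$, with the only nontrivial partition type being $(3,3,1^9)$. The details differ, however. For the lower bound, the paper's Fig.~1$(b)$ coloring is not the one you describe (it uses the classes $E(u_0)\setminus\{u_0v_0\}$, $E(v_2)$, $E(v_3)$); your four-class coloring based on the $u_1$-star and three adjacent pairs is a valid alternative, though your reference to Fig.~1$(b)$ is then inaccurate. For the $(3,3)$ case, you carry out a complete classification of all barriers (the ten vertex-stars) and show any two share a cycle of $\Psi$; the paper instead argues more directly: since $\mathscr{F}_1,\mathscr{F}_2$ are barriers with $L_1\cup L_2=V$ and $L_1\cap L_2=\emptyset$, one may take $F_0\in\mathscr{F}_1$ (using edge-transitivity of the Petersen graph), whence $x_0,y_0,y_4,z\in L_1$ forces four of the five hyperedges $F_{10},\ldots,F_{14}$ out of $\mathscr{F}_2$, so $w\notin L_2$, hence $w\in L_1$, which together with $z\in L_1$ forces two outer and two inner hyperedges into $\mathscr{F}_1$, contradicting $|\mathscr{F}_1|=3$. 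Your route is more structural and makes the obstruction fully explicit; the paper's is shorter and exploits the two ``global'' vertices $z,w$ of the hypergraph.
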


\begin{proof} Let $G^{1}_i:=u_iu_{i+1}v_{i+1}v_{i+3}v_iu_i,
G^{2}_i:=u_iu_{i+1}u_{i+2}v_{i+2}v_{i}u_i$ for $0\leq i\leq 4$, and let
$G^{3}:=u_0u_1u_2u_3u_4,$ $ G^{4}:=v_0v_2v_4v_1v_3v_0$. Then $\Psi=\{ G^{1}_i,
G^{2}_i, G^{3},G^{4}~|~0\leq i\leq 4\}$ is the set of all copies of $C_5$ in
$P_{5,2}$. Hence $Ar(P_{5,2}, C_5)=Ar_{P_{5,2}}(\Psi)$.

First we show that $Ar(P_{5,2}, C_5)\geq 10$. Let $\phi: E(P_{5,2})\rightarrow
\{1,2,\ldots,10\}$ be an edge-coloring of $P_{5,2}$ defined as follows:
\begin{eqnarray}
   \phi(e)=\left\{
\begin{array}{ll}
   1,                          &\mbox{if}~e\in E(u_0)\setminus \{u_0v_0\};
   \nonumber   \\
   2,                          &\mbox{if}~e\in E(v_2);
   \nonumber   \\
   3,                          &\mbox{if}~e\in E(v_3),
   \nonumber
   \end{array}\right.
\end{eqnarray}
and every other edges obtains distinct colorings from $\{4,\ldots,10\}$ (see
Fig.~1$(b)$). Note that each $C_5\in \Psi$ contains the edges
$\{u_0u_1,u_0u_4\}$, or two edges of  $E(v_i)$ ($i\in \{2,3\}$), and so $P_{5,2}$
has no rainbow $C_5$. Hence, $Ar(P_{5,2}, C_5)\geq 10$.

Next we show that $Ar(P_{5,2}, C_5)\leq 10$. Let
$V(H_{P_{5,2},\Psi})=\{x_i,y_i,z,w~|~0\leq i\leq 4\}$ and
$E(H_{P_{5,2},\Psi})=\{F_0,\ldots,F_{14}\}$, where $x_i$, $y_i$, $z$
and $w$ correspond to $G^{1}_i$, $G^{2}_i$, $G^{3}$ and $G^{4}$ in
$\Psi$, respectively, each $F_i$ corresponds to $u_iu_{i+1}$,
$F_{i+5}$ corresponds to $u_iv_i$, and $F_{i+10}$ corresponds to
$v_iv_{i+2}$ for $0\leq i\leq 4$. Note that each $u_iu_{i+1}$ is
contained in $G^{1}_i$, $G^{2}_i$, $G^{2}_{i+4}$ and $G^{3}$, each spoke $u_iv_i$
is contained in $G^{1}_i$, $G^{1}_{i+4}$, $G^{2}_{i}$ and $G^{2}_{i+3}$, and each
$v_iv_{i+2}$ is contained in $G^{1}_{i+2}$, $G^{1}_{i+4}$, $G^{2}_{i}$ and
$G^{4}$. Thus $F_i=\{x_i,y_i,y_{i+4},z\}$,
$F_{i+5}=\{x_i,x_{i+4},y_{i},y_{i+3}\}$ and $F_{i+10}=\{x_{i+2},x_{i+4},y_i,w\}$
for $0\leq i\leq 4$. It follows that $r(H_{P_{5,2},\Psi})=4$ and
$s(H_{P_{5,2},\Psi})=2$.

Now we will show that $H_{P_{5,2},\Psi}\in \mathscr{P}_{4,12}$. Suppose that $H_{P_{5,2},\Psi}\notin \mathscr{P}_{4,12}$. Then there exists a $11$-partition $\{\mathscr{F}_1,\mathscr{F}_2,\ldots,\mathscr{F}_{11}\}$ of $E(H_{P_{5,2},\Psi})$ with $ |\bigcup_{i=1}^{11}L_i|=12$. Assume that $|\mathscr{F}_1|\geq \cdots \geq |\mathscr{F}_{11}|\ge
1$. Then $2\le |\mathscr{F}_1|\le 5$ and $\sum_{i=1}^{11}(|\mathscr{F}_i|-1)=4$.
If $|\mathscr{F}_2|\leq
2$, then by $(1)$, $|\bigcup_{i=1}^{11}L_i|\le \sum_{i=1}^{11}|L_i|\leq
2|\mathscr{F}_1|+\sum_{i=2}^{11}2(|\mathscr{F}_i|-1)=2\sum_{i=1}^{11}(|\mathscr{F}_i|-1)+2=10<12$,
a contradiction. So $|\mathscr{F}_{2}|\geq 3$, and then $|\mathscr{F}_1|=
|\mathscr{F}_{2}|=3$. By $(1)$, $$12= |\bigcup_{i=1}^{11}L_i|\le \sum_{i=1}^{11}|L_i|\leq
2|\mathscr{F}_1|+2|\mathscr{F}_1|+\sum_{i=3}^{11}2(|\mathscr{F}_i|-1)=2\sum_{i=1}^{11}(|\mathscr{F}_i|-1)+4=12,$$
which implies $|L_i|=2|\mathscr{F}_i|=6$ for $i=1,2$, $L_1\cup
L_2=V(H_{P_{5,2},\Psi})$ and $L_1\cap L_2=\emptyset$. Hence, each $\mathscr{F}_i$
($i\in\{1,2\}$) is a barrier. Without loss of generality, we assume $F_0\in
\mathscr{F}_1$. Then $x_0,y_0,y_4,z\in L_1$, and then $\{F_{10}, F_{11},  F_{13},
F_{14}\}\cap \mathscr{F}_2=\emptyset$ as $y_0\in F_{10}, x_0\in F_{11}\cap
F_{13}, y_4\in F_{14}$. Note that $F(w)=\{F_i~|~{10}\le i\le {14}\}$, and so
$w\notin L_2$ as $|\mathscr{F}_2|=3$. Then $w\in L_1$, and then
$|\mathscr{F}_1|\ge 4$, a contradiction. Therefore, $H_{P_{5,2},\Psi}\in
\mathscr{P}_{4,12}$. By Lemma \ref{2.4}, $Ar(P_{5,2},C_5)=Ar_{P_{5,2}}(\Psi)\leq
|E(P_{5,2})|-(4+1)=10$.
\end{proof}


\begin{lemma}
\label{3.4}
$Ar(P_{10,2}, C_5)=22$.
\end{lemma}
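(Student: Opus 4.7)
The plan is to identify all copies of $C_5$ in $P_{10,2}$, show that the associated hypergraph $H_{P_{10,2},\Psi}$ happens to be a (simple) graph, and then apply Lemma \ref{2.1}.

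First I would enumerate $\Psi$. Since $n=10$ and $k=2=n/5$, $P_{10,2}$ simultaneously satisfies the ``$k=2$'' and ``$k=n/5$'' conditions of Proposition \ref{1.1}(i), and rerunning the case analysis in the proof of that proposition yields exactly twelve copies of $C_5$: the ten outer-heavy ones $G^2_i := u_i u_{i+1} u_{i+2} v_{i+2} v_i u_i$ for $0 \le i \le 9$, together with the two inner cycles $C^1 := v_0 v_2 v_4 v_6 v_8 v_0$ and $C^2 := v_1 v_3 v_5 v_7 v_9 v_1$ sitting in the two components of the inner rim. The $G^1_i$-type cycles that appeared in Lemma \ref{3.3} are now absent because the required edges $v_i v_{i+3}$ do not exist in $P_{10,2}$. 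Thus $|\Psi|=12$.

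Next I would build $H_{P_{10,2},\Psi}$ on vertex set $\{y_0,\dots,y_9,w_1,w_2\}$, where $y_i$ corresponds to $G^2_i$ and $w_j$ to $C^j$, and verify edge-by-edge that every edge of $P_{10,2}$ lies in exactly two members of $\Psi$: each outer edge $u_i u_{i+1}$ lies in $G^2_{i-1}$ and $G^2_i$; each spoke $u_i v_i$ in $G^2_{i-2}$ and $G^2_i$; and each inner edge $v_i v_{i+2}$ in $G^2_i$ together with $C^1$ or $C^2$ according to the parity of $i$. Hence $H_{P_{10,2},\Psi}$ is a simple graph on $12$ vertices with $30$ edges, and Lemma \ref{2.1} reduces the problem to computing $M := M(H_{P_{10,2},\Psi})$, since $Ar(P_{10,2},C_5) = 30 - 12 + M$.

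The last step is to show $M=4$. The upper bound $M \le \lfloor 12/3 \rfloor = 4$ is trivial. For the matching lower bound I would exhibit four vertex-disjoint triangles in $H_{P_{10,2},\Psi}$: the ``mixed'' triangles $\{w_1, y_0, y_2\}$ and $\{w_2, y_1, y_3\}$, each supported by two inner-edge hyperedges and one spoke hyperedge, together with the two consecutive ``$y$-only'' triangles $\{y_4, y_5, y_6\}$ and $\{y_7, y_8, y_9\}$, each of shape $y_i y_{i+1} y_{i+2}$ using two outer-edge hyperedges and one spoke hyperedge. Substituting $M=4$ yields $Ar(P_{10,2},C_5)=22$. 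The main obstacle is really just the enumeration step: the coincidence $k = n/5$ injects the extra pair $C^1,C^2$ that is absent in the generic $P_{n,2}$ case, so one has to be careful that no further $C_5$ slips through; once $\Psi$ and the hypergraph are pinned down, both the size-two check for every hyperedge and the disjointness of the four triangles reduce to routine finite verifications.
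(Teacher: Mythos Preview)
Your proposal is correct and follows essentially the same route as the paper: enumerate the twelve $C_5$'s (the ten $u_iu_{i+1}u_{i+2}v_{i+2}v_iu_i$ plus the two inner $5$-cycles), observe that every edge of $P_{10,2}$ lies in exactly two of them so that $H_{P_{10,2},\Psi}$ is a graph, and then invoke Lemma~\ref{2.1} with $M(H_{P_{10,2},\Psi})=4$. The only cosmetic difference is the particular set of four vertex-disjoint triangles exhibited; yours ($\{w_1,y_0,y_2\}$, $\{w_2,y_1,y_3\}$, $\{y_4,y_5,y_6\}$, $\{y_7,y_8,y_9\}$) is equally valid.
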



\begin{proof} Let $G^1_i:=u_iu_{i+1}u_{i+2}v_{i+2}v_iu_i$ for $0\leq i\leq 9$,
and let $G^2:=v_0v_2v_4v_6v_8v_0, G^3:=v_1v_3v_5v_7v_9v_1$. Then $\Psi=\{G^1_i,
G^2, G^3~|~0\leq i\leq 9\}$ is the set of all copies of $C_5$ in $P_{10,2}$, and
then $Ar(P_{10,2}, C_5)=Ar_{P_{10,2}}(\Psi)$.

Note that each edge $u_iu_{i+1}$ is contained in $G^1_i$ and $G^1_{i+9}$; each
spoke $u_iv_i$ is contained in $G^1_{i}$ and $G^1_{i+8}$; each $v_iv_{i+2}$ is
contained in $G^1_{i}$ and $G^2$ for $i\in \{0,2,4,6,8\}$; each edge $v_iv_{i+2}$
is contained in $G^1_{i}$ and $G^3$ for $i\in \{1,3,5,7,9\}$. Thus
$H_{P_{10,2},\Psi}$ is a simple graph of order $12$ (see Fig.~2). Since there
exist four vertex-disjoint cycles $w_0w_1w_2w_0$, $w_3w_4w_5w_3$,
$w_6w_8w_{11}w_6$ and $w_7w_9w_{10}w_7$ in $H_{P_{10,2},\Psi}$,
$M(H_{P_{10,2},\Psi})=4$. By Lemma \ref{2.1}, $Ar(P_{10,2},
C_5)=Ar_{P_{10,2}}(\Psi)=|E(P_{10,2})|-|\Psi|+M(H_{P_{10,2},\Psi})=22$.
\end{proof}

\begin{figure}[!htb]
\centering
{\includegraphics[height=0.35\textwidth]{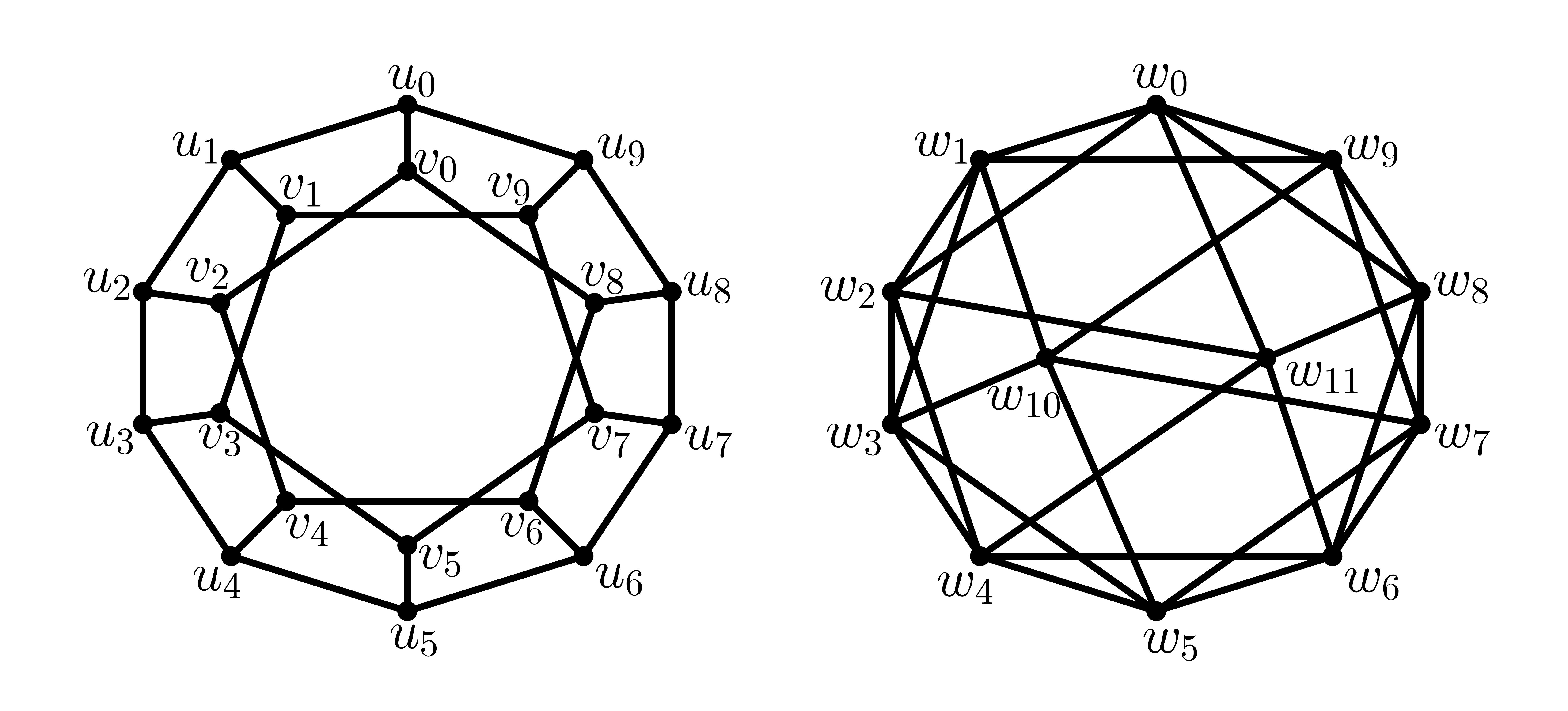}}

Fig.~2 ~ $P_{10,2}$ and $H_{P_{10,2},\Psi}$
\end{figure}


\begin{lemma}
\label{3.5}
$Ar(P_{n,2}, C_5)=\lfloor \frac{7n}{3} \rfloor$ for $n\geq 6$ and $n\neq 10$.
\end{lemma}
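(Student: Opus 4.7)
The plan is to reduce the problem to a direct application of Lemma \ref{2.1} by first pinning down the family of all $C_5$-copies and then computing $M(H_{P_{n,2},\Psi})$. By Proposition \ref{1.1}(i), since $n\ge 6$ and $n\ne 10$, the parameters $k=\frac{n}{5},\frac{2n}{5},\frac{n-1}{2}$ all fail, so the only copies of $C_5$ containing at least one spoke (and the only ones entirely in $O_n\cup I_n$ other than those already ruled out) are the cycles of the form $G^{1}_i:=u_iu_{i+1}u_{i+2}v_{i+2}v_iu_i$ for $0\le i\le n-1$. So $\Psi=\{G^{1}_i:0\le i\le n-1\}$ with $|\Psi|=n$ and $|E(P_{n,2})|=3n$.

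Next I would build $H:=H_{P_{n,2},\Psi}$ on vertices $\{x_0,\dots,x_{n-1}\}$ with $x_i\leftrightarrow G^1_i$. Reading off in which $G^1_j$ each edge of $P_{n,2}$ lies yields
\[
F_{u_iu_{i+1}}=\{x_{i-1},x_i\},\qquad F_{u_iv_i}=\{x_{i-2},x_i\},\qquad F_{v_iv_{i+2}}=\{x_i\},
\]
so every hyperedge has size $\le 2$, i.e.\ $H$ is a graph (with a loop at every vertex). A short check shows that for $n\ge 6$ the step-$1$ edges $\{x_{i-1},x_i\}$ and step-$2$ edges $\{x_{i-2},x_i\}$ are pairwise distinct and never coincide across types (that would force $n\mid 3$), so $H$ has no multi-edges, and hence every genuine cycle in $H$ has length $\ge 3$.

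It then remains to compute $M(H)$. Loops contribute nothing (they have length $1$), so $M(H)\le\lfloor n/3\rfloor$ purely on the vertex count. For the matching lower bound I would exhibit explicit vertex-disjoint triangles: each triple $\{x_{3j},x_{3j+1},x_{3j+2}\}$ spans a triangle (two step-$1$ edges and one step-$2$ edge), and taking $j=0,1,\dots,\lfloor n/3\rfloor-1$ gives $\lfloor n/3\rfloor$ pairwise disjoint ones. Hence $M(H)=\lfloor n/3\rfloor$. Finally Lemma \ref{2.1} yields
\[
Ar(P_{n,2},C_5)=|E(P_{n,2})|-|\Psi|+M(H)=3n-n+\lfloor n/3\rfloor=2n+\lfloor n/3\rfloor=\lfloor 7n/3\rfloor.
\]

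The only delicate step is the first one: making sure $\Psi$ really is the complete list of $C_5$-copies, which is precisely where the exclusion $n\ne 10$ is used (at $n=10$ one has $k=2=n/5$, introducing the two inner $5$-cycles handled separately in Lemma \ref{3.4}). Everything afterwards is mechanical, as both the upper and lower bounds on $M(H)$ reduce to picking out triangles in a circulant graph $C_n(\{1,2\})$.
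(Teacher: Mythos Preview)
Your argument is correct and follows essentially the same route as the paper's proof: identify $\Psi=\{u_iu_{i+1}u_{i+2}v_{i+2}v_iu_i:0\le i\le n-1\}$, observe that $H_{P_{n,2},\Psi}$ is the circulant graph $C_n(\{1,2\})$ with a loop at each vertex, compute $M(H)=\lfloor n/3\rfloor$ via the triangles on $\{x_{3j},x_{3j+1},x_{3j+2}\}$, and conclude by Lemma~\ref{2.1}. If anything, you are slightly more careful than the paper in justifying that $\Psi$ is the full list of $C_5$'s (via Proposition~\ref{1.1}(i)) and in ruling out multi-edges so that the upper bound $M(H)\le\lfloor n/3\rfloor$ actually holds.
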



\begin{proof} Let $G_i:=u_iu_{i+1}u_{i+2}v_{i+2}v_iu_i$ for $0\leq i\leq n-1$.
Then $\Psi=\{G_i~|~0\leq i\leq n-1\}$ is the set of all copies of $C_5$ in
$P_{n,2}$, and then $Ar(P_{n,2}, C_5)=Ar_{P_{n,2}}(\Psi)$. Note that each edge
$u_iu_{i+1}$ is contained in $G_i$ and $G_{i+n-1}$; $u_iv_i$ is
contained in $G_i$ and $G_{i+n-2}$; $v_iv_{i+2}$ is contained in $G_i$
for $0\leq i\leq n-1$. Thus $H_{P_{n,2},\Psi}$ is a graph obtained from
$C_n=w_0w_1\cdots w_{n-1}w_0$ by adding a loop on $w_i$ and edge
$w_iw_{i+2}$ for all $0\leq i\leq n-1$. Since there exist $\lfloor \frac{n}{3}
\rfloor$ vertex-disjoint cycles $w_{3i}w_{3i+1}w_{3i+2}w_{3i}$ in
$H_{P_{n,2},\Psi}$ for $0\leq i\leq \lfloor \frac{n}{3} \rfloor-1$, we have
$M(H_{P_{n,2},\Psi})=\lfloor \frac{n}{3} \rfloor$. By Lemma \ref{2.1},
$Ar(P_{n,2},
C_5)=Ar_{P_{n,2}}(\Psi)=|E(P_{n,2})|-|\Psi|+M(H_{P_{n,2},\Psi})=\lfloor
\frac{7n}{3} \rfloor$.
\end{proof}


\begin{lemma}
\label{3.6}
$Ar(P_{n,k}, C_5)=\frac{14n}{5}$ for $k\geq 3$ and
$k\in\{\frac{n}{5},\frac{2n}{5}\}$.
\end{lemma}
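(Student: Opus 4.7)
The plan is to mirror the strategy of Lemma~\ref{3.2}: strip away every edge that lies on no $C_5$, apply Lemma~\ref{2.1} to the residual graph, and add the stripped edges back.

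First I would identify $\Psi$, the set of all $C_5$'s in $P_{n,k}$. Under the hypothesis $k\ge 3$ and $k\in\{n/5,2n/5\}$ we have $n\ge 10$, and one checks that the remaining cases listed in Proposition~\ref{1.1}(i), namely $n\in\{3,5\}$, $k=2$, and $k=(n-1)/2$, are all inconsistent with our hypothesis. Hence every $C_5$ must lie entirely in the inner rim and so has the form $G_i:=v_iv_{i+k}v_{i+2k}v_{i+3k}v_{i+4k}v_i$. Since $5k\equiv 0\,(\mathrm{mod}\,n)$ while $jk\not\equiv 0\,(\mathrm{mod}\,n)$ for $1\le j\le 4$, the circulant inner rim on $I_n$ decomposes into $n/5$ vertex-disjoint pentagons, so $|\Psi|=n/5$ and the pentagons are edge-disjoint.

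Next I would set $E':=\{u_iu_{i+1},u_iv_i:0\le i\le n-1\}$, so that $|E'|=2n$, and put $P'_{n,k}:=P_{n,k}-E'$. Because no $C_5$ uses any edge of $E'$, exactly as in the proof of Lemma~\ref{3.2} one obtains
\[
Ar(P_{n,k},C_5)=|E'|+Ar(P'_{n,k},C_5).
\]
In $P'_{n,k}$ the edge set is partitioned into the $n/5$ pentagons $G_i$, each edge belonging to a unique $G_i$. Consequently the auxiliary graph $H_{P'_{n,k},\Psi}$ has $n/5$ vertices, each carrying five loops and no incident proper edges; thus $M(H_{P'_{n,k},\Psi})=0$. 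Lemma~\ref{2.1} then gives
\[
Ar(P'_{n,k},C_5)=|E(P'_{n,k})|-|\Psi|+M(H_{P'_{n,k},\Psi})=n-\tfrac{n}{5}+0=\tfrac{4n}{5},
\]
and adding back $|E'|=2n$ yields $Ar(P_{n,k},C_5)=2n+\tfrac{4n}{5}=\tfrac{14n}{5}$.

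The only nontrivial step is the opening structural claim that the pentagons $G_i$ exhaust $\Psi$ and are pairwise edge-disjoint; once that is in hand, everything else is a direct application of Lemma~\ref{2.1}, so I do not expect any serious obstacle.
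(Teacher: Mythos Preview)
Your proposal is correct and follows essentially the same approach as the paper: remove the outer-rim and spoke edges $E'$, observe that the remaining inner rim splits into $n/5$ edge-disjoint pentagons so that $H_{P'_{n,k},\Psi'}$ consists of $n/5$ isolated vertices with five loops each, apply Lemma~\ref{2.1} to get $Ar(P'_{n,k},C_5)=4n/5$, and add back $|E'|=2n$. Your write-up is in fact slightly more careful than the paper's, since you explicitly verify (via Proposition~\ref{1.1}(i)) that the hypotheses $k\ge 3$, $k\in\{n/5,2n/5\}$ exclude the cases $n\in\{3,5\}$, $k=2$, $k=(n-1)/2$, so that every $C_5$ indeed lies in the inner rim.
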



\begin{proof} Denote $P_{n,k}':=P_{n,k}-E'$, where $E'=\{u_iu_{i+1}, u_iv_i~|~0\leq
i\leq n-1\}$.
Let $G_i:=v_iv_{i+k}v_{i+2k}v_{i+3k}v_{i+4k}v_i$ for $0\leq i\leq \frac{n}{5}-1$.
Then $ \Psi'=
   \{G_i~|~0\leq i\leq \frac{n}{5}-1\}$ is the set of all copies of $C_5$ in
   $P'_{n,k}$ for $k\in\{\frac{n}{5},\frac{2n}{5}\}$, and then $Ar(P'_{n,k},
   C_5)=Ar_{P'_{n,k}}(\Psi')$. Note that each edge in $P_{n,k}'$ is only
   contained in some $G_{i}$. Thus $H_{P'_{n,k},\Psi'}$ is a graph obtained from
   $k$ isolated vertices by adding five loops on each isolated vertex. Then
   $M(H_{P'_{n,k},\Psi'})=0$. By Lemma \ref{2.1}, $Ar(P'_{n,k},
   C_5)=Ar_{P'_{n,k}}(\Psi')=|E(P'_{n,k})|-|\Psi'|+M(H_{P'_{n,k},\Psi'})=\frac{4n}{5}$
   for $k\in \{\frac{n}{5},\frac{2n}{5}\}$. Since there is no copy of $C_5$ in
   $P_{n,k}$ containing any edge of $E'$, we have
   $Ar(P_{n,k},C_5)=|E'|+Ar(P'_{n,k},C_5)=\frac{14n}{5}$.
\end{proof}


\begin{lemma}
\label{3.7}
$Ar(P_{n,k}, C_5)=\lfloor \frac{7n}{3} \rfloor$ for $k=\frac{n-1}{2}\ge 3$.
\end{lemma}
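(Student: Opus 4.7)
The plan is to apply Lemma~\ref{2.1} after enumerating the family $\Psi$ of all copies of $C_5$ in $P_{n,k}$. Since $k=\frac{n-1}{2}\ge 3$ rules out the alternatives $n\in\{3,5\}$, $k=2$, and $k\in\{\frac{n}{5},\frac{2n}{5}\}$, Proposition~\ref{1.1}(i) and its proof tell me that every $C_5$ in $P_{n,k}$ has $|V(C_5)\cap O_n|=2$ and, after substituting $\frac{n+1}{2}=k+1$, has the form
\[
G_i := u_iu_{i+1}v_{i+1}v_{i+k+1}v_iu_i,\qquad 0\le i\le n-1.
\]
Thus $|\Psi|=n$, and the task reduces to computing $M(H_{P_{n,k},\Psi})$.

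Next I describe $H:=H_{P_{n,k},\Psi}$. Using $n=2k+1$ (so that $v_{i+(n+1)/2}=v_{i+k+1}=v_{i-k}$), a direct count of which $G_i$ contains each edge shows that every outer edge $u_ju_{j+1}$ lies only in $G_j$ (a loop at $x_j$), every spoke $u_jv_j$ lies in exactly $\{G_j,G_{j-1}\}$, and every inner edge $v_jv_{j+k}$ lies in exactly $\{G_{j-1},G_{j+k}\}$. So $H$ is a graph (with $n$ loops) whose non-loop edge set is
\[
\{x_ix_{i+1}:0\le i\le n-1\}\ \cup\ \{x_ix_{i+k+1}:0\le i\le n-1\}.
\]

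The main work is to show $M(H)=\lfloor n/3\rfloor$. The upper bound is immediate from $|V(H)|=n$ and the fact that every cycle has at least three vertices. The lower bound, which is the main obstacle, asks for $\lfloor n/3\rfloor$ vertex-disjoint triangles; a naive choice along the original indices runs into awkward wrap-around conflicts since the triangle through $x_i,x_{i+1}$ is forced to use $x_{i+k+1}$, which is far away. The trick is to notice that $2(k+1)=2k+2\equiv 1\pmod n$, so $(k+1)^{-1}\equiv 2\pmod n$; relabeling $y_j:=x_{(k+1)j\bmod n}$ (a bijection since $\gcd(k+1,n)=1$) converts every edge $x_ix_{i+k+1}$ into $y_jy_{j+1}$ and every edge $x_ix_{i+1}$ into $y_jy_{j+2}$, so $H$ (ignoring loops) is isomorphic to the circulant $C_n(1,2)$. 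In $C_n(1,2)$ the triples $\{y_{3i},y_{3i+1},y_{3i+2}\}$ for $0\le i\le\lfloor n/3\rfloor-1$ are vertex-disjoint triangles, giving $M(H)\ge\lfloor n/3\rfloor$. Plugging into Lemma~\ref{2.1} then yields
\[
Ar(P_{n,k},C_5)=|E(P_{n,k})|-|\Psi|+M(H)=3n-n+\lfloor n/3\rfloor=\lfloor 7n/3\rfloor.
\]
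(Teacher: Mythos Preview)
Your proof is correct and follows the same overall approach as the paper: both enumerate $\Psi=\{G_i\}$, identify $H_{P_{n,k},\Psi}$ as the circulant $C_n(1,k{+}1)$ together with $n$ loops, and apply Lemma~\ref{2.1}, reducing the problem to showing $M(H)=\lfloor n/3\rfloor$.

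The only substantive difference is in the lower bound for $M(H)$. The paper observes the triangles $C_j=w_jw_{j+\frac{n-1}{2}}w_{j+\frac{n+1}{2}}w_j$ and then writes down explicit vertex-disjoint subfamilies in three separate cases according to $n\bmod 3$ (noting in particular that the case $n\equiv 2\pmod 3$ only occurs for $n\ge 11$). Your relabeling $y_j:=x_{(k+1)j}$, using $2(k+1)\equiv 1\pmod n$, is a genuine simplification: it turns $C_n(1,k{+}1)$ into $C_n(1,2)$, after which the triangles $\{y_{3i},y_{3i+1},y_{3i+2}\}$ give $\lfloor n/3\rfloor$ disjoint copies uniformly, with no case analysis needed. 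Both arguments reach the same conclusion; yours is just a cleaner packaging of the same combinatorics.
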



\begin{proof} Note that $k=\frac{n-1}{2}\ge 3$. Hence $n\ge 7$, moreover, $n\ge
11$ if $n\equiv 2\pmod 3$.

Let $G_i:=u_iu_{i+1}v_{i+1}v_{i+\frac{n+1}{2}}v_iu_i$ for $0\leq i\leq n-1$. Then
$\Psi=\{G_i~|~0\leq i\leq n-1\}$ is the set of all copies of $C_5$ in $P_{n,k}$,
and then $Ar(P_{n,k}, C_5)=Ar_{P_{n,k}}(\Psi)$.
Note that each $u_iu_{i+1}$ is contained in $G_i$, $u_iv_i$ is
contained in $G_i$ and $G_{i+n-1}$, $v_iv_{i+\frac{n-1}{2}}$ is
contained in $G_{i+\frac{n-1}{2}}$ and $G_{i+n-1}$. Thus $H_{P_{n,k},\Psi}$ is a
graph obtained from the cycle $w_0w_1\cdots w_{n-1}w_0$ by adding a loop on $w_i$ and an edge $w_{i+\frac{n-1}{2}}w_{i+n-1}$ for all $0\leq i\leq n-1$.
Then $C_j=w_jw_{j+\frac{n-1}{2}}w_{j+\frac{n+1}{2}}w_j$ is a $3$-cycle in
$H_{P_{n,k},\Psi}$ for $0\leq j\leq n-1$. Let $\mathscr{C}$ be the set of
vertex-disjoint 3-cycles in $H_{P_{n,k},\Psi}$. Then \begin{eqnarray}
  \mathscr{C}=\left\{
\begin{array}{ll}
   \{C_{3i}~|~0\leq i\leq \frac{n}{3}-1\},
   &\mbox{if}~n\equiv 0\pmod 3;                     \\          \nonumber
   \{C_{2+3i},C_{n-3i}~|~0\leq i\leq \frac{n-7}{6}\},
   &\mbox{if}~n\equiv 1\pmod 3;                     \\          \nonumber
   \{C_{0},C_{2+3i},C_{n-2-3i}~|~0\leq i\leq \frac{n-11}{6}\},  &otherwise.
   \nonumber
   \end{array}\right.
\end{eqnarray}
This implies that $M(H_{P_{n,k},\Psi})=\lfloor \frac{n}{3} \rfloor$. By Lemma
\ref{2.1}, we have
$Ar(P_{n,k},
C_5)=Ar_{P_{n,k}}(\Psi)=|E(P_{n,k})|-|\Psi|+M(H_{P_{n,k},\Psi})=\lfloor
\frac{7n}{3} \rfloor$.
\end{proof}

By Lemmas \ref{3.1}-\ref{3.7}, we complete the proof of Theorem \ref{1.2}.

\section{Anti-Ramsey number $Ar(P_{n,k}, C_6)$}

In this section, we will determine the anti-Ramsey number $Ar(P_{n,k}, C_6)$. If
$G$ contains no $C_6$, then $Ar(P_{n,k}, C_6)=|E(P_{n,k})|=3n$. Thus by
Proposition \ref{1.1}$(ii)$, it is only necessary to consider the anti-Ramsey
number $Ar(P_{n,k}, C_6)$ when $k\in
\{1,3,\frac{n}{6},\frac{n-1}{3},\frac{n+1}{3},\frac{n-2}{2}\}$. Moreover, if
$k=2$, then $n\in\{5,6,7,12\}$.

\subsection{$k=1$}

In this subsection, we focus on the case that $k=1$. Then $n\ge 3$.

\begin{lemma}
\label{3.8}
$Ar(P_{3,1}, C_6)=7$.
\end{lemma}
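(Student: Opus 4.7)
The plan is to treat $P_{3,1}$ as the triangular prism and enumerate all copies of $C_6$ explicitly, then apply Lemma \ref{2.1}. Since $|V(P_{3,1})|=6$, every $C_6$ is a Hamiltonian cycle. A standard parity check on crossings shows each such cycle uses exactly two spokes; once we fix two spokes $u_iv_i, u_jv_j$, the cycle is forced (outer path through the remaining $u_k$, inner path through the remaining $v_k$). So there are exactly $\binom{3}{2}=3$ copies of $C_6$, each determined by the spoke it omits. I will label them $H_0, H_1, H_2$, where $H_t$ is the Hamiltonian cycle avoiding the spoke $u_tv_t$, and note that $H_t$ also avoids the outer edge $u_iu_j$ and the inner edge $v_iv_j$ with $\{i,j\}=\{0,1,2\}\setminus\{t\}$.

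For the lower bound $Ar(P_{3,1},C_6)\ge 7$, I will color all three spokes with one common color and the remaining six edges with six distinct new colors, giving $7$ colors in total. Because each of the three $C_6$'s uses two of the three spokes, every $C_6$ contains a repeated color and is therefore not rainbow.

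For the upper bound, I will form the auxiliary hypergraph $H_{P_{3,1},\Psi}$ with $\Psi=\{H_0,H_1,H_2\}$. From the observation above, each edge of $P_{3,1}$ lies in exactly two of the three $C_6$'s, so every $F_j$ has size $2$ and $H_{P_{3,1},\Psi}$ is a genuine (multi)graph on the three vertices $x_0,x_1,x_2$. A direct count of which edge is missed by which cycle shows that between every pair of $x_i,x_j$ there are exactly three parallel edges (one outer, one spoke, one inner), so $H_{P_{3,1},\Psi}$ is the triangle $K_3$ with every edge tripled. Any vertex-disjoint cycle of length at least $2$ uses at least two of the three vertices, so $M(H_{P_{3,1},\Psi})=1$ (achieved by, e.g., a pair of parallel edges). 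Lemma \ref{2.1} then gives
\[
Ar(P_{3,1},C_6)=|E(P_{3,1})|-|\Psi|+M(H_{P_{3,1},\Psi})=9-3+1=7,
\]
matching the lower bound.

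There is no real obstacle here beyond the bookkeeping: the only things to verify carefully are that every $C_6$ in $P_{3,1}$ uses exactly two spokes (so the enumeration is complete with just three cycles), and the incidence tabulation that yields the triple-edge structure of $H_{P_{3,1},\Psi}$. Both are finite checks on a nine-edge graph, so the argument should be completely elementary once the framework of Lemma \ref{2.1} is invoked.
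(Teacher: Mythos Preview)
Your proposal is correct and follows essentially the same approach as the paper: enumerate the three Hamiltonian $6$-cycles, observe that the auxiliary structure $H_{P_{3,1},\Psi}$ is the triangle with each edge tripled so that $M(H_{P_{3,1},\Psi})=1$, and apply Lemma~\ref{2.1} to get $9-3+1=7$. The only cosmetic differences are your indexing of the cycles by the omitted spoke (the paper uses a shifted labelling $G_i$) and your explicit lower-bound colouring, which is harmless but redundant since Lemma~\ref{2.1} already yields an equality.
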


\begin{proof} Let $G_i:=u_iu_{i+1}u_{i+2}v_{i+2}v_{i+1}v_iu_i$ for $0\leq i\leq
2$. Then $\Psi=\{G_0,G_1,G_2\}$ is the set of all copies of $C_6$ in $P_{3,1}$,
and then $Ar(P_{3,1}, C_6)=Ar_{P_{3,1}}(\Psi)$. Note that each $u_iu_{i+1}$ is
contained in $G_i$ and $G_{i+2}$, $u_iv_i$ is contained in $G_{i}$ and
$G_{i+1}$, and $v_iv_{i+1}$ is contained in $G_i$ and $G_{i+2}$ for $0\leq
i\leq 2$. Thus $H_{P_{3,1},\Psi}$ is a graph obtained from the 3-cycle
$w_0w_1w_2w_0$ by adding two parallel edges on each edge of $C_3$. Then
$M(H_{P_{3,1},\Psi})=1$. By Lemma \ref{2.1},
$Ar(P_{3,1},C_6)=Ar_{P_{3,1}}(\Psi)=|E(P_{3,1})|-|\Psi|+M(H_{P_{3,1},\Psi})=7$.
\end{proof}


\begin{figure}[!htb]
\centering
{\includegraphics[height=0.35\textwidth]{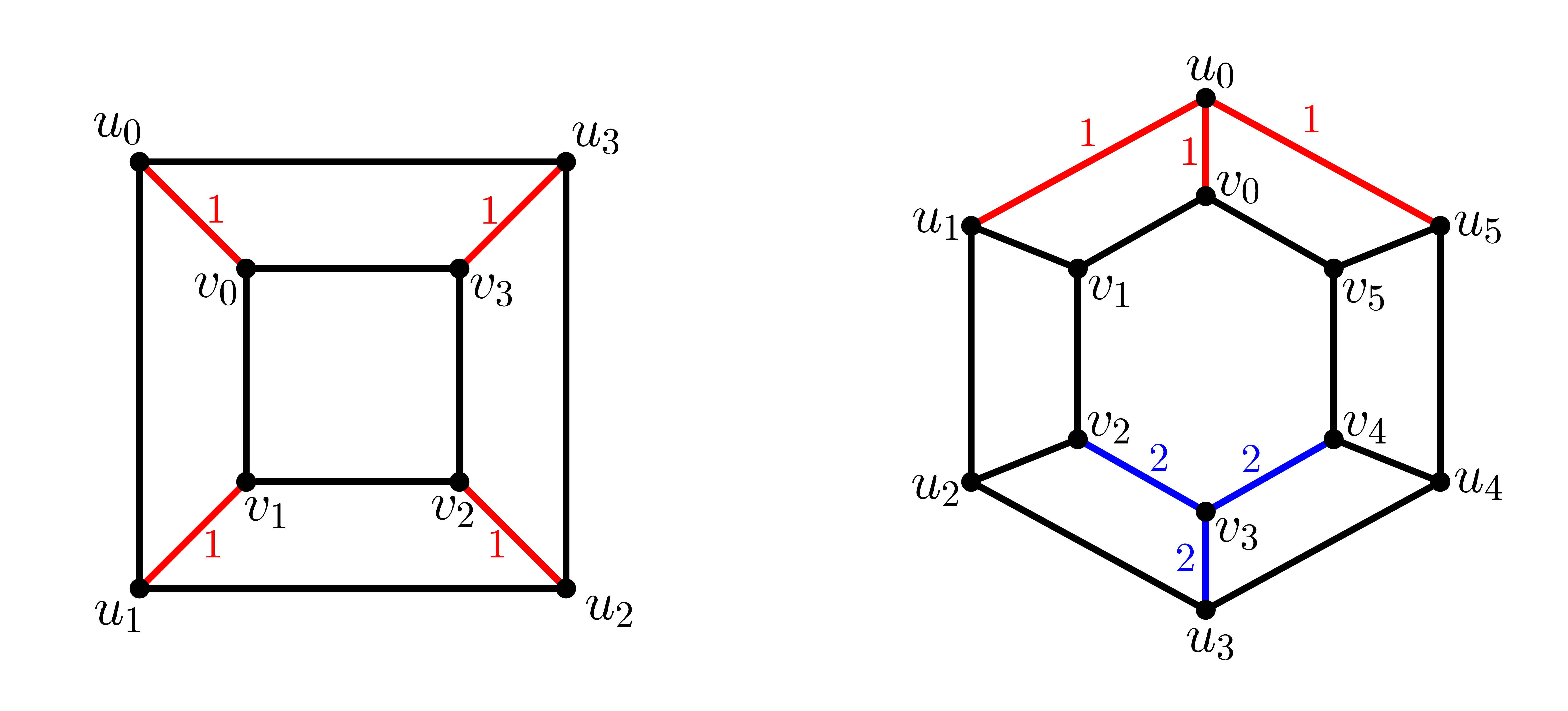}}

\vskip -.1cm

$(a)$~$P_{4,1}$~~~~~~~~~~~~~~~~~~~~~~~~~~~~~~~~~~~~~~~~~~~$(b)$~$P_{6,1}$

\vskip .2cm

Fig.~3 ~$P_{4,1}$ and $P_{6,1}$ have no rainbow $C_6$ under the coloring $\phi$.

\end{figure}

\begin{lemma}
\label{3.9}
$Ar(P_{4,1}, C_6)=9$.
\end{lemma}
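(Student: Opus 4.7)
The plan is to follow the same two-step pattern used in Lemmas 3.3 and 3.8: first enumerate all copies of $C_6$ in $P_{4,1}$ to form the family $\Psi$, then establish the lower bound by an explicit $9$-coloring, and finally establish the upper bound by analyzing the hypergraph $H_{P_{4,1},\Psi}$ via Lemma \ref{2.2}.

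First, I would list every $C_6$ in $P_{4,1}$ by splitting on $t:=|V(C_6)\cap O_4|$, which must lie in $\{2,3,4\}$ (case $t=0$ is ruled out because $I_4=C_4$, and $t=6$ is impossible since $|O_4|=4$). Running through each case in the style of the proof of Proposition \ref{1.1} with $n=4$, $k=1$, I expect to produce three orbits, each of size $4$:
\begin{align}
G^{1}_i &:= u_iu_{i+1}u_{i+2}u_{i+3}v_{i+3}v_iu_i, \nonumber\\
G^{2}_i &:= u_iu_{i+1}u_{i+2}v_{i+2}v_{i+1}v_iu_i, \nonumber\\
G^{3}_i &:= u_iu_{i+1}u_{i+2}v_{i+2}v_{i+3}v_iu_i, \nonumber\\
G^{4}_i &:= u_iu_{i+1}v_{i+1}v_{i+2}v_{i+3}v_iu_i, \nonumber
\end{align}
for $0\le i\le 3$, giving $|\Psi|=16$ copies in total. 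Along the way I note that each cycle in $\Psi$ meets the spoke set $\{u_iv_i:0\le i\le 3\}$ in exactly two edges.

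For the lower bound I would color all four spokes with a single color $1$ and assign pairwise distinct colors $2,\dots,9$ to the remaining $8$ edges (matching Fig.~3$(a)$). Since each $C_6\in\Psi$ contains at least two spokes, no rainbow $C_6$ appears, so $Ar(P_{4,1},C_6)\ge 9$.

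For the upper bound I would build $H_{P_{4,1},\Psi}$ on $12$ hyperedges and $16$ vertices. A double count gives $\sum_{F}|F|=6\cdot 16=96=12\cdot 8$, and checking one representative from each of the three edge-orbits (outer, inner, spoke) of $P_{4,1}$ against the list above confirms that every edge lies in exactly $8$ cycles of $\Psi$, so $r(H_{P_{4,1},\Psi})=8$. Then $s(H_{P_{4,1},\Psi})$ is bounded by going through the finitely many edge-pair types (two outer edges that share a vertex, are opposite, etc.; outer–spoke, outer–inner, spoke–spoke, spoke–inner, inner–inner) and counting common cycles; I expect the maximum to be $4$, realised e.g.\ for two adjacent outer edges $u_0u_1,u_1u_2$ (they lie together in $G^{1}_0$, $G^{1}_3$, $G^{2}_0$, $G^{3}_0$). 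Feeding $|E(P_{4,1})|=12$, $|\Psi|=16$, $r=8$, $s=4$ into Lemma \ref{2.2} yields
\[
Ar(P_{4,1},C_6)=Ar_{P_{4,1}}(\Psi)\le 12-\left\lceil\tfrac{2\cdot 16}{8+4}\right\rceil=12-3=9,
\]
which together with the lower bound completes the proof. The main obstacle is the bookkeeping for $s(H_{P_{4,1},\Psi})$: because $r=8$ is large relative to $l=16$, the Lemma \ref{2.2} estimate is tight, and one must verify carefully that no edge-pair is in $5$ or more common $6$-cycles. Using the $D_4\times\mathbb Z_2$ symmetry of $P_{4,1}$ to reduce to a handful of pair-types should keep this check manageable.
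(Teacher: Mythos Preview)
Your proposal is correct and follows essentially the same approach as the paper: enumerate the sixteen copies of $C_6$ (your $G^{1}_i,\dots,G^{4}_i$ are exactly the paper's $G^{4}_i,G^{2}_i,G^{3}_i,G^{1}_i$, just relabelled), use the all-spokes-one-color coloring for the lower bound, and apply Lemma~\ref{2.2} with $r=8$, $s=4$ for the upper bound. One small slip: you write ``three orbits, each of size $4$'' but then list four orbits and correctly use $|\Psi|=16$; otherwise the argument matches the paper, which simply asserts $s(H_{P_{4,1},\Psi})=4$ without the pair-type case analysis you outline.
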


\begin{proof} Let $G^1_i:=u_iu_{i+1}v_{i+1}v_{i+2}v_{i+3}v_iu_i$,
$G^2_i:=u_iu_{i+1}u_{i+2}v_{i+2}v_{i+1}v_iu_i$, $G^3_i:=u_iu_{i+1}$ $u_{i+2}v_{i+2}v_{i+3}v_iu_i$ and
$G^4_i:=u_iu_{i+1}u_{i+2}u_{i+3}v_{i+3}v_iu_i$ for $0\leq i\leq 3$. Then
$\Psi=\{G^j_i~|~0\leq i\leq 3, 1\leq j\leq 4\}$ is the set of all $C_6$ in
$P_{4,1}$, and then $Ar(P_{4,1}, C_6)=Ar_{P_{4,1}}(\Psi)$.

Let $\phi: E(P_{4,1})\rightarrow \{1,2,\ldots,9\}$ be an edge-coloring of
$P_{4,1}$ such that $\phi(u_iv_i)=1$ for $0\leq i\leq 3$ and every other edges
having distinct colorings from $\{2,\ldots,9\}$ (see Fig.~3$(a)$). Note that each
$C_6\in \Psi$ contains two spokes, and hence $P_{4,1}$ has no rainbow $C_6$ under
the coloring $\phi$. So, $Ar(P_{4,1}, C_6)\geq 9$.

Now we will show that $Ar(P_{4,1}, C_6)\geq 9$. Let
$V(H_{P_{4,1},\Psi})=\{x_i,y_i,z_i,w_i~|~0\leq i\leq 3\}$ and
$E(H_{P_{4,1},\Psi})=\{F_0,\ldots,F_{11}\}$, where $x_i$, $y_i$,
$z_i$, $w_i$ correspond to $G^1_i$, $G^2_i$, $G^3_i$, $G^4_i$ in
$\Psi$, respectively, each $F_i$ corresponds to $u_iu_{i+1}$,
$F_{i+4}$ corresponds to $u_iv_i$ and $F_{i+8}$ corresponds to $v_iv_{i+1}$ for $0\leq i\leq 3$.
Note that each $u_iu_{i+1}$ is contained in $G^1_i$, $G^2_{i}$, $G^2_{i+3}$,
$G^3_{i}$, $G^3_{i+3}$, $G^4_i$, $G^4_{i+2}$ and $G^4_{i+3}$, $u_iv_i$
is contained in $G^1_i$, $G^1_{i+3}$, $G^2_{i}$, $G^2_{i+2}$, $G^3_{i}$,
$G^3_{i+2}$, $G^4_i$ and $G^4_{i+1}$, and $v_iv_{i+1}$ is contained in
$G^1_{i+1}$, $G^1_{i+2}$, $G^1_{i+3}$, $G^2_{i}$, $G^2_{i+3}$, $G^3_{i+1}$,
$G^3_{i+2}$ and $G^4_{i+1}$ for $0\leq i\leq 3$. It follows that
$F_i=\{x_i,y_i,y_{i+3},z_i,z_{i+3},w_i,w_{i+2},w_{i+3}\}$,
$F_{i+4}=\{x_i,x_{i+3},y_i,y_{i+2},z_i,z_{i+2},w_i,w_{i+1}\}$ and
$F_{i+8}=\{x_{i+1},x_{i+2},x_{i+3},y_i,y_{i+3},z_{i+1},z_{i+2},w_{i+1}\}$ for
$0\leq i\leq 3$, and thus $r(H_{P_{4,1},\Psi})=8$. Obviously,
$s(H_{P_{4,1},\Psi})=4$. By Lemma \ref{2.2}, we have $Ar(P_{4,1},
C_6)=Ar_{P_{4,1}}(\Psi)\leq |E(P_{4,1})|-\lceil \frac{2|\Psi|}{8+4} \rceil=9$.
\end{proof}


\begin{lemma}
\label{3.10}
$Ar(P_{6,1}, C_6)=14$.
\end{lemma}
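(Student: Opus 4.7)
The plan is to mirror the structure of Lemmas~3.8 and 3.9. First, I enumerate all $C_6$ copies of $P_{6,1}$. A short case analysis on the number of spokes used by a $C_6$ shows that $4$ or $6$ spokes would force at least $4$ distinct outer and $4$ distinct inner vertices, exceeding the cycle's $6$-vertex budget; so only $0$-spoke or $2$-spoke hexagons arise. The $0$-spoke case gives the outer rim $G^O=u_0u_1\cdots u_5u_0$ and inner rim $G^I=v_0v_1\cdots v_5v_0$. The $2$-spoke case forces outer path $+$ inner path $=4$ with both paths on the $C_6$ rims, so the spokes must be at distance $2$ and the cycle is $G_i=u_iu_{i+1}u_{i+2}v_{i+2}v_{i+1}v_iu_i$. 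Thus $\Psi=\{G^O,G^I,G_0,\ldots,G_5\}$ has $|\Psi|=8$ and $Ar(P_{6,1},C_6)=Ar_{P_{6,1}}(\Psi)$.

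For the lower bound $Ar_{P_{6,1}}(\Psi)\ge 14$, I would exhibit (cf.\ Fig.~3$(b)$) a $14$-coloring consisting of four color classes of size two and ten singleton classes. The key observation is that three types of pairs realize a double intersection of hyperedges in $H_{P_{6,1},\Psi}$: the pair $(u_iu_{i+1},u_{i+1}u_{i+2})$ belongs precisely to $\{G^O,G_i\}$; the pair $(v_iv_{i+1},v_{i+1}v_{i+2})$ belongs to $\{G^I,G_i\}$; and the "parallel'' pair $(u_iu_{i+1},v_iv_{i+1})$ belongs to $\{G_{i-1},G_i\}$. Choosing the four edge-disjoint classes $\{u_0u_1,u_1u_2\}$, $\{v_3v_4,v_4v_5\}$, $\{u_2u_3,v_2v_3\}$, $\{u_5u_0,v_5v_0\}$, which cover $\{G^O,G_0\}$, $\{G^I,G_3\}$, $\{G_1,G_2\}$, $\{G_4,G_5\}$ respectively, ensures every cycle in $\Psi$ has a repeated color, and uses $18-4=14$ colors.

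For the upper bound $Ar_{P_{6,1}}(\Psi)\le 14$, I would apply Lemma~2.2 to $H_{P_{6,1},\Psi}$. Each outer (resp.\ inner) edge lies in the outer (resp.\ inner) rim together with two consecutive ladder hexagons, and each spoke $u_iv_i$ lies in exactly $G_i$ and $G_{i-2}$; hence $r(H_{P_{6,1},\Psi})=3$. For $s$, the explicit computations above show every pair of hyperedges meets in at most two vertices: the only way to obtain an intersection of three would be for two hyperedges to share both $G^O$ and $G^I$, which is impossible since no edge of $P_{6,1}$ lies on both rims. Thus $s(H_{P_{6,1},\Psi})=2$ and Lemma~2.2 gives $Ar_{P_{6,1}}(\Psi)\le 18-\lceil 2\cdot 8/(3+2)\rceil=18-4=14$.

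The main obstacle is discovering the lower-bound coloring. Because $\lceil 2|\Psi|/(r+s)\rceil=4$ matches the upper bound exactly, every one of the four size-two color classes must achieve the maximum intersection value $s=2$, so there is no slack and only the three families of pairs listed above can be used. Classifying these pairs and checking that one can pick four of them which are edge-disjoint and whose covered-cycle sets partition $\Psi$ is precisely what makes the construction work.
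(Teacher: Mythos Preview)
Your proof is correct and follows the same architecture as the paper: enumerate $\Psi=\{G^O,G^I,G_0,\dots,G_5\}$, exhibit an explicit $14$-coloring for the lower bound, and apply Lemma~\ref{2.2} with $r=3$, $s=2$ for the upper bound. The only difference is cosmetic: for the lower bound the paper uses two size-three colour classes $E(u_0)$ and $E(v_3)$ (every $C_6$ in $\Psi$ passes through one of these two vertices), whereas you use four size-two classes; both yield $14$ colours and are equally easy to verify.
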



\begin{proof} Let $G^1_i:=u_iu_{i+1}u_{i+2}v_{i+2}v_{i+1}v_iu_i$ for $0\leq i\leq
5$, $G^2:=u_0u_1u_2u_3u_4u_5u_6u_0$ and $G^3:=v_0v_1v_2v_3v_4v_5v_6v_0$. Then
$\Psi=\{G^1_i,G^2,G^3~|~0\leq i\leq 5\}$ is the set of all copies of $C_6$ in
$P_{6,1}$, and $Ar(P_{6,1}, C_6)=Ar_{P_{6,1}}(\Psi)$.

Let $\phi: E(P_{6,1})\rightarrow \{1,2,\ldots,14\}$ be an edge-coloring of
$P_{6,1}$ defined as follows:
\begin{eqnarray}
   \phi(e)=\left\{
\begin{array}{ll}
   1,                          &\mbox{if}~e\in E(u_0);       \nonumber   \\
   2,                          &\mbox{if}~e\in E(v_3),       \nonumber
   \end{array}\right.
\end{eqnarray}
and every other edges obtains distinct colorings from $\{3,\ldots,14\}$ (see
Fig.~3$(b)$). Note that each $C_6\in \Psi$ contains two edges of $E(u_0)$ (or
$E(v_3)$), and so $P_{6,1}$ has no rainbow $C_6$. Hence, $Ar(P_{6,1}, C_6)\geq
14$.

On the other hand, let $V(H_{P_{6,1},\Psi})=\{x_i,y,z~|~0\leq i\leq 5\}$ and
$E(H_{P_{6,1},\Psi})=\{F_0,\ldots,F_{17}\}$, where $x_i$, $y$ and $z$
correspond to $G^1_i$, $G^2$ and $G^3$ in $\Psi$, respectively, each
$F_i$ corresponds to $u_iu_{i+1}$, $F_{i+6}$ corresponds to $u_iv_i$ and each $F_{i+12}$ corresponds to $v_iv_{i+1}$ for
$0\leq i\leq 5$. Note that each $u_iu_{i+1}$ is contained in $G^1_i$,
$G^1_{i+5}$ and $G^2$, $u_iv_i$ is contained in $G^1_i$ and
$G^1_{i+4}$, and $v_iv_{i+1}$ is contained in $G^1_i$, $G^1_{i+5}$ and
$G^3$ for $0\leq i\leq 5$.  It follows that $F_i=\{x_i,x_{i+5},y\}$,
$F_{i+6}=\{x_i,x_{i+4}\}$ and $F_{i+12}=\{x_i,x_{i+5},z\}$ for $0\leq i\leq 5$,
and then $r(H_{P_{6,1},\Psi})=3$ and $s(H_{P_{6,1},\Psi})=2$. By Lemma \ref{2.2},
we have $Ar(P_{6,1}, C_6)=Ar_{P_{6,1}}(\Psi)\leq |E(P_{6,1})|-\lceil
\frac{2|\Psi|}{3+2} \rceil=14$.
\end{proof}


\begin{lemma}
\label{3.11}
$Ar(P_{n,1}, C_6)=\lfloor \frac{5n}{2}\rfloor$ for $n\geq 5$ and $n\neq 6$.
\end{lemma}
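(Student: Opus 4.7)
The plan is to apply Lemma \ref{2.1} after identifying the family of all copies of $C_6$ in $P_{n,1}$. Set $G_i:=u_iu_{i+1}u_{i+2}v_{i+2}v_{i+1}v_iu_i$ for $0\le i\le n-1$ and $\Psi=\{G_i:0\le i\le n-1\}$. My first step is to check that $\Psi$ is exactly the set of all copies of $C_6$ in $P_{n,1}$ under the hypotheses $n\ge 5$ and $n\ne 6$. The bipartition $V(P_{n,1})=O_n\cup I_n$ forces the number $s$ of spokes in any cycle to be even, and since every vertex is incident to only one spoke, no two spokes can be consecutive edges in a cycle, which gives $s\le 6-s$. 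Hence $s\in\{0,2\}$. The case $s=0$ would put the $C_6$ in one rim, forcing $n=6$ (excluded); the case $s=2$ easily forces the two spoke-indices to be at outer-rim distance $2$ (and hence inner-rim distance $2$, since $k=1$), yielding the ladder cycles $G_i$. Thus $Ar(P_{n,1},C_6)=Ar_{P_{n,1}}(\Psi)$.

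Next I would build $H:=H_{P_{n,1},\Psi}$ on the vertex set $\{x_0,\ldots,x_{n-1}\}$, with $x_i$ corresponding to $G_i$. A direct incidence check gives $u_iu_{i+1}\in G_{i-1}\cap G_i$, $v_iv_{i+1}\in G_{i-1}\cap G_i$, and $u_iv_i\in G_{i-2}\cap G_i$ for every $i$; in particular each of the $3n$ edges of $P_{n,1}$ lies in exactly two members of $\Psi$, so $H$ is a rank-$2$ (multi)graph and Lemma \ref{2.1} applies. Structurally, $H$ is obtained from the $n$-cycle $x_0x_1\cdots x_{n-1}x_0$ by doubling each consecutive edge $\{x_{i-1},x_i\}$ (one copy from the outer rim edge $u_iu_{i+1}$, one from the inner rim edge $v_iv_{i+1}$) and adding the chord $\{x_{i-2},x_i\}$ for each $i$ (from the spoke $u_iv_i$).

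The final step is to compute $M(H)$. Taking the digons formed by parallel edges on $\{x_{2j},x_{2j+1}\}$ for $0\le j\le \lfloor n/2\rfloor-1$ gives $\lfloor n/2\rfloor$ vertex-disjoint cycles of length $2$, and the trivial bound $|V(H)|/2$ shows no better is possible. Plugging into Lemma \ref{2.1} yields
\[
Ar(P_{n,1},C_6)=|E(P_{n,1})|-|\Psi|+M(H)=3n-n+\lfloor n/2\rfloor=\lfloor 5n/2\rfloor.
\]
The only delicate point is the classification of the $C_6$'s in the first step: this is exactly where the hypothesis $n\ne 6$ is essential, because for $n=6$ the outer and inner rims themselves contribute two additional $C_6$'s (handled separately in Lemma \ref{3.10}); everything else is a routine incidence computation and a straightforward maximum matching of digons.
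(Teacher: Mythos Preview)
Your proof is correct and follows essentially the same approach as the paper: you identify the same family $\Psi=\{G_i\}$ of ladder $6$-cycles, build the same auxiliary multigraph $H_{P_{n,1},\Psi}$ (the doubled $n$-cycle with the length-$2$ chords), compute $M(H)=\lfloor n/2\rfloor$ via the digon packing, and apply Lemma~\ref{2.1}. You supply a bit more justification than the paper does for why $\Psi$ really is all of the $C_6$'s when $n\ge 5$, $n\ne 6$, but the argument is the same.
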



\begin{proof} Since $n\geq 5$ and $n\neq 6$, $\Psi=\{G_i~|~0\leq i\leq n-1\}$ is
the set of all copies of $C_6$ in $P_{n,1}$, where
$G_i:=u_iu_{i+1}u_{i+2}v_{i+2}v_{i+1}v_iu_i$. Note that each $u_iu_{i+1}$ is
contained in $G_i$ and $G_{i+n-1}$, $u_iv_i$ is contained in $G_i$ and
$G_{i+n-2}$, $v_iv_{i+1}$ is contained in $G_i$ and $G_{i+n-1}$ for
$0\leq i\leq n-1$. Thus $H_{P_{n,1},\Psi}$ is a graph obtained from
$C_n=w_0w_1\cdots w_{n-1}w_0$ by adding one parallel edge on each edge of $C_n$
and edge $w_iw_{i+2}$ for $0\leq i\leq n-1$. Then $M(H_{P_{n,1},\Psi})=\lfloor
\frac{n}{2}\rfloor$. By Lemma \ref{2.1},
$Ar(P_{n,1},C_6)=|E(P_{n,1})|-|\Psi|+M(H_{P_{n,1},\Psi})=\lfloor
\frac{5n}{2}\rfloor$.
\end{proof}


By Lemmas \ref{3.8}-\ref{3.11}, the proof of Theorem \ref{1.3} is completed.

\subsection{$k=2$}

In this subsection, we focus on the case that $k=2$. Then $n\ge 5$. By
Proposition \ref{1.1}$(ii)$, we have $n\in\{5,6,7,12\}$.

\begin{figure}[!htb]
\centering
{\includegraphics[height=0.35\textwidth]{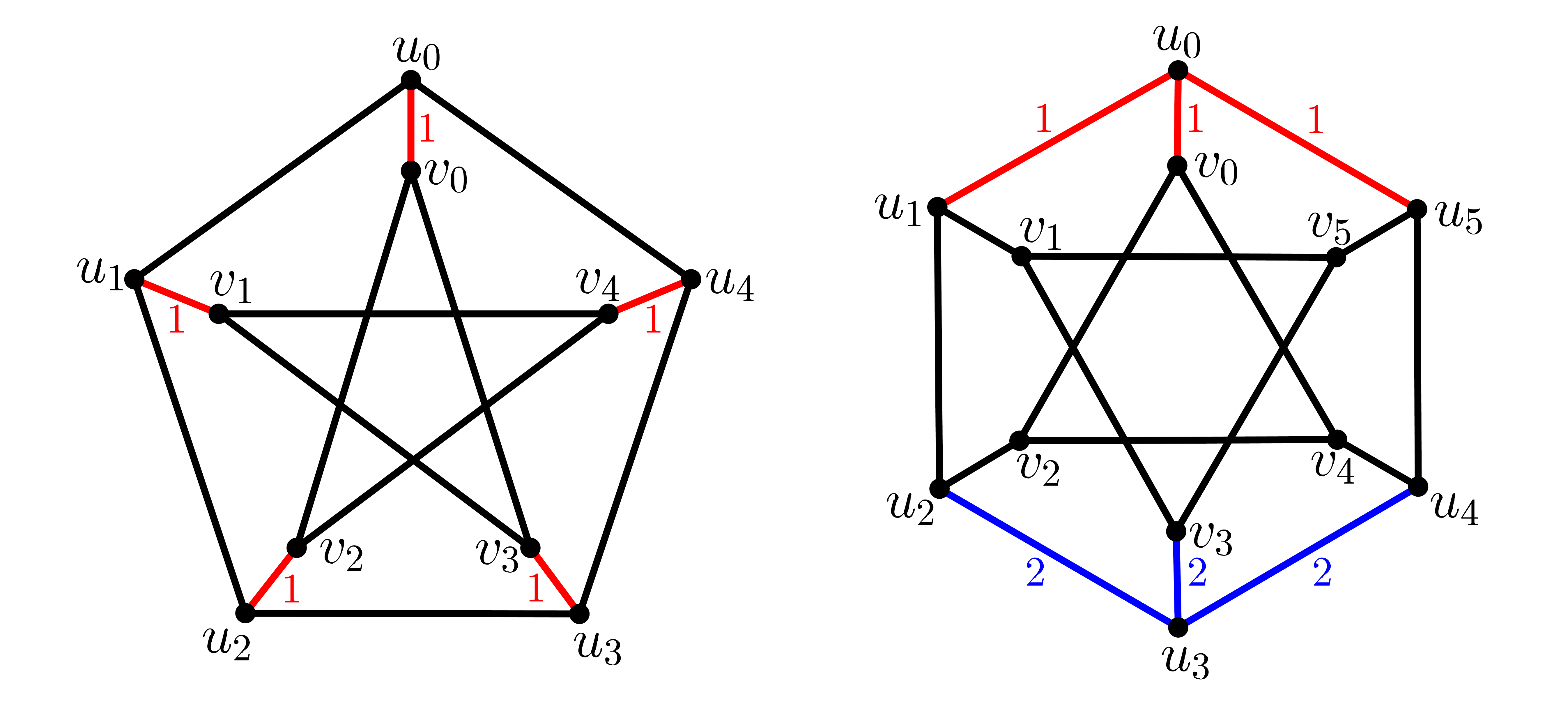}}

\vskip -.1cm

~~~$(a)$~$P_{5,2}$~~~~~~~~~~~~~~~~~~~~~~~~~~~~~~~~~~~~~$(b)$~$P_{6,2}$

\vskip .3cm

Fig.~4 ~$P_{5,2}$ and $P_{6,2}$ have no rainbow $C_6$ under the coloring $\phi$.

\end{figure}


\begin{lemma}
\label{3.12}
$Ar(P_{5,2}, C_6)=11$.
\end{lemma}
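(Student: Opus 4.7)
The plan is to first enumerate $\Psi$, the set of all copies of $C_6$ in $P_{5,2}$, and this step requires more care than Proposition \ref{1.1} might suggest. Since $n=5$ and $k=2$, the value $k=2$ satisfies both $k=(n+1)/3$ \emph{and} $k=n-3$ (equivalently, $P_{5,2}\cong P_{5,3}$ realizes the ``$k=3$'' case of Proposition \ref{1.1}(ii) inside $P_{5,2}$). Accordingly there are two families: from $|V(C_6)\cap O_n|=2$ the cycles $G_i:=u_iu_{i+1}v_{i+1}v_{i+4}v_{i+2}v_iu_i$ for $0\le i\le 4$, and from $|V(C_6)\cap O_n|=4$ the cycles $G'_i:=u_iu_{i+1}u_{i+2}u_{i+3}v_{i+3}v_iu_i$ for $0\le i\le 4$, where all indices are taken mod $5$. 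A short case analysis on $|V(C_6)\cap O_n|\in\{2,3,4\}$ (or equivalently a walk enumeration from a fixed vertex) confirms that these are the only copies, so $|\Psi|=10$.

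For the lower bound $Ar(P_{5,2},C_6)\ge 11$, I would color all five spokes $u_iv_i$ with color $1$ and assign the remaining $10$ outer and inner edges pairwise distinct colors $2,\ldots,11$. Every $G_i$ contains the two spokes $u_iv_i$ and $u_{i+1}v_{i+1}$, and every $G'_i$ contains the two spokes $v_iu_i$ and $u_{i+3}v_{i+3}$, so no member of $\Psi$ is rainbow, giving an $11$-coloring with no rainbow $C_6$. For the upper bound $Ar(P_{5,2},C_6)\le 11$, I would apply Lemma \ref{2.2} to the hypergraph $H_{P_{5,2},\Psi}$ on $l=10$ vertices with $m=15$ hyperedges. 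The key numerical input is that every edge of $P_{5,2}$ is contained in exactly $4$ members of $\Psi$: each outer edge $u_iu_{i+1}$ lies in $G_i$ and in three $G'_j$'s (one for each of the three outer edges of the length-$3$ outer path in $G'_j$); each spoke $u_iv_i$ lies in $G_{i-1},G_i$ and in $G'_i,G'_{i-3}$; and each inner edge $v_iv_{i+2}$ lies in three $G_j$'s (corresponding to the three $G_j$'s whose inner edges are $\{e^I_j,e^I_{j+2},e^I_{j+4}\}$) and in one $G'_{i-3}$. Hence $r(H_{P_{5,2},\Psi})=4$.

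To bound $s$, I would verify by a short type-by-type check (outer--outer, outer--spoke, outer--inner, spoke--spoke, spoke--inner, inner--inner) that any two distinct hyperedges intersect in at most $2$ vertices, i.e., any two edges of $P_{5,2}$ lie together in at most $2$ common copies of $C_6$; the extremal cases are adjacent outer edges, adjacent outer--spoke pairs, spoke--inner pairs of the form $\{u_{k+1}v_{k+1},e^I_k\}$, and inner--inner pairs at cyclic distance $2$. Thus $s(H_{P_{5,2},\Psi})=2$, and Lemma \ref{2.2} yields
\[
Ar(P_{5,2},C_6)=Ar_{P_{5,2}}(\Psi)\le 15-\left\lceil\frac{2\cdot 10}{4+2}\right\rceil=15-4=11.
\]
The main obstacle is the enumeration of $\Psi$: if one only counts the $G_i$'s (as a naive reading of Proposition \ref{1.1}(ii) invites), then one obtains a ``$12$-coloring'' by recoloring four inner edges with a single color, and would wrongly conclude $Ar\ge 12$; the $G'_i$-family is essential because $G'_1$ uses only one inner edge and is rainbow under that putative coloring. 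Once $\Psi$ is correct, the rest of the argument is routine arithmetic.
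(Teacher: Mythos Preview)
Your proposal is correct and follows essentially the same approach as the paper: you enumerate the same ten copies of $C_6$ (the paper's $G^1_i$ and $G^2_i$ are your $G_i$ and $G'_i$), use the identical spoke-coloring for the lower bound, and obtain the upper bound via Lemma~\ref{2.2} with $r=4$ and $s=2$. Your additional remark that $P_{5,2}\cong P_{5,3}$ explains why the second family appears is a nice clarification, but otherwise the argument matches the paper's proof.
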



\begin{proof} Let $G^1_i=u_iu_{i+1}v_{i+1}v_{i+4}v_{i+2}v_iu_i,
G^2_i=u_iu_{i+1}u_{i+2}u_{i+3}v_{i+3}v_iu_i$ for $0\leq i\leq 4$. Then
$\Psi=\{G^1_i, G^2_i~|~0\leq i\leq 4\}$ is the set of all copies of $C_6$ in
$P_{5,2}$.

Let $\phi: E(P_{5,2})\rightarrow \{1,2,\ldots,11\}$ be an edge-coloring of
$P_{5,2}$ such that $\phi(u_iv_i)=1$ for $0\leq i\leq 4$ and every other edges
obtains distinct colorings from $\{2,\ldots,11\}$ (see Fig.~4$(a)$). Note that
each $C_6\in \Psi$ contains two spokes, and so $P_{5,2}$ has no rainbow $C_6$.
Hence, $Ar(P_{5,2}, C_6)\geq 11$.

Next we will show that $Ar(P_{5,2}, C_6)\leq 11$. Let
$V(H_{P_{5,2},\Psi})=\{x_i,y_i~|~0\leq i\leq 4\}$ and
$E(H_{P_{5,2},\Psi})=\{F_0,\ldots,F_{14}\}$, where $x_i$ and $y_i$
correspond to $G^{1}_i$ and $G^{2}_i$ in $\Psi$, respectively, each
$F_i$ corresponds to $u_iu_{i+1}$, $F_{i+5}$ corresponds to $u_iv_i$, and $F_{i+10}$ corresponds to $v_iv_{i+2}$ for
$0\leq i\leq 4$.
Note that each edge $u_iu_{i+1}$ is contained in $G^1_i$, $G^2_i$, $G^2_{i+3}$
and $G^2_{i+4}$, $u_iv_i$ is contained in $G^1_i$, $G^1_{i+4}$,
$G^2_i$ and $G^2_{i+2}$, $v_iv_{i+2}$ is contained in $G^1_i$,
$G^1_{i+1}$, $G^1_{i+3}$ and $G^2_{i+2}$ for $0\leq i\leq 4$. It follows that $F_i=\{x_i,y_i,y_{i+3},y_{i+4}\}$,
$F_{i+5}=\{x_i,x_{i+4},y_i,y_{i+2}\}$ and
$F_{i+10}=\{x_i,x_{i+1},x_{i+3},y_{i+2}\}$ for $0\leq i\leq 4$, and thus
$r(H_{P_{5,2},\Psi})=4$ and $s(H_{P_{5,2},\Psi})=2$. By Lemma \ref{2.2}, we have
$Ar(P_{5,2},C_6)\leq |E(P_{5,2})|-\lceil \frac{2|\Psi|}{4+2} \rceil=15-\lceil
\frac{10}{3} \rceil =11$.
\end{proof}


\begin{lemma}
\label{3.13}
$Ar(P_{6,2}, C_6)=14$.
\end{lemma}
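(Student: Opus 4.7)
The plan is to enumerate the copies of $C_6$ in $P_{6,2}$, form the auxiliary hypergraph $H_{P_{6,2},\Psi}$, obtain the upper bound via Lemmas~\ref{2.3} and~\ref{2.4}, and match it with an explicit $14$-colouring. First I would identify $\Psi$. By Proposition~\ref{1.1}(ii) the relevant case is $k=2=(n-2)/2$ with $n=6$, and a quick check of the possible values $|V(C_6)\cap O_n|\in\{0,2,3,6\}$ (the value $4$ needs $k=3$; the inner rim is the disjoint union of the triangles on $\{v_0,v_2,v_4\}$ and $\{v_1,v_3,v_5\}$, so the value $0$ is impossible; the value $2$ is ruled out as in Proposition~\ref{1.1}) would give $\Psi=\{G_0,\ldots,G_5,G^{*}\}$, where $G_i:=u_iu_{i+1}u_{i+2}v_{i+2}v_{i+4}v_iu_i$ for $0\le i\le 5$ and $G^{*}:=u_0u_1u_2u_3u_4u_5u_0$. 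Thus $|\Psi|=7$ and $Ar(P_{6,2},C_6)=Ar_{P_{6,2}}(\Psi)$.

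For the upper bound, set $V(H_{P_{6,2},\Psi})=\{x_0,\ldots,x_5,y\}$ with $x_i\leftrightarrow G_i$ and $y\leftrightarrow G^{*}$. Tracking the incidences, each outer edge $u_iu_{i+1}$ yields the hyperedge $\{x_{i-1},x_i,y\}$ (since $u_iu_{i+1}\in G_{i-1}\cap G_i\cap G^{*}$), each spoke $u_iv_i$ yields $\{x_{i-2},x_i\}$, and each inner edge $v_iv_{i+2}$ yields $\{x_{i-2},x_{i+2}\}$. Hence $r(H_{P_{6,2},\Psi})=3$ and $s(H_{P_{6,2},\Psi})=2$. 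Since $|V(H_{P_{6,2},\Psi})|=7$ and $|\mathscr{F}|=18\ge 4$, Lemma~\ref{2.3} gives $H_{P_{6,2},\Psi}\in\mathscr{P}_{3,7}$, and then Lemma~\ref{2.4} yields $Ar(P_{6,2},C_6)\le 18-4=14$.

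For the lower bound I would colour the three edges of $E(u_0)$ with colour $1$, the three edges of $E(u_3)$ with colour $2$, and give the remaining $12$ edges distinct colours $3,\ldots,14$. A short verification shows that $G_0,G_4,G_5$ each contain two edges incident with $u_0$, that $G_1,G_2,G_3$ each contain two edges incident with $u_3$, and that $G^{*}$ contains the pairs $\{u_0u_1,u_5u_0\}\subseteq E(u_0)$ and $\{u_2u_3,u_3u_4\}\subseteq E(u_3)$; hence no $C_6$ is rainbow, giving $Ar(P_{6,2},C_6)\ge 14$.

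The main obstacle is that Lemma~\ref{2.2} only yields the weaker bound $Ar\le 18-\lceil 14/5\rceil=15$, so the finer property $\mathscr{P}_{3,7}$ from Lemma~\ref{2.3} is indispensable for the tight upper bound. On the lower-bound side, the ``all spokes monochromatic'' construction used for $P_{5,2}$ in Lemma~\ref{3.12} fails here because the outer cycle $G^{*}$ would remain rainbow; the key idea is to replace it by the two antipodal stars $E(u_0)$ and $E(u_3)$, whose combined incidences cover every mixed cycle and simultaneously produce two monochromatic pairs on the outer rim, destroying $G^{*}$ as well.
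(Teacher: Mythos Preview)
Your proof is correct and follows essentially the same route as the paper: the same enumeration of $\Psi$ (your $G_i$ is the paper's $G^1_{i+1}$, so the family of cycles coincides up to a cyclic shift of index), the same upper bound via $r=3$, $s=2$ and Lemmas~\ref{2.3}--\ref{2.4}, and the identical lower-bound colouring using $E(u_0)$ and $E(u_3)$. Your remark that Lemma~\ref{2.2} alone gives only $15$ is a nice addition, but the argument itself is the paper's.
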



\begin{proof} Let $G^1_i:=u_iu_{i+1}v_{i+1}v_{i+3}v_{i+5}u_{i+5}u_i$ for $0\leq
i\leq 5$, and $G^2:=u_0u_1u_2u_3u_4u_5u_0$. Then $\Psi=\{ G^1_i,G^2~|~0\leq i\leq
5\}$ is the set of all $C_6$ in $P_{6,2}$, and $Ar(P_{6,2},
C_6)=Ar_{P_{6,2}}(\Psi)$.

Let $\phi: E(P_{6,2})\rightarrow \{1,2,\ldots,14\}$ be an edge-coloring of
$P_{6,2}$ defined as follows:
\begin{eqnarray}
   \phi(e)=\left\{
\begin{array}{ll}
   1,                          &\mbox{if}~e\in E(u_0);       \nonumber   \\
   2,                          &\mbox{if}~e\in E(u_3),       \nonumber
   \end{array}\right.
\end{eqnarray}
and every other edges obtains distinct colorings from $\{3,\ldots,14\}$ (see
Fig.~4$(b)$). Note that each $C_6\in \Psi$ contains two edges of $E(u_i)$ ($i\in
\{0,3\}$), and so $P_{6,2}$ has no rainbow $C_6$. Hence, $Ar(P_{6,2}, C_6)\geq
14$.

Next we will show that $Ar(P_{6,2}, C_6)\leq 14$. Let
$V(H_{P_{6,2},\Psi})=\{x_i,y~|~0\leq i\leq 5\}$ and
$E(H_{P_{6,2},\Psi})=\{F_0,\ldots,F_{17}\}$, where $x_i$ and $y$ correspond to $G^1_i$ and $G^2$ in $\Psi$, respectively, each $F_i$ corresponds to $u_iu_{i+1}$, $F_{i+6}$ corresponds to $u_iv_i$, and
$F_{i+12}$ corresponds to $v_iv_{i+2}$ for $0\leq i\leq 5$. Note that each edge $u_iu_{i+1}$ is contained in $G_i$, $G_{i+1}$ and $G^2$, $u_iv_i$ is contained in $G_{i+1}$ and $G_{i+5}$, $v_iv_{i+2}$ is
contained in $G_{i+3}$ and $G_{i+5}$ for $0\leq i\leq 5$. Hence $F_i=\{x_i,x_{i+1},y\}$, $F_{i+6}=\{x_{i+1},x_{i+5}\}$ and
$F_{i+12}=\{x_{i+3},x_{i+5}\}$ for $0\leq i\leq 5$, and thus
$r(H_{P_{6,2},\Psi})=3$ and $s(H_{P_{6,2},\Psi})=2$. By Lemma \ref{2.3}, $H_{P_{6,2},\Psi}\in \mathscr P_{3,7}$, and then by Lemma
\ref{2.4}, we have $Ar(P_{6,2}, C_6)=Ar_{P_{6,2}}(\Psi)\leq
|E(P_{6,2})|-(3+1)=14$.
\end{proof}


\begin{lemma}
\label{3.14}
$Ar(P_{7,2}, C_6)=17$.
\end{lemma}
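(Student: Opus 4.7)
The plan is to identify all copies of $C_6$ in $P_{7,2}$, then apply the hypergraph framework of Lemmas~\ref{2.3}--\ref{2.4} for the upper bound and display an explicit coloring for the lower bound. Since $k=2=(n-1)/3$ when $n=7$, the case analysis in the proof of Proposition~\ref{1.1}$(ii)$ (specifically the subcase $|V(C_6)\cap O_7|=2$) shows that every copy of $C_6$ in $P_{7,2}$ has the form $G_i:=u_iu_{i+1}v_{i+1}v_{i+3}v_{i+5}v_iu_i$ for some $0\le i\le 6$, so $\Psi=\{G_i:0\le i\le 6\}$ and $Ar(P_{7,2},C_6)=Ar_{P_{7,2}}(\Psi)$.

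For the upper bound, I would form the hypergraph $H:=H_{P_{7,2},\Psi}$ with $V(H)=\{x_0,\ldots,x_6\}$, where each $x_i$ corresponds to $G_i$. A direct incidence check gives: the outer edge $u_iu_{i+1}$ lies only in $G_i$ and contributes the loop $\{x_i\}$; the spoke $u_iv_i$ lies in $G_i$ and $G_{i-1}$ and contributes $\{x_i,x_{i-1}\}$; and the inner edge $v_iv_{i+2}$ lies in $G_{i-1}$, $G_{i-3}$ and $G_{i+2}$ and contributes $\{x_{i-1},x_{i-3},x_{i+2}\}$. Thus $|V(H)|=7$, $m=21$ and $r(H)=3$, and a short case check of the four possible pair-types shows $s(H)=2$, the maximum being attained by two triples whose indices differ by $\pm 2\pmod 7$. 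Since $m\ge 4$, Lemma~\ref{2.3} gives $H\in\mathscr{P}_{3,7}$, and Lemma~\ref{2.4} then yields $Ar_{P_{7,2}}(\Psi)\le |E(P_{7,2})|-4=17$.

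For the lower bound, I would define $\phi:E(P_{7,2})\to\{1,\ldots,17\}$ by setting $\phi(v_iv_{i+2})=1$ for $0\le i\le 4$ and assigning the sixteen remaining edges pairwise distinct colors from $\{2,\ldots,17\}$. By the incidence computed above, the three inner edges of $G_j$ correspond to the parameters $i\in\{j+1,\,j+3,\,j-2\}\pmod 7$; a brief enumeration shows that for each $j\in\{0,1,\ldots,6\}$ at least two of these three parameters fall inside $\{0,1,2,3,4\}$, so each $G_j$ contains at least two edges of color $1$. Hence $\phi$ uses $17$ colors and produces no rainbow $C_6$, giving $Ar_{P_{7,2}}(\Psi)\ge 17$. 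The main technical obstacle is the bookkeeping needed to justify $s(H)=2$ and to verify that this symmetric coloring avoids rainbow $C_6$'s in all seven cycles; once these are in place, both bounds follow straightforwardly from the general framework.
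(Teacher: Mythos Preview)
Your proof is correct and follows essentially the same approach as the paper: the identification of $\Psi$, the hypergraph computation, and the application of Lemmas~\ref{2.3}--\ref{2.4} for the upper bound are identical (up to the notational shift $i-1\equiv i+6\pmod 7$). For the lower bound the paper instead colors all three edges at $v_3$ with one color and all three edges at $v_4$ with another, but your coloring (five inner-rim edges sharing a single color) is equally valid and the verification you sketch is correct.
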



\begin{proof} Let $G_i:=u_iu_{i+1}v_{i+1}v_{i+3}v_{i+5}v_iu_i$ for $0\leq i\leq
6$. Then $\Psi=\{ G_i~|~0\leq i\leq 6\}$ is the set of all copies of $C_6$ in
$P_{7,2}$, and then $Ar(P_{7,2}, C_6)=Ar_{P_{7,2}}(\Psi)$.

Let $\phi: E(P_{7,2})\rightarrow \{1,2,\ldots,17\}$ be an edge-coloring of
$P_{7,2}$ defined as follows:
\begin{eqnarray}
   \phi(e)=\left\{
\begin{array}{ll}
   1,                          &\mbox{if}~e\in E(v_3);       \nonumber   \\
   2,                          &\mbox{if}~e\in E(v_4),       \nonumber
   \end{array}\right.
\end{eqnarray}
and every other edges obtains distinct colorings from $\{3,\ldots,17\}$ (see
Fig.~5$(a)$). Note that each $C_6\in \Psi$ contains two edges of $E(v_i)$ ($i\in
\{3,4\}$), and so $P_{7,2}$ has no rainbow $C_6$. Hence, $Ar(P_{7,2}, C_6)\geq
17$.

Next we will show that $Ar(P_{7,2}, C_6)\leq 17$. Let
$V(H_{P_{7,2},\Psi})=\{x_i~|~0\leq i\leq 6\}$ and
$E(H_{P_{7,2},\Psi})=\{F_0,\ldots,F_{20}\}$, where $x_i$ corresponds to $G_i$ in $\Psi$, each $F_i$ corresponds to $u_iu_{i+1}$,
$F_{i+7}$ corresponds to $u_iv_i$ and  $F_{i+14}$ corresponds to $v_iv_{i+2}$ for $0\leq i\leq 6$. Note that each $u_iu_{i+1}$ is contained in $G_i$, $u_iv_i$ is
contained in $G_i$ and $G_{i+6}$, $v_iv_{i+2}$ is contained in $G_{i+2}$,
$G_{i+4}$ and $G_{i+6}$ for $0\leq i\leq 6$. It follows that $F_i=\{x_i\}$,
$F_{i+6}=\{x_i,x_{i+6}\}$ and $F_{i+12}=\{x_{i+2},x_{i+4},x_{i+6}\}$ for $0\leq
i\leq 6$, and thus $r(H_{P_{7,2},\Psi})=3$ and $s(H_{P_{7,2},\Psi})=2$. By Lemma \ref{2.3}, $H_{P_{7,2},\Psi}\in \mathscr P_{3,7}$, and then by Lemma \ref{2.4}, $Ar(P_{7,2}, C_6)=Ar_{P_{7,2}}(\Psi)\leq
|E(P_{7,2})|-(3+1)=17$.
\end{proof}

\begin{figure}[!htb]
\centering
{\includegraphics[height=0.35\textwidth]{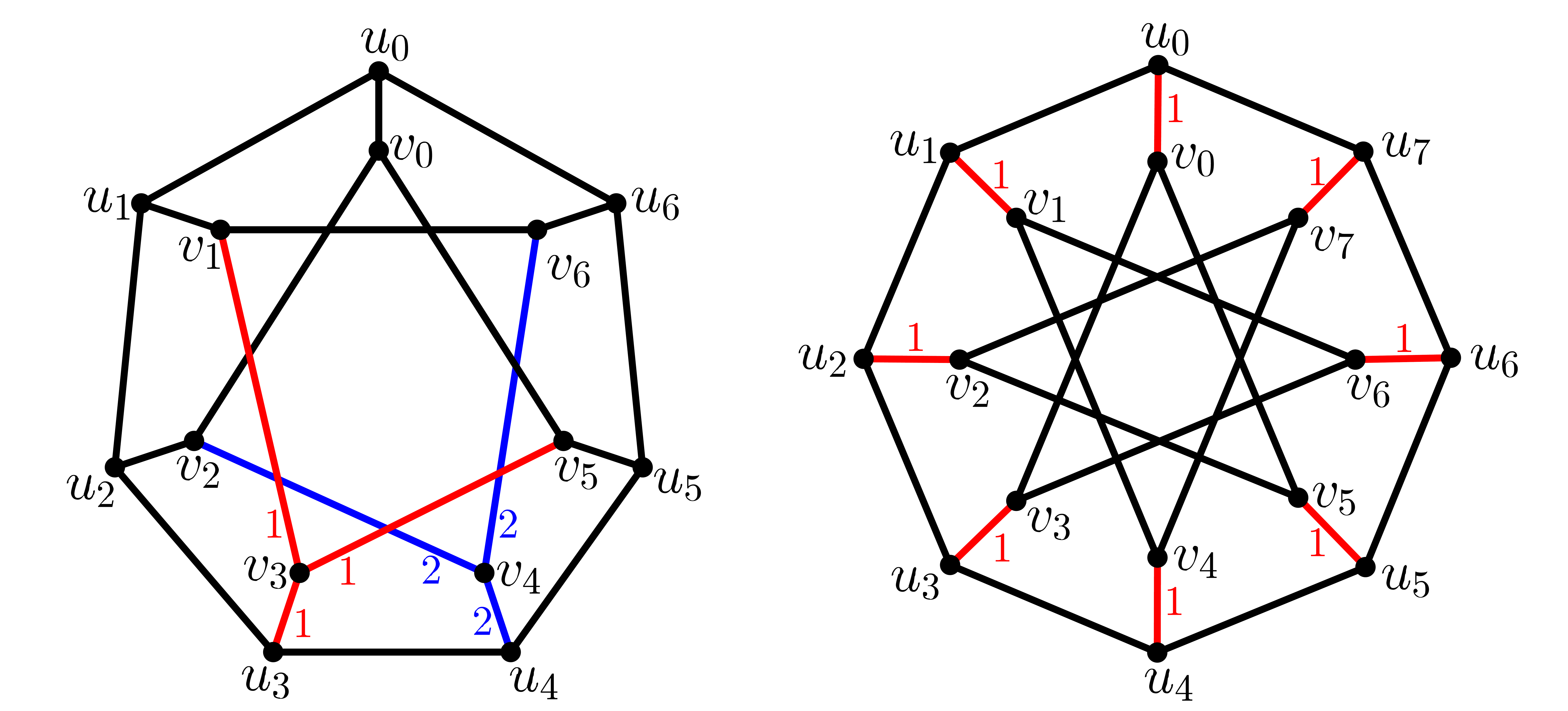}}

\vskip .1cm

$(a)$~$P_{7,2}$~~~~~~~~~~~~~~~~~~~~~~~~~~~~~~~~~~~~~~$(b)$~$P_{8,3}$

\vskip .2cm

Fig.~5 ~$P_{7,2}$ and $P_{8,3}$ have no rainbow $C_6$ under the coloring $\phi$.

\end{figure}

\begin{lemma}
\label{3.144}
$Ar(P_{12,2}, C_6)=34$.
\end{lemma}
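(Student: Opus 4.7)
The plan is to apply Lemma \ref{2.1} after first restricting to the subgraph that actually carries copies of $C_6$. The first step is to enumerate all copies of $C_6$ in $P_{12,2}$. By Proposition \ref{1.1}$(ii)$ with $n=12$, the parameter $k=2$ matches only the condition $k=\frac{n}{6}$. Moreover, inspecting the proof of Proposition \ref{1.1}$(ii)$, the cycles with $|V(C_6)\cap O_n|\in\{2,3,4,6\}$ require $k\in\{\frac{n-1}{3},\frac{n+1}{3},\frac{n-2}{2},3\}$ or $n=6$, none of which holds for $(n,k)=(12,2)$. Hence every $C_6$ lies entirely inside $I_{12}$, and since $\gcd(12,2)=2$ the inner rim splits into two vertex-disjoint $6$-cycles $G^{1}:=v_0v_2v_4v_6v_8v_{10}v_0$ and $G^{2}:=v_1v_3v_5v_7v_9v_{11}v_1$. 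Thus $\Psi=\{G^{1},G^{2}\}$ is the set of all $C_6$'s in $P_{12,2}$.

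The second step is to isolate the edges that lie in no copy of $C_6$. Let $E':=\{u_iu_{i+1},\,u_iv_i:0\le i\le 11\}$, so $|E'|=24$ and $E(G^{1})\cup E(G^{2})=E(P_{12,2})\setminus E'$. Because no $C_6$ of $P_{12,2}$ uses any edge of $E'$, these edges can each receive a distinct private color without creating a rainbow $C_6$, giving the standard reduction
\[
Ar(P_{12,2},C_6)=|E'|+Ar(P'_{12,2},C_6),
\]
where $P'_{12,2}:=P_{12,2}-E'$ is the inner rim, consisting of the two disjoint 6-cycles above.

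The third step is to compute $Ar(P'_{12,2},C_6)$ via Lemma \ref{2.1}. Since $E(G^{1})\cap E(G^{2})=\emptyset$, every edge of $P'_{12,2}$ belongs to exactly one member of $\Psi$, and so $H_{P'_{12,2},\Psi}$ consists of two isolated vertices, each carrying $6$ loops. In particular $M(H_{P'_{12,2},\Psi})=0$, whence
\[
Ar(P'_{12,2},C_6)=|E(P'_{12,2})|-|\Psi|+M(H_{P'_{12,2},\Psi})=12-2+0=10.
\]
Combining gives $Ar(P_{12,2},C_6)=24+10=34$, as desired.

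The only nontrivial step is the enumeration of $C_6$'s. Beyond the direct appeal to Proposition \ref{1.1} I would, for safety, also give a short parity argument: any $C_6$ in $P_{n,k}$ crosses an even number of spokes, so the number of spokes $s\in\{0,2,4,6\}$; adjacent spokes in a cycle are impossible (distinct spokes share no endpoints), so $s\le 3$, leaving $s\in\{0,2\}$; for $s=2$, a short check on the three outer-arc/inner-arc splits $(1,3),(2,2),(3,1)$ shows the required inner displacement $\pmod{12}$ (always even) cannot be realized as $\pm 2 \cdot (\text{integer})$ with the prescribed target, and the length-$3$ outer arc would need an inner edge of length $3$, which does not exist. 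This confirms that $\Psi$ is exactly the two inner hexagons, so no hidden $C_6$ invalidates the reduction.
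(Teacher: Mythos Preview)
Your proof is correct and follows essentially the same route as the paper: isolate the edges $E'=\{u_iu_{i+1},u_iv_i\}$ that lie in no $C_6$, reduce to the inner rim $P'_{12,2}$, and apply Lemma~\ref{2.1} with $\Psi'=\{G^1,G^2\}$ and $M(H_{P'_{12,2},\Psi'})=0$ to get $Ar(P'_{12,2},C_6)=10$, hence $Ar(P_{12,2},C_6)=24+10=34$. Your added justification (via Proposition~\ref{1.1}$(ii)$ and the spoke/arc parity check) that the only $C_6$'s are the two inner hexagons is a welcome elaboration of a step the paper leaves implicit.
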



\begin{proof} Let $P'_{12,2}=P_{12,2}-E'$, where $E'=\{u_iu_{i+1}, u_iv_i~|~0\leq
i\leq 11\}$, and let $G_i:=v_iv_{i+2}v_{i+4}v_{i+6}v_{i+8}v_{i+10}v_i$ for $0\leq
i\leq 1$. Then $\Psi'=\{G_0,G_1\}$ is the set of all $C_6$ in $P'_{n,k}$, and
then $Ar(P'_{12,2}, C_6)=Ar_{P'_{12,2}}(\Psi')$. Note that each edge in
$P'_{12,2}$ is only contained in some $G_{i}$. Thus $H_{P'_{12,2},\Psi'}$ is a
graph obtained from two isolated vertices by adding six loops at each vertex. Then $M(H_{P'_{12,2},\Psi'})=0$. By Lemma \ref{2.1}, $Ar(P'_{12,2},
C_6)=Ar_{P'_{12,2}}(\Psi')=|E(P'_{12,2})|-|\Psi'|+M(H_{P'_{12,2},\Psi'})=10$.
Since there is no $C_6$ in $P_{12,2}$ containing any edge of $E'$,
$Ar(P_{n,k},C_6)=|E'|+Ar(P'_{n,k},C_6)=34$.
\end{proof}

By Lemmas \ref{3.12}-\ref{3.144}, the proof of Theorem \ref{1.4} is completed.


\subsection{$k=3$}

In this subsection, we focus on the case that $k=3$. Then $n\ge 7$.

\begin{lemma}
\label{3.15}
$Ar(P_{8,3}, C_6)=17$.
\end{lemma}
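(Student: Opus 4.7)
First, I would enumerate all copies of $C_6$ in $P_{8,3}$. Arguing as in the proof of Proposition~\ref{1.1}, one checks that $|V(C_6)\cap O_8|\in\{2,3,4\}$ (the values $0$ and $6$ are ruled out because neither rim is a $6$-cycle here, since $\gcd(3,8)=1$), which yields three families of eight cycles each: $G^1_i=u_iu_{i+1}v_{i+1}v_{i+6}v_{i+3}v_iu_i$, $G^2_i=u_iu_{i+1}u_{i+2}v_{i+2}v_{i+5}v_iu_i$, and $G^3_i=u_iu_{i+1}u_{i+2}u_{i+3}v_{i+3}v_iu_i$ for $0\le i\le 7$, so $|\Psi|=24$. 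All three families appear because $n=8$ realises both $k=3$ and $k=(n-2)/2=3$ in Proposition~\ref{1.1}(ii).

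A key observation is that every cycle in $\Psi$ contains exactly two spokes. This gives the lower bound $Ar(P_{8,3},C_6)\ge 17$ via the construction: colour all $8$ spokes with a single colour and give the remaining $16$ edges $16$ distinct colours. Every $C_6$ then contains a monochromatic pair of spokes, so no rainbow copy exists.

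For the upper bound $Ar(P_{8,3},C_6)\le 17$, I would invoke Lemma~\ref{2.4} with $h=6$, so the task becomes showing $H_{P_{8,3},\Psi}\in\mathscr{P}_{6,24}$. The hypergraph has $24$ vertices and $24$ hyperedges; a routine double count shows each hyperedge has cardinality $6$ (so $r(H)=6$), and enumerating edge-pair types yields $s(H)=3$ (attained, for instance, by two consecutive outer edges $u_iu_{i+1}, u_{i+1}u_{i+2}$, which together lie in $G^2_i, G^3_i, G^3_{i-1}$). Since Lemma~\ref{2.2} only gives $Ar\le 24-\lceil 48/9\rceil=18$, a sharper analysis is required.

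The main obstacle is verifying $\mathscr{P}_{6,24}$ by case analysis on the partition shape $(f_1\ge\cdots\ge f_{18})$ with $\sum_i(f_i-1)=6$. If every $f_i\le 2$, inequality $(1)$ yields $\sum|L_i|\le 3\cdot 6=18<24$ immediately. The delicate cases occur when some $f_i\ge 3$: the maximum value $|L_i|=9$ is attained precisely when $\mathscr{F}_i$ equals a vertex-star $E(v)$, in which case $L_i$ is exactly the set of cycles through $v$. Here I would exploit a structural property of the Möbius--Kantor graph $P_{8,3}$---that no three of its $16$ vertices jointly meet every $C_6$---to bound $|\bigcup L_i|$ by $22$ in the critical shapes $(2,2,2)$ and $(2,2,1,1)$, the latter also using that no single edge-pair lies in three of the six ``residual'' cycles missed by the two vertex-stars. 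Partitions with a class of size $\ge 4$ admit analogous bounds on $|L|$ for multi-vertex star unions (for example, $E(v_1)\cup E(v_2)$ on antipodal vertices gives $|L|=15$), and Algorithm~\ref{2.5} supplies a systematic fallback. Combining all cases gives $H_{P_{8,3},\Psi}\in\mathscr{P}_{6,24}$, hence $Ar(P_{8,3},C_6)\le 17$ by Lemma~\ref{2.4}, matching the lower bound.
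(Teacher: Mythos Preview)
Your approach is essentially the same as the paper's: both enumerate the $24$ copies of $C_6$ (your three families agree with the paper's up to relabelling), use the all-spokes-one-colour construction for the lower bound, and obtain the upper bound via Lemma~\ref{2.4} by showing $H_{P_{8,3},\Psi}\in\mathscr{P}_{6,24}$ with $r=6$, $s=3$. The paper organises the $\mathscr{P}_{6,24}$ case analysis through an auxiliary graph $Q$ on the hyperedges in which $F_aF_b\in E(Q)$ iff $|F_a\cap F_b|=3$; this $Q$ is precisely the line graph $L(P_{8,3})$, so its triangles are exactly your vertex-stars, and the paper's ``$Q_i$ is a $K_3$'' condition and your ``$\mathscr{F}_i=E(v)$'' condition coincide---your structural claim that no three vertices of $P_{8,3}$ meet every $C_6$ is what the paper verifies (somewhat tersely) in its final paragraph.
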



\begin{proof} Let $G^1_i:=u_iu_{i+1}u_{i+2}u_{i+3}v_{i+3}v_iu_i$,
$G^2_i:=u_iu_{i+1}v_{i+1}v_{i+4}v_{i+7}u_{i+7}u_i$ and
$G^3_i:=u_iu_{i+1}v_{i+1}v_{i+6}v_{i+3}v_iu_i$ for $0\leq i\leq 7$. Then
$\Psi=\{G^1_i,G^2_i,G^3_i~|~0\leq i\leq 7\}$ is the set of all copies of $C_6$ in
$P_{8,3}$, and then $Ar(P_{8,3}, C_6)=Ar_{P_{8,3}}(\Psi)$.

Let $\phi: E(P_{8,3})\rightarrow \{1,2,\ldots,17\}$ be an edge-coloring of
$P_{8,3}$ such that $\phi(u_iv_i)=1$ for $0\leq i\leq 7$ and every other edges
having distinct colorings from $\{2,\ldots,17\}$ (see Fig.~5$(b)$). Note that
each $C_6\in \Psi$ contains two spokes, and hence $P_{8,3}$ has no rainbow $C_6$
under the coloring $\phi$. So, $Ar(P_{8,3}, C_6)\geq 17$.

Next we will show that $Ar(P_{8,3}, C_6)\leq 17$. Let
$V(H_{P_{8,3},\Psi})=\{x_i,y_i,z_i~|~0\leq i\leq 7\}$ and
$E(H_{P_{8,3},\Psi})=\{F^0_i,F^1_i,F^2_i~|~0\leq i\leq 7\}$, where $x_i$, $y_i$
and $z_i$ correspond to $G^1_i$, $G^2_i$ and $G^3_i$ in $\Psi$,
respectively, each $F^1_i$ corresponds to $u_iu_{i+1}$, $F^2_i$
corresponds to $u_iv_i$, and $F^3_i$ corresponds to
$v_iv_{i+3}$ for $0\leq i\leq 7$. Note that each edge $u_iu_{i+1}$ is contained in $G^1_i$, $G^1_{i+6}$,
$G^1_{i+7}$, $G^2_i$, $G^2_{i+1}$ and $G^3_i$; each spoke $u_iv_i$ is contained
in $G^1_i$, $G^1_{i+5}$, $G^2_{i+1}$, $G^2_{i+7}$, $G^3_{i}$ and $G^3_{i+7}$;
each edge $v_iv_{i+3}$ is contained in $G^1_i$, $G^2_{i+4}$, $G^2_{i+7}$,
$G^3_{i}$, $G^3_{i+2}$ and $G^3_{i+5}$ for $0\leq i\leq 7$. It follows that
$F^1_i=\{x_i,x_{i+6},x_{i+7},y_i,y_{i+1},z_i\}$,
$F^2_i=\{x_i,x_{i+5},y_{i+1},y_{i+7},z_i,z_{i+7}\}$ and
$F^3_i=\{x_i,y_{i+4},y_{i+7},z_i,z_{i+2},z_{i+5}\}$ for $0\leq i\leq 7$, and thus
$r(H_{P_{8,3},\Psi})=6$, $s(H_{P_{8,3},\Psi})=3$.

Now we will show that $H_{P_{8,3},\Psi}\in \mathscr{P}_{6,24}$. Suppose that $H_{P_{8,3},\Psi}\notin \mathscr{P}_{6,24}$. Then there exists a $18$-partition $\{\mathscr{F}_1,\mathscr{F}_2,\ldots,\mathscr{F}_{18}\}$ of
$E(H_{P_{8,3},\Psi})$ with $|\bigcup_{i=1}^{18}L_i|=24$. Assume that $|\mathscr{F}_1|\geq  \cdots \geq
|\mathscr{F}_{18}|\ge 1$. Then $2\leq |\mathscr{F}_1|\leq 7$ and
$\sum_{i=1}^{18}(|\mathscr{F}_i|-1)=6$.

Define a graph $Q=(V(Q), E(Q))$ (see Fig.~6) as: $V(Q)=\{F^1_i,F^2_i,F^3_i~|~0\leq
i\leq 7\}$,  and for $x,y\in V(Q)$, $xy\in E(Q)$ if and only if $|x\cap y|=3$.
The subgraph of $Q$ induced by the elements of
$\mathscr{F}_i$ is denoted by $Q_i$. Note that if $\mathscr{F}_i$ is a barreier,
then $Q_i$ is a $2$-factor of order $|\mathscr{F}_i|$.


\begin{figure}[!htb]
\centering
{\includegraphics[height=0.4\textwidth]{6}}

\vskip.1cm

Fig.~6 ~$Q$

\end{figure}

If $|\mathscr{F}_2|\leq 2$, then by $(1)$, $|\bigcup_{i=1}^{18}L_i|\le \sum_{i=1}^{18}|L_i|\leq
3|\mathscr{F}_1|+\sum_{i=2}^{18}3(|\mathscr{F}_i|-1) \leq
3\sum_{i=1}^{18}(|\mathscr{F}_i|-1)+3=21<24$, a contradiction.
So $|\mathscr{F}_2|\geq 3$, and then $3\le |\mathscr{F}_1|\leq 5$.

If $|\mathscr{F}_3|\leq 2$, then $6\le |\mathscr{F}_1|+|\mathscr{F}_2|\le 8$, and by $(1)$, $$24=\left|\bigcup_{i=1}^{18}L_i\right|\le \sum_{i=1}^{18}|L_i|\leq
3|\mathscr{F}_1|+3|\mathscr{F}_2|+\sum_{i=3}^{18}3(|\mathscr{F}_i|-1) =
3\sum_{i=1}^{18}(|\mathscr{F}_i|-1)+6=24,\eqno(2)$$ which implies
$|\mathscr{F}_1|=5$, $|\mathscr{F}_2|=3$ and $|\mathscr{F}_3|=1$, or
$|\mathscr{F}_1|=|\mathscr{F}_2|=4$ and $|\mathscr{F}_3|=1$, or $|\mathscr{F}_1|=4$, $|\mathscr{F}_2|=3$, $|\mathscr{F}_3|=2$ and $|\mathscr{F}_4|=1$, or
$|\mathscr{F}_1|=|\mathscr{F}_2|=3$, $|\mathscr{F}_3|=|\mathscr{F}_4|=2$ and
$|\mathscr{F}_5|=1$. Moreover, $\{L_1,L_2,\ldots, L_{18}\}$ is a partition of
$V(H_{P_{8,3},\Psi})$, $\mathscr{F}_1$, $\mathscr{F}_2$ are barriers. Note that
$Q$ contains no $K_4$ and $K_5$, and hence $|\mathscr{F}_1|=|\mathscr{F}_2|=3$,
$|\mathscr{F}_3|=|\mathscr{F}_4|=2$ and $|\mathscr{F}_5|=1$. By symmetry, we can
assume $\mathscr{F}_1=\{F^1_0,F^1_{7},F^2_0\}$. Then $L_1=\{x_0,x_5,x_6,x_7,y_0,y_1,y_7,z_0,z_7\}$, and then
$\mathscr{F}_2=\{F^1_3,F^1_4,F^2_4\}$ as $L_1\cap L_2=\emptyset$. So
$L_2=\{x_1,x_2,x_3,x_4,y_3,y_4,y_5,z_3,z_4\}$ and $L_3\cup
L_4=\{y_2,y_6,z_1,z_2,z_5,z_6\}$.
On the other hand, by (2), $|L_3|=|L_4|=3$. Since $x_i\notin L_3\cup
L_4$ for $0\le i\le 7$, $F_i^1F_{i+1}^1$,  $F_i^1F_i^2$, $F_i^1F_{i+1}^2$ and
$F_i^2F_{i}^3$ would be excluded from $E(Q)$. Similarly, each edge on two paths
$F_0^3F_3^3F_6^3F_1^3$, $F_4^3F_7^3F_2^3F_5^3$ would be excluded from $E(Q)$ as
$z_0,z_3,z_4,z_7\notin L_3\cup L_4$. So $\mathscr{F}_3, \mathscr{F}_4\in \{\{F_0^3,F_5^3\}, \{F_1^3,F_4^3\}\}$, and thus $y_2,y_6\notin L_3\cup L_4$, a contradiction.

Thus $|\mathscr{F}_3|\geq 3$, and so
$|\mathscr{F}_1|=|\mathscr{F}_2|=|\mathscr{F}_3|=3$ and $|\mathscr{F}_4|=1$. Then
by (1), $|L_i|\le 9$ for $1\le i\le 3$. If $Q_i$ is not a $K_3$ for some $1\le
i\le 3$, then we can check that $|L_i|\le 6$.  So we can assume $|L_1|=|L_2|=9$,
and then $Q_1,~Q_2$ are $K_3$.
If $Q_3$ is not a $K_3$, then $|L_3|=6$ and $F_i$ are barriers as $|L_1\cup L_2\cup L_3|=24$ for $i=1,2,3$,
and thus by an argument similar to the above, we can have a contradiction.
Therefore $Q_3=K_3$ and $|L_3|=9$. If $L_i\cap L_j=\emptyset$ for some
$1\le i<j\le 3$, then by an argument similar to the above, we
can have a contradiction. So $L_i\cap L_j\neq \emptyset$ for any $1\le i<j\le 3$.
Assume that $\mathscr{F}_1=\{F^1_0,F^1_{7},F^2_0\}$. Then
$L_1=\{x_0,x_5,x_6,x_7,y_0,y_1,y_7,z_0,z_7\}$, and then $\mathscr{F}_i\neq
\{F^1_3,F^1_4,F^2_4\}$ as $L_1\cap L_i=\emptyset$ for $i=2,3$. Furthermore, we can check that $|L_i\cap
L_1|=3$ for $i=2,3$. Note that $Q$ is 4-regular, and hence there exists some $x\in (L_1\cap L_2)\setminus (L_1\cap L_3)$. Thus
$|\bigcup_{i=1}^{18}L_i|\le 9+9+9-3-1=23<24$, a contradiction. Therefore, $H_{P_{8,3},\Psi}\in
\mathscr{P}_{6,24}$. By Lemma \ref{2.4}, $Ar(P_{8,3},C_6)=Ar_{P_{8,3}}(\Psi)\leq
|E(P_{8,3})|-(6+1)=17$.
\end{proof}


\begin{figure}[!htb]
\centering
{\includegraphics[height=0.35\textwidth]{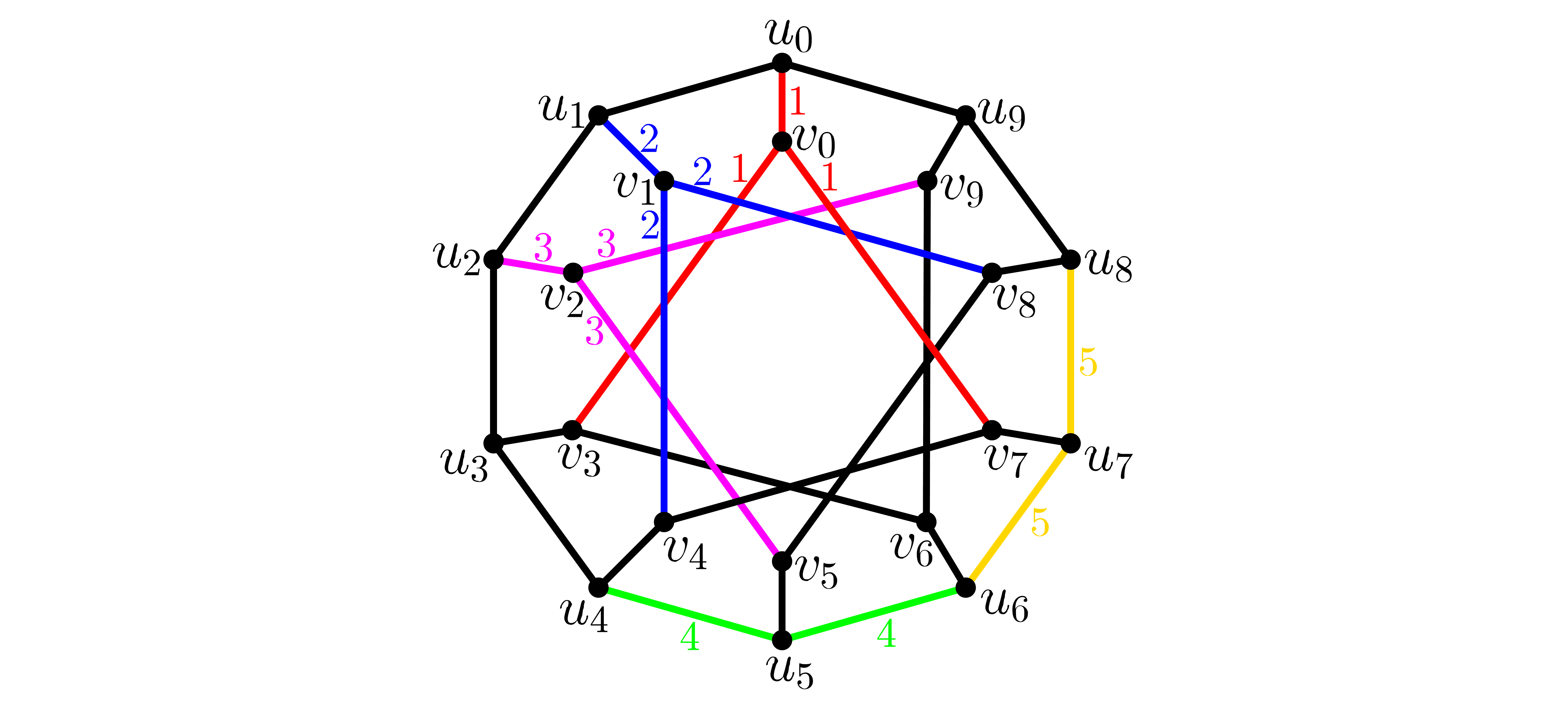}}

Fig.~7 ~$P_{10,3}$ has no rainbow $C_6$ under the coloring $\phi$.

\end{figure}


\begin{lemma}
\label{3.16}
$Ar(P_{10,3}, C_6)=22$.
\end{lemma}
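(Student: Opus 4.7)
The plan is to follow the blueprint of Lemmas \ref{3.11}--\ref{3.15}: enumerate all copies of $C_6$ in $P_{10,3}$, construct a coloring realising the lower bound, and analyse the auxiliary hypergraph $H_{P_{10,3},\Psi}$ via Lemma \ref{2.4} for the matching upper bound. For $n=10$ we have $k=3$ and also $(n-1)/3=3$, so two parameter values of Proposition \ref{1.1}(ii) apply simultaneously: every $C_6$ in $P_{10,3}$ belongs to one of two rotationally-symmetric families, the ``Type 1'' cycles $G^1_i := u_iu_{i+1}v_{i+1}v_{i+4}v_{i+7}v_iu_i$ (from $k=(n-1)/3$) and the ``Type 2'' cycles $G^2_i := u_iu_{i+1}u_{i+2}u_{i+3}v_{i+3}v_iu_i$ (from $k=3$), for $0 \le i \le 9$. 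Thus $|\Psi|=20$ and $Ar(P_{10,3},C_6)=Ar_{P_{10,3}}(\Psi)$.

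For the lower bound I would colour the four ``stars'' $T_1 = \{u_9u_0, u_0u_1, u_0v_0\}$, $T_2 = \{u_3u_4, u_4u_5, u_4v_4\}$, $T_3 = \{u_6u_7, u_7u_8, u_7v_7\}$, and $T_4 = \{u_2v_2, v_2v_5, v_2v_9\}$ each with a single colour, and give each of the remaining $18$ edges a distinct new colour, for a total of $22$. A routine check---the star at $u_p$ kills the six cycles $G^1_{p-1}, G^1_p, G^2_{p-3}, G^2_{p-2}, G^2_{p-1}, G^2_p$, while the star at $v_p$ kills $G^1_{p-1}, G^1_p, G^1_{p+3}, G^1_{p+6}, G^2_p$---shows that with $p \in \{0,4,7\}$ for the $u$-stars and $p = 2$ for the $v$-star, every Type 1 and every Type 2 cycle contains two edges of some $T_j$ and is therefore not rainbow, giving $Ar(P_{10,3},C_6) \ge 22$.

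For the upper bound I would invoke Lemma \ref{2.4}, reducing the statement to $H_{P_{10,3},\Psi} \in \mathscr{P}_{7,20}$. A direct incidence count yields $r(H_{P_{10,3},\Psi}) = 4$ (every edge of $P_{10,3}$ lies in exactly four cycles of $\Psi$) and $s(H_{P_{10,3},\Psi}) = 2$ (verified by inspecting adjacent-edge pairs at each vertex of $P_{10,3}$, adjacent inner-rim pairs, and a few non-adjacent pairs). Assuming for contradiction a $23$-partition $\{\mathscr F_1, \ldots, \mathscr F_{23}\}$ of $E(H_{P_{10,3},\Psi})$ with $\bigcup_{i=1}^{23} L_i = V(H_{P_{10,3},\Psi})$, the identity $\sum_i(|\mathscr F_i|-1)=7$ combined with $(1)$ forces the partition shape into one of only two possibilities: (A) one part of size $4$ and two of size $3$, or (B) three parts of size $3$ and one part of size $2$ (with all remaining parts singletons). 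Moreover, in each case every part of size $\ge 3$ must be a \emph{barrier}, the size-$2$ part (if present) must have $|L|=2$, and the $L_i$'s must partition $V(H_{P_{10,3},\Psi})$.

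The main obstacle is ruling out these two barrier configurations. Following the strategy of Lemma \ref{3.15}, I would introduce an auxiliary graph $Q$ on $V(Q) = E(P_{10,3})$ with $xy \in E(Q)$ iff $|F_x \cap F_y| = 2$; then a $3$-barrier of $H_{P_{10,3},\Psi}$ corresponds to a triangle of $Q$ whose three pair-intersections are pairwise disjoint $2$-subsets of $V(H_{P_{10,3},\Psi})$, and a $4$-barrier to a suitable $2$-regular $4$-uniform sub-configuration on $8$ cycles. Using the $\mathbb{Z}_{10}$-rotational symmetry of $P_{10,3}$ I would enumerate the orbit-types of $Q$-triangles (those from stars at $u_i$ or $v_i$, from three consecutive rim edges, and a handful of mixed types), compute the associated $L$-sets, and verify---by inspection or, if needed, by Algorithm \ref{2.5}---that no choice of three triangle-barriers (shape (B)) or of one $4$-barrier together with two triangle-barriers (shape (A)) can produce pairwise disjoint $L$-sets whose union is all of $V(H_{P_{10,3},\Psi})$. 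Combining the two bounds yields $Ar(P_{10,3},C_6) = 22$.
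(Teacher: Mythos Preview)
Your proposal is correct and follows essentially the same route as the paper: enumerate the two $\mathbb{Z}_{10}$-orbits of $C_6$, exhibit a $22$-colouring (yours uses four $3$-stars at $u_0,u_4,u_7,v_2$, whereas the paper uses three $3$-stars at $v_0,v_1,v_2$ together with two $2$-stars at $u_5,u_7$), and then prove $H_{P_{10,3},\Psi}\in\mathscr{P}_{7,20}$ via Lemma~\ref{2.4} by reducing, from $r=4$ and $s=2$, to the same two barrier shapes and analysing them through the auxiliary intersection graph $Q'$. The paper's upper-bound argument is only marginally more explicit---it fixes $\mathscr{F}_1=\{F^1_0,F^1_9,F^2_0\}$ by symmetry before the final ``we can check'' step---so your deferral of the last case-check to inspection or Algorithm~\ref{2.5} is in the same spirit.
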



\begin{proof} Note that $\Psi=\{ G^1_i:=u_iu_{i+1}u_{i+2}u_{i+3}v_{i+3}v_iu_i,~
G^2_i:=u_iu_{i+1}v_{i+1}v_{i+4}v_{i+7}v_iu_i~|~0\leq i\leq 9)\}$ is the set of
all copies of $C_6$ in $P_{10,3}$. Then $Ar(P_{10,3}, C_6)=Ar_{P_{10,3}}(\Psi)$.
Let $\phi: E(P_{10,3})\rightarrow \{1,2,\ldots,22\}$ be an edge-coloring of
$P_{10,3}$ defined as follows:
\begin{eqnarray}
   \phi(e)=\left\{
\begin{array}{ll}
   i+1,                          &\mbox{if}~e\in E(v_i)~\mbox{with}~0\leq i\leq
   2;       \nonumber   \\
   4,                            &\mbox{if}~e\in E(u_5)\setminus \{u_5v_5\};
   \nonumber   \\
   5,                            &\mbox{if}~e\in E(u_7)\setminus \{u_7v_7\};
   \nonumber
   \end{array}\right.
\end{eqnarray}
and every other edges obtains distinct colorings from $\{6,\ldots,22\}$ (see
Fig.~7). Note that each $C_6\in \Psi$ contains two edges of $E(v_i)$ ($0\leq
i\leq 2$) or two edges of $E(u_j)\setminus \{u_jv_j\}$ ($j\in \{5,7\}$), and
hence $P_{10,3}$ has no rainbow $C_6$ under the coloring $\phi$. So,
$Ar(P_{10,3}, C_6)\geq 22$.

Next, we will show that $Ar(P_{10,3}, C_6)\leq 22$. Let
$V(H_{P_{10,3},\Psi})=\{x_i,y_i~|~0\leq i\leq 9\}$ and
$E(H_{P_{10,3},\Psi})=\{F^1_i,F^2_i,F^3_i~|~0\leq i\leq 9\}$, where $x_i$ and
$y_i$ correspond to $G^1_i$ and $G^2_i$ in $\Psi$, respectively, each
$F^1_i$ corresponds to $u_iu_{i+1}$, $F^2_i$ to $u_iv_i$, and $F^3_i$ to $v_iv_{i+3}$ for $0\leq
i\leq 9$. Note that each edge $u_iu_{i+1}$ is contained in $G^1_i$, $G^1_{i+8}$,
$G^1_{i+9}$ and $G^2_i$, $u_iv_i$ is contained in $G^1_i$,
$G^1_{i+7}$, $G^2_{i}$ and $G^2_{i+9}$, $v_iv_{i+3}$ is contained in
$G^1_i$, $G^2_{i+3}$, $G^2_{i+6}$ and $G^2_{i+9}$ for $0\leq i\leq 9$. It follows that $F^1_i=\{x_i,x_{i+8},x_{i+9},y_i\}$,
$F^2_i=\{x_i,x_{i+7},y_i,y_{i+9}\}$ and $F^3_i=\{x_i,y_{i+3},y_{i+6},y_{i+9}\}$
for $0\leq i\leq 9$, and thus $r(H_{P_{10,3},\Psi})=4$, $s(H_{P_{10,3},\Psi})=2$

Now we will show that $H_{P_{10,3},\Psi}\in \mathscr{P}_{7,20}$. Suppose that $H_{P_{10,3},\Psi}\notin \mathscr{P}_{7,20}$. Then there exists a $23$-partition $\{\mathscr{F}_1,\mathscr{F}_2,\ldots,\mathscr{F}_{23}\}$ of
$E(H_{P_{10,3},\Psi})$ with  $|\bigcup_{i=1}^{23}L_i|=20$. Assume that $|\mathscr{F}_1|\geq \cdots \geq
|\mathscr{F}_{23}|\ge 1$. Then $2\leq |\mathscr{F}_1|\leq 8$ and
$\sum_{i=1}^{23}(|\mathscr{F}_i|-1)=7$.

Define a graph $Q'=(V(Q'), E(Q'))$ as: $V(Q')=\{F^1_i,F^2_i,F^3_i~|~0\leq
i\leq 9\}$, and for any $x,y\in V(Q')$, $xy\in E(Q')$ if and only if $|x\cap
y|=2$. Then $Q'$ is 4-regular and contains no $K_4$ (see Fig.~8). The subgraph of
$Q'$ induced by the elements of $\mathscr{F}_i$ is denoted by $Q'_i$.

If $|\mathscr{F}_3|\leq 2$, then by $(1)$, $\sum_{i=1}^{23}|L_i|\leq
2|\mathscr{F}_1|+2|\mathscr{F}_2|+\sum_{i=3}^{23}2(|\mathscr{F}_i|-1) \leq
2\sum_{i=1}^{23}(|\mathscr{F}_i|-1)+4=18<20$, a contradiction. So
$|\mathscr{F}_3|\geq 3$. Then $3\leq |\mathscr{F}_1|\leq 4$,
$|\mathscr{F}_2|=|\mathscr{F}_3|=3$ and $|\mathscr{F}_4|\leq 2$. By
$(1)$, $$20= |\bigcup_{i=1}^{23}L_i|\le \sum_{i=1}^{23}|L_i|\leq
2|\mathscr{F}_1|+2|\mathscr{F}_2|+2|\mathscr{F}_3|+\sum_{i=4}^{23}2(|\mathscr{F}_i|-1)
\leq 2\sum_{i=1}^{23}(|\mathscr{F}_i|-1)+6=20,$$ which implies
$\mathscr{F}_1,\mathscr{F}_2,\mathscr{F}_3$ are barriers and $\{L_1,L_2,\ldots,
L_{23}\}$ is a partition of $V(H_{P_{10,3},\Psi})$.

Since $Q'$ contains no $K_4$, $|\mathscr{F}_1|\neq 4$. So
$|\mathscr{F}_1|=|\mathscr{F}_2|=|\mathscr{F}_3|=3$ and $|\mathscr{F}_4|=2$. Then
each $Q'_i$ is a $K_3$ for $1\le i\le 3$. Without loss of generality, assume
$\mathscr{F}_1=\{F^1_0,F^1_9,F^2_0\}$. Then $L_1=\{x_0,x_7,x_8,x_9,y_0,y_9\}$.
Since $L_i\cap L_j=\emptyset$ for $1\le i<j\le 3$, we have
$\{F^1_3,F^1_4,F^1_5,F^1_6,F^2_4,F^2_5,F^2_6,F^3_2,F^3_5\}\subseteq
\mathscr{F}_2\cup \mathscr{F}_3$. But in any case, we can check $L_2\cap L_3\neq
\emptyset$, a contradiction. Therefore, $H_{P_{10,3},\Psi}\in
\mathscr{P}_{7,20}$. By Lemma \ref{2.4}, $Ar(P_{10,3},
C_6)=Ar_{P_{10,3}}(\Psi)\leq |E(P_{10,3})|-(7+1)=22$.
\end{proof}

\begin{figure}[!htb]
\centering
{\includegraphics[height=0.4\textwidth]{8}}

\vskip.1cm

Fig.~8 ~$Q'$

\end{figure}


\begin{lemma}
\label{3.17}
$Ar(P_{18,3}, C_6)=42$.
\end{lemma}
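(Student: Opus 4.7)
My plan is to follow the same three-step template used for Lemmas~\ref{3.15} and~\ref{3.16}: enumerate the $6$-cycles, construct an edge-coloring witnessing $Ar(P_{18,3},C_6)\ge 42$, and then apply Lemma~\ref{2.4} to get the matching upper bound. Since $n/6=3=k$, Proposition~\ref{1.1}(ii) shows that the set $\Psi$ of all $C_6$'s in $P_{18,3}$ consists of the eighteen ``outer'' cycles $G^1_i=u_iu_{i+1}u_{i+2}u_{i+3}v_{i+3}v_iu_i$ for $0\le i\le 17$ together with the three inner-rim cycles $G^2_j=v_jv_{j+3}v_{j+6}v_{j+9}v_{j+12}v_{j+15}v_j$ for $0\le j\le 2$, so $|\Psi|=21$ and $|E(P_{18,3})|=54$; the target $Ar=42$ then reduces, by Lemma~\ref{2.4}, to proving $H_{P_{18,3},\Psi}\in\mathscr{P}_{11,21}$.

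For the lower bound I would pair the $18$ outer edges into nine consecutive pairs $\{u_{2j}u_{2j+1},u_{2j+1}u_{2j+2}\}$ ($j=0,\dots,8$) and give each pair a single color, and inside each inner $6$-cycle $G^2_j$ merge two incident inner edges $v_jv_{j+3}$ and $v_{j+3}v_{j+6}$ into one color, while coloring the remaining $18$ spokes and $12$ inner edges with pairwise distinct fresh colors. This produces $9+3+18+12=42$ colors; the three outer edges of any $G^1_i$ contain a consecutive pair centered at $u_{i+1}$ (if $i$ is even) or at $u_{i+2}$ (if $i$ is odd), both odd-indexed, so that pair lies in our partition and shares a color; and every $G^2_j$ contains its merged inner pair. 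Hence no rainbow $C_6$ exists.

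For the upper bound I would set up $H=H_{P_{18,3},\Psi}$ on vertex set $\{x_i:0\le i\le 17\}\cup\{y_0,y_1,y_2\}$ with edges $F^1_i=\{x_i,x_{i-1},x_{i-2}\}$, $F^2_i=\{x_i,x_{i-3}\}$, and $F^3_i=\{x_i,y_{i\bmod 3}\}$ (indices mod $18$), giving $r(H)=3$ and $s(H)=2$. Suppose toward contradiction a $43$-partition $\{\mathscr{F}_1,\dots,\mathscr{F}_{43}\}$ of $E(H)$ with $\sum_i(|\mathscr{F}_i|-1)=11$ satisfies $\bigcup_i L_i=V(H)$. Writing $\alpha_i=|L_i\cap\{x_0,\dots,x_{17}\}|$ and $\beta_i=|L_i\cap\{y_0,y_1,y_2\}|$, the heart of the argument will be the per-part inequality
\[
\alpha_i+2\beta_i\;\le\;2(|\mathscr{F}_i|-1),
\]
which summed gives $\sum_i\alpha_i+2\sum_i\beta_i\le 22$; total coverage of all $18$ $x$'s and all $3$ $y$'s forces $\sum_i\alpha_i\ge 18$ and $\sum_i\beta_i\ge 3$, so the left side is at least $18+6=24$, a contradiction.

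The main obstacle is the per-part inequality. The case $\beta_i=0$ is immediate from formula~$(1)$. When $\beta_i\ge 1$, every $y_j\in L_i$ forces at least two $F^3$-edges of residue class $j$ to sit in $\mathscr{F}_i$, so letting $n_1,n_2,n_3$ count the outer, spoke, and inner edges of $\mathscr{F}_i$ I have $n_3\ge 2\beta_i$, and the $x$-slot count $2\alpha_i\le 3n_1+2n_2+n_3$ combined with $n_3\ge 2\beta_i$ establishes the bound whenever $n_1+2n_2+2\beta_i\ge 4$. The residual corner cases are $(n_1,n_2,\beta_i)\in\{(0,0,1),(1,0,1)\}$, which I plan to handle directly: in $(0,0,1)$ all edges of $\mathscr{F}_i$ are $F^3$'s with pairwise distinct $x$-indices, forcing $\alpha_i=0$; in $(1,0,1)$ the single $F^1_c$ covers indices $\{c,c-1,c-2\}$ of three distinct residues mod $3$, so at most one of the two $F^3$'s of the $y_j$-relevant residue can match an index in $F^1_c$, which gives $\alpha_i\le\min(3,n_3)$ and closes the estimate.
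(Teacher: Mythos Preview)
Your proposal is correct and follows essentially the same approach as the paper: enumerate $\Psi$, give a $42$-coloring that pairs consecutive outer-rim edges and merges two adjacent inner-rim edges in each $G^2_j$, build $H_{P_{18,3},\Psi}$, and prove $H_{P_{18,3},\Psi}\in\mathscr{P}_{11,21}$ via Lemma~\ref{2.4}. The only real difference is in the packaging of the upper-bound counting: the paper argues directly that each part $\mathscr{F}_{i_j}$ containing some $y_j$ loses at least one unit from the generic bound $|L_{i_j}|\le 2(|\mathscr{F}_{i_j}|-1)$ (because two $F^3$-edges share only $y_j$), obtaining $\sum|L_i|\le 22-2=20<21$; you instead prove the single per-part weighted inequality $\alpha_i+2\beta_i\le 2(|\mathscr{F}_i|-1)$ and sum to get the sharper numerical contradiction $24\le 22$. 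Your formulation is a bit cleaner and avoids the case split on whether distinct $y_j$'s land in the same part, but it encodes the same underlying observation that covering the three inner-rim-cycle vertices $y_0,y_1,y_2$ through the rank-$2$ edges $F^3_i$ is disproportionately expensive.
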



\begin{proof} Note that $\Psi=\{ G^1_i, G^2_j~|~0\leq i\leq 17,~0\leq j\leq 2\}$
is the set of all copies of $C_6$ in $P_{18,3}$, where $
G^1_i:=u_iu_{i+1}u_{i+2}u_{i+3}v_{i+3}v_iu_i,
G^2_j:=v_jv_{j+3}v_{j+6}v_{j+9}v_{j+12}v_{j+15}v_j$. Then $Ar(P_{18,3},
C_6)=Ar_{P_{18,3}}(\Psi)$.

Let $\phi: E(P_{18,3})\rightarrow \{1,2, \ldots, 42\}$ be an edge-coloring of
$P_{18,3}$ defined as follows:
\begin{eqnarray}
   \phi(e)=\left\{
\begin{array}{ll}
   i+1,                             &\mbox{if}~e\in
   \{u_{2i}u_{2i+1},u_{2i+1}u_{2i+2}\},~0\leq i\leq 8;              \nonumber
   \\
   j+10,                            &\mbox{if}~e\in
   \{v_{j+3}v_j,v_jv_{j-3}\},~0\leq j\leq 2;
   \nonumber
   \end{array}\right.
\end{eqnarray}
and every other edges obtains distinct colorings from $\{13,\ldots,42\}$ (see
Fig.~9). Note that each $C_6\in \Psi$ contains two edges
$u_{2i}u_{2i+1},u_{2i+1}u_{2i+2}$ for some $0\leq i\leq 8$, or two edges
$v_{j+3}v_j,v_jv_{j-3}$ for some $0\leq j\leq 2$, and hence $P_{18,3}$ has no
rainbow $C_6$ under the coloring $\phi$. So, $Ar(P_{18,3}, C_6)\geq 42$.

Next, we will show that $Ar(P_{18,3}, C_6)\leq 42$. Let $V(H_{P_{18,3},\Psi})=\{x_i,y_j~|~0\leq i\leq 17,~0\leq j\leq 2\}$ and
$E(H_{P_{18,3}),\Psi})=\{F^1_i,F^2_i,F^3_i~|~0\leq i\leq 17\}$, where $x_i$ and
$y_j$ correspond to $G^1_i$ and $G^2_j$ in $\Psi$, respectively, each $F^1_i$
corresponds to $u_iu_{i+1}$, $F^2_i$ to  $u_iv_i$, and $F^3_i$ to $v_iv_{i+3}$ for $0\leq i\leq
17$. Note that each edge $u_iu_{i+1}$ is contained in $G^1_i$, $G^1_{i+16}$ and
$G^1_{i+17}$, $u_iv_i$ is contained in $G^1_i$ and $G^1_{i+15}$, $v_{3i+t}v_{3i+t+3}$ is contained in $G^1_{3i+t}$ and $G^2_t$ for $0\leq
i\leq 17$ and $t=\lfloor \frac{i}{6} \rfloor$. Hence $F^1_i=\{x_i,x_{i+16},x_{i+17}\}$, $F^2_i=\{x_i,x_{i+15}\}$
and $F^3_i=\{x_{3i+t},y_{t}\}$ for $0\leq i\leq 17$ and $t=\lfloor \frac{i}{6} \rfloor$, and thus
$r(H_{P_{18,3},\Psi})=3$, $s(H_{P_{18,3},\Psi})=2$.

Now we will show that $H_{P_{18,3},\Psi}\in \mathscr{P}_{11,21}$. Suppose that $H_{P_{18,3},\Psi}\notin \mathscr{P}_{11,21}$. Then there exists a $43$-partition
$\{\mathscr{F}_1,\mathscr{F}_2,\ldots,\mathscr{F}_{43}\}$ of
$E(H_{P_{18,3},\Psi})$ with $
|\bigcup_{i=1}^{43}L_i|=21$, and then $
\bigcup_{i=1}^{43}L_i=V(H_{P_{18,3},\Psi})$. Note that $\sum_{i=1}^{43}(|\mathscr{F}_i|-1)=11$. By $(1)$, $|L_i|\leq 2(|\mathscr{F}_i|-1)$ for $1\le i\le
43$.
 Assume $y_j\in L_{i_j}$ for $j=0,1,2$. Note that $s(H_{P_{18,3},\Psi})=2$ and $|F^3_i\cap F^3_j|\le 1$ for $0\le i<j\le 17$,
and hence, by (1) \begin{eqnarray}
   |L_{i_j}|\leq \left\{
\begin{array}{ll}
   \lfloor \frac{3|\mathscr{F}_{i_j}|}{2} \rfloor-1\le
   2(|\mathscr{F}_{i_j}|-1)-1,               &\mbox{if}~|\mathscr{F}_{i_j}|\geq
   3;           \nonumber    \\
   1,
   &\mbox{if}~|\mathscr{F}_{i_j}|=2.               \nonumber
   \end{array}\right.
\end{eqnarray} Moreover, if $i_t=i_j$ for some $0\le t<j\le 2$, then
$|\mathscr{F}_{i_j}|\geq 4$ and $ |L_{i_j}|\leq  2(|\mathscr{F}_{i_j}|-1)-2$.
So $\sum_{i=1}^{43}|L_i|\leq 2\sum_{i=1}^{43}(|\mathscr{F}_i|-1)-2=20<21$, a
contradiction. Therefore, $H_{P_{18,3},\Psi}\in \mathscr{P}_{11,21}$. By Lemma
\ref{2.4}, $Ar(P_{18,3}, C_6)=Ar_{P_{18,3}}(\Psi)\leq |E(P_{18,3})|-(11+1)=42$.
\end{proof}


\begin{figure}[!htb]
\centering
{\includegraphics[height=0.5\textwidth]{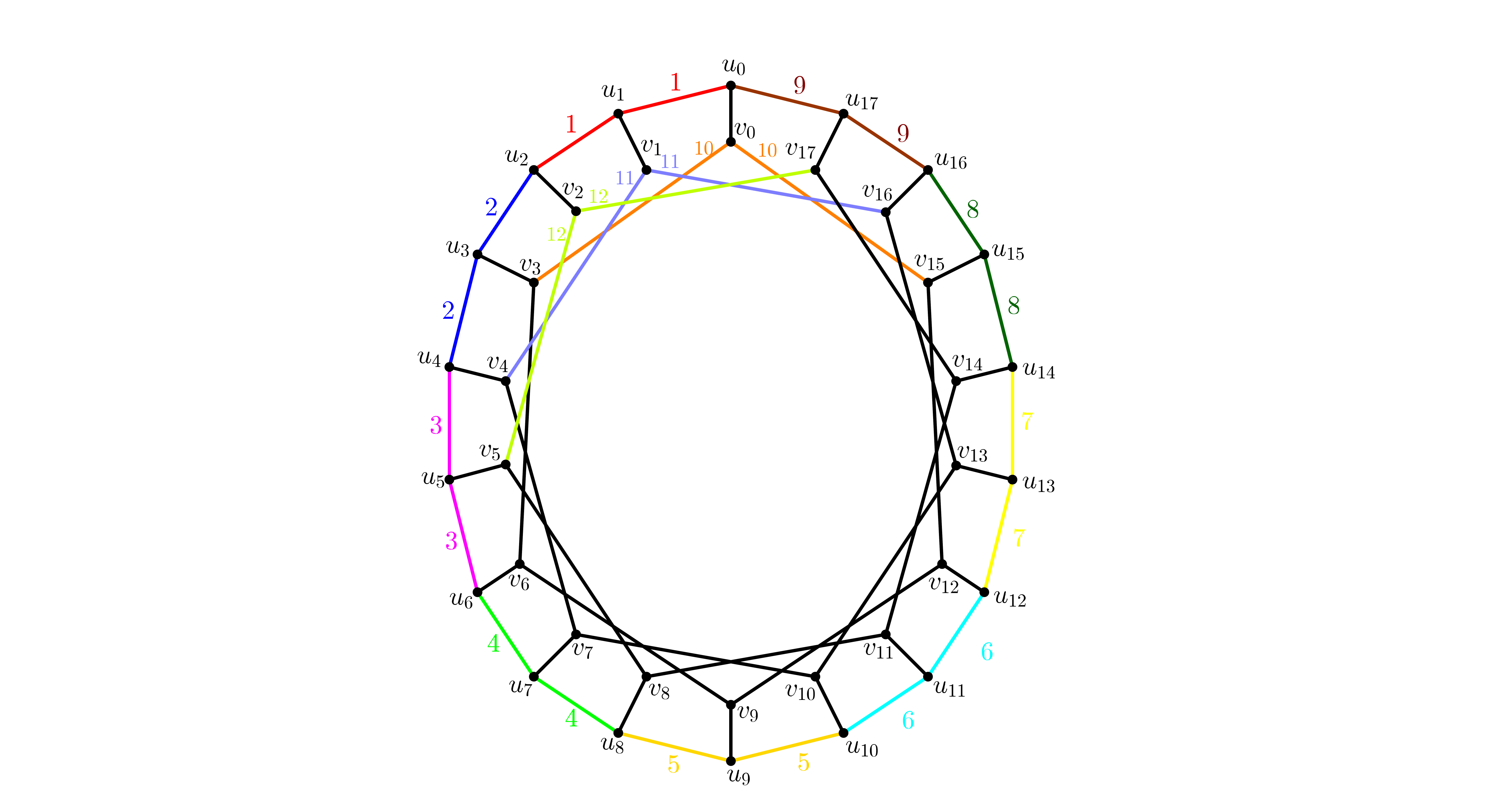}}

Fig.~9 ~$P_{18,3}$ has no rainbow $C_6$ under the coloring $\phi$.

\end{figure}


\begin{lemma}
\label{3.18}
$Ar(P_{n,3}, C_6)=\lfloor \frac{5n}{2} \rfloor$ for $n\geq 7$ and $n\notin
\{8,10,18\}$.
\end{lemma}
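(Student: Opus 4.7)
The first step is to identify the set $\Psi$ of all $C_6$ subgraphs of $P_{n,3}$. Revisiting the case analysis in the proof of Proposition~\ref{1.1}(ii), each value $|V(C_6)\cap O_n|\in\{0,2,3,6\}$ forces $n\in\{6,8,10,18\}$ when $k=3$, while $|V(C_6)\cap O_n|=4$ yields precisely the family $G_i := u_iu_{i+1}u_{i+2}u_{i+3}v_{i+3}v_iu_i$. Under the hypothesis $n\ge 7$ and $n\notin\{8,10,18\}$, therefore $\Psi=\{G_i:0\le i\le n-1\}$ and $Ar(P_{n,3},C_6)=Ar_{P_{n,3}}(\Psi)$.

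For the lower bound $Ar(P_{n,3},C_6)\ge\lfloor 5n/2\rfloor$, my plan is to give an explicit coloring. I would set $\phi(u_{2i}u_{2i+1})=\phi(u_{2i+1}u_{2i+2})$ for $0\le i\le \lfloor n/2\rfloor-1$ and assign every remaining edge its own color. When $n$ is even, this yields $5n/2$ colors, and since among the three consecutive outer indices $\{j,j+1,j+2\}$ of $G_j$ the parity of $j$ always places one such paired pair inside, no $G_j$ is rainbow. When $n$ is odd, the edge $u_{n-1}u_0$ stays unpaired; the only $G_j$ whose outer-edge indices miss every paired pair is $G_{n-2}$, with outer indices $\{n-2,n-1,0\}$. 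I would fix this by also imposing $\phi(u_{n-2}v_{n-2})=\phi(u_1v_1)$, which are the two spokes of $G_{n-2}$; this costs one more color and produces in total $3n-(n-1)/2-1=(5n-1)/2=\lfloor 5n/2\rfloor$ colors.

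For the upper bound, I would apply Lemma~\ref{2.4} with $h=\lceil n/2\rceil-1$, the unique value making $3n-(h+1)=\lfloor 5n/2\rfloor$. Describing the auxiliary hypergraph: $V(H_{P_{n,3},\Psi})=\{x_0,\ldots,x_{n-1}\}$ with $x_i$ corresponding to $G_i$. Since $u_iu_{i+1}$ lies in exactly $G_i,G_{i-1},G_{i-2}$, each spoke $u_iv_i$ lies in $G_i,G_{i-3}$, and each inner edge $v_iv_{i+3}$ lies only in $G_i$, the hyperedges take the form $F_i^1=\{x_{i-2},x_{i-1},x_i\}$, $F_i^2=\{x_{i-3},x_i\}$, $F_i^3=\{x_i\}$. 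Consequently $r(H_{P_{n,3},\Psi})=3$ and $s(H_{P_{n,3},\Psi})=2$, the latter realized by $|F_i^1\cap F_{i+1}^1|=2$.

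The heart of the upper bound is the observation that with $r=3$ and $s=2$, inequality~(1) always gives $|L_i|\le 2(|\mathscr{F}_i|-1)$: this bound equals $s=2$ when $|\mathscr{F}_i|=2$, equals $\lfloor 3|\mathscr{F}_i|/2\rfloor\le 2(|\mathscr{F}_i|-1)$ when $|\mathscr{F}_i|\ge 3$, and is trivially $0$ when $|\mathscr{F}_i|=1$. Consequently, for any $(3n-h)$-partition of $E(H_{P_{n,3},\Psi})$,
\[
\Bigl|\bigcup_{i=1}^{3n-h}L_i\Bigr|\le\sum_{i=1}^{3n-h}|L_i|\le 2\sum_{i=1}^{3n-h}(|\mathscr{F}_i|-1)=2h\le n-1<n,
\]
so $H_{P_{n,3},\Psi}\in\mathscr{P}_{h,n}$ and Lemma~\ref{2.4} delivers $Ar(P_{n,3},C_6)\le 3n-(h+1)=\lfloor 5n/2\rfloor$. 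The main potential obstacle is the odd-$n$ case of the lower bound, where the outer-edge pairing alone leaves $G_{n-2}$ rainbow; I expect the additional spoke identification to be the only adjustment needed, after which the rest is routine.
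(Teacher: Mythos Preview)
Your argument is correct. The lower bound is essentially the paper's: both pair consecutive outer edges $u_{2i}u_{2i+1},u_{2i+1}u_{2i+2}$; for odd $n$ the paper handles the leftover cycle by giving $u_{n-1}u_0$ the same color $\lfloor n/2\rfloor$ as the last pair $u_{n-3}u_{n-2},u_{n-2}u_{n-1}$ (so three outer edges share that color), whereas you instead identify the two spokes $u_{n-2}v_{n-2}$ and $u_1v_1$ of $G_{n-2}$. Either patch costs exactly one color and yields $\lfloor 5n/2\rfloor$.

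The upper bound is where you genuinely diverge. The paper invokes Lemma~\ref{2.2} directly, asserting $r(H_{P_{n,3},\Psi})=3$ and $s(H_{P_{n,3},\Psi})=1$ to get $3n-\lceil n/2\rceil$. You correctly compute $s=2$ (since $|F_i^1\cap F_{i+1}^1|=2$; the paper's $s=1$ appears to be a slip), and with $s=2$ Lemma~\ref{2.2} would only give $3n-\lceil 2n/5\rceil$, which is too weak. Your route through Lemma~\ref{2.4} repairs this: the uniform estimate $|L_i|\le 2(|\mathscr{F}_i|-1)$, valid whenever $r\le 3$ and $s\le 2$, yields $\sum_i|L_i|\le 2h\le n-1<n$ for $h=\lceil n/2\rceil-1$, so $H_{P_{n,3},\Psi}\in\mathscr{P}_{h,n}$ and the desired bound follows. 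Thus your upper-bound argument is the one that actually goes through once the correct value of $s$ is used.
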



\begin{proof} Let $G_i:=u_iu_{i+1}u_{i+2}u_{i+3}v_{i+3}v_iu_i$ for $0\leq i\leq
n-1$. Then $\Psi=\{ G_i~|~0\leq i\leq n-1\}$ is the set of all copies of $C_6$ in
$P_{n,3}$, and then $Ar(P_{n,3}, C_6)=Ar_{P_{n,3}}(\Psi)$.

Let $\phi: E(P_{n,3})\rightarrow \{1,2, \ldots, \lfloor \frac{5n}{2} \rfloor\}$
be an edge-coloring of $P_{n,3}$ defined as follows:
\begin{eqnarray}
   \phi(e)=\left\{
\begin{array}{ll}
   i+1,                             &\mbox{if}~e\in
   \{u_{2i}u_{2i+1},u_{2i+1}u_{2i+2}\},~0\leq i\leq \lfloor \frac{n}{2}
   \rfloor-1;     \nonumber   \\
   \lfloor \frac{n}{2} \rfloor,     &\mbox{if}~e=u_0u_{n-1}~\mbox{and}~n\equiv
   1\pmod 2,
   \nonumber
   \end{array}\right.
\end{eqnarray}
and every other edges obtains distinct colorings from $\{\lfloor \frac{n}{2}
\rfloor+1,\ldots,\lfloor \frac{5n}{2} \rfloor\}$. Note that if $n\equiv 0\pmod
2$, then each $C_6\in \Psi$ contains two edges $u_{2i}u_{2i+1},u_{2i+1}u_{2i+2}$
for some $0\leq i\leq \lfloor \frac{n}{2} \rfloor-1$; and if $n\equiv 1\pmod 2$,
then each $C_6\in \Psi$ contains two edges $u_{2i}u_{2i+1},u_{2i+1}u_{2i+2}$ for
some $0\leq i\leq \lfloor \frac{n}{2} \rfloor-2$ or two of
$\{u_{n-3}u_{n-2},u_{n-2}u_{n-1},u_0u_{n-1}\}$, and hence $P_{n,3}$ has no
rainbow $C_6$ under the coloring $\phi$. So, $Ar(P_{n,3}, C_6)\geq \lfloor
\frac{5n}{2} \rfloor$.

Next, we will show that $Ar(P_{n,3}, C_6)\leq \lfloor
\frac{5n}{2} \rfloor$. Note that each edge $u_iu_{i+1}$ is contained in $G_i$, $G_{i+n-2}$ and
$G_{i+n-1}$, $u_iv_i$ is contained in $G_i$ and $G_{i+n-3}$,
$v_iv_{i+3}$ is contained in $G_i$ for $0\leq i\leq n-1$. Then
$V(H_{P_{n,3},\Psi})=\{x_i~|~0\leq i\leq n-1\}$ and
$E(H_{P_{n,3},\Psi})=\{F_0,\ldots,F_{3n-1}\}$ with
$F_i=\{x_i,x_{i+n-2},x_{i+n-1}\}$, $F_{i+n}=\{x_i,x_{i+n-3}\}$ and
$F_{i+2n}=\{x_i\}$ for $0\leq i\leq n-1$. Thus $r(H_{P_{n,3},\Psi})=3$ and
$s(H_{P_{n,3},\Psi})=1$. By Lemma \ref{2.2}, we have $Ar(P_{n,3},
C_6)=Ar_{P_{n,3}}(\Psi)\leq |E(P_{n,3})|-\lceil \frac{2|\Psi|}{3+1}
\rceil=3n-\lceil \frac{n}{2} \rceil=\lfloor \frac{5n}{2} \rfloor$.
\end{proof}

By Lemmas \ref{3.15}-\ref{3.18}, the proof of Theorem \ref{1.5} is completed.


\subsection{$k\ge 4$}

In this subsection, $k\in \{\frac{n}{6},\frac{n-1}{3},\frac{n+1}{3}\}$ by
Proposition \ref{1.1}(ii).

\begin{lemma}
\label{3.19}
$Ar(P_{n,k}, C_6)=\frac{17n}{6}$ for $k=\frac{n}{6}\ge 4$.
\end{lemma}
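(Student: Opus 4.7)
The plan is to mimic the strategy of Lemma \ref{3.6}: show that, for $k=\frac{n}{6}\ge 4$, every $C_6$ in $P_{n,k}$ lies inside the inner rim, then split off the outer edges and spokes and apply Lemma \ref{2.1} to the inner rim.

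First I would enumerate the copies of $C_6$ in $P_{n,k}$ when $k=\frac{n}{6}\ge 4$. Since $k\ge 4$, we have $k\notin\{1,3\}$, and because $k=\frac{n}{6}$ one checks that none of the equalities $k=\frac{n-1}{3}$, $k=\frac{n+1}{3}$, $k=\frac{n-2}{2}$ can hold (each forces a tiny value of $n$). So by the case analysis inside the proof of Proposition \ref{1.1}(ii), the only $6$-cycles are the inner-rim cycles
\[
G_i := v_iv_{i+k}v_{i+2k}v_{i+3k}v_{i+4k}v_{i+5k}v_i,\qquad 0\le i\le k-1,
\]
and in particular no outer-rim edge $u_iu_{i+1}$ and no spoke $u_iv_i$ belongs to any $C_6$.

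Next, set $E':=\{u_iu_{i+1},u_iv_i:0\le i\le n-1\}$ and $P'_{n,k}:=P_{n,k}-E'$, so that $P'_{n,k}$ is exactly the inner rim. Since no edge of $E'$ lies on any $C_6$ of $P_{n,k}$, one has the elementary identity
\[
Ar(P_{n,k},C_6)=|E'|+Ar(P'_{n,k},C_6)=2n+Ar(P'_{n,k},C_6).
\]
Thus it remains to compute $Ar(P'_{n,k},C_6)$.

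For that I would apply Lemma \ref{2.1} with $\Psi'=\{G_i:0\le i\le k-1\}$, which is the full set of $C_6$'s of $P'_{n,k}$. Because $k=\frac{n}{6}$ and the $G_i$ are pairwise edge-disjoint (each inner-rim edge $v_jv_{j+k}$ lies in exactly one $G_i$, namely the one with $i\equiv j\pmod k$), the associated hypergraph $H_{P'_{n,k},\Psi'}$ consists of $k$ isolated vertices each carrying $6$ loops, so $M(H_{P'_{n,k},\Psi'})=0$. Lemma \ref{2.1} then gives
\[
Ar(P'_{n,k},C_6)=|E(P'_{n,k})|-|\Psi'|+M(H_{P'_{n,k},\Psi'})=n-\tfrac{n}{6}+0=\tfrac{5n}{6},
\]
and combining with the previous identity yields $Ar(P_{n,k},C_6)=2n+\tfrac{5n}{6}=\tfrac{17n}{6}$, as required.

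There is no real obstacle here; the only point that warrants care is the first step, confirming that when $k=\frac{n}{6}\ge 4$ no other configuration from Proposition \ref{1.1}(ii) can produce a $C_6$ (so that the outer rim and spokes can safely be discarded). Once that is verified, the proof is a direct copy of the template used for Lemma \ref{3.6}.
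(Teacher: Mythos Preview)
Your proposal is correct and follows essentially the same argument as the paper: remove the outer rim and spokes, identify the inner-rim $6$-cycles $G_i$ ($0\le i\le k-1$) as the only copies of $C_6$, and apply Lemma~\ref{2.1} to the resulting hypergraph of $k$ isolated vertices with six loops each. Your version is in fact slightly more careful, since you explicitly verify that for $k=\frac{n}{6}\ge 4$ none of the other configurations in Proposition~\ref{1.1}(ii) can occur.
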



\begin{proof}  Denote $P'_{n,k}=P_{n,k}-E'$, where $E'=\{u_iu_{i+1}, u_iv_i~|~0\leq
i\leq \frac{n}{6}-1\}$. Let $G_i:=v_iv_{i+k}v_{i+2k}v_{i+3k}v_{i+4k}v_{i+5k}v_i$
for $0\leq i\leq \frac{n}{6}-1$. Then $\Psi'=\{G_i~|~0\leq i\leq \frac{n}{6}-1\}$
is the set of all copies of $C_6$ in $P'_{n,k}$, and $Ar(P'_{n,k},
C_6)=Ar_{P'_{n,k}}(\Psi')$. Note that each edge in $P'_{n,k}$ is only contained
in some $G_{i}$. Thus $H_{P'_{n,k},\Psi'}$ is a graph obtained from $\frac{n}{6}$
isolated vertices by adding six loops on each isolated vertex. Then
$M(H_{P'_{n,k},\Psi'})=0$. By Lemma \ref{2.1}, we have $Ar(P'_{n,k},
C_6)=Ar_{P'_{n,k}}(\Psi')=|E(P'_{n,k})|-|\Psi'|+M(H_{P'_{n,k},\Psi'})=\frac{5n}{6}$
for $k=\frac{n}{6}\ge 4$. Since there is no copy of $C_6$ in $P_{n,k}$ containing
any edge of $F$, $Ar(P_{n,k},C_6)=|E'|+Ar(P'_{n,k},C_6)=\frac{17n}{6}$.
\end{proof}

\begin{figure}[!htb]
\centering
{\includegraphics[height=0.35\textwidth]{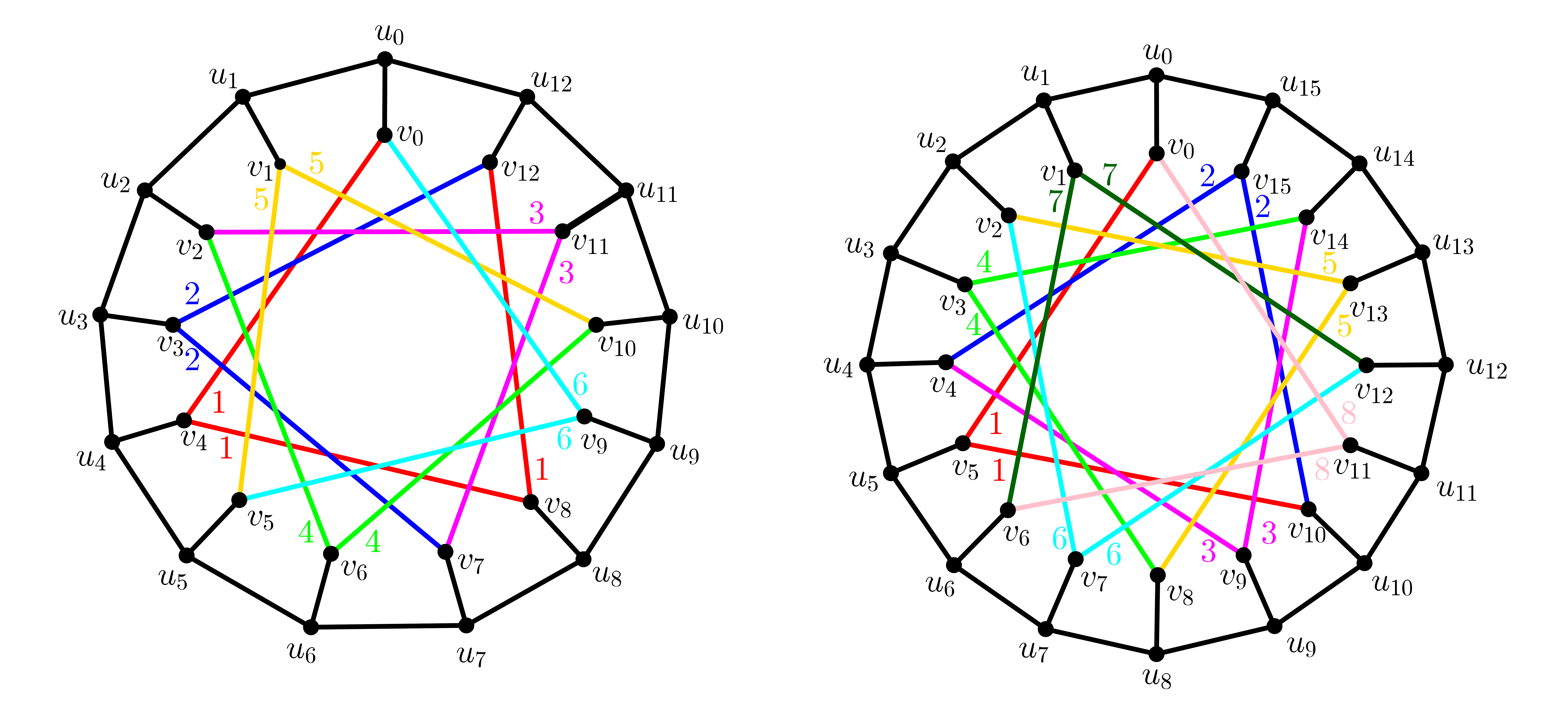}}

$(a)$~$P_{13,4}$~~~~~~~~~~~~~~~~~~~~~~~~~~~~~~~~~~~~~~$(b)$~$P_{16,5}$

\vskip .1cm

Fig.~10 ~Illustrations of $P_{13,4}$ and $P_{16,5}$ having no rainbow $C_6$ under
$\phi$.

\end{figure}

\begin{lemma}
\label{3.20}
$Ar(P_{n,k}, C_6)=\lfloor \frac{5n}{2} \rfloor$ for $k\in
\{\frac{n-1}{3},\frac{n+1}{3}\}$.
\end{lemma}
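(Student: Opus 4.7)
My plan is to follow the uniform hypergraph template used throughout this paper, handling both $k=\tfrac{n-1}{3}$ and $k=\tfrac{n+1}{3}$ at once. By Proposition~\ref{1.1}(ii) and its proof (the hypothesis $k\ge 4$ rules out every other case of $|V(C_6)\cap O_n|$), the set $\Psi$ of all $C_6$ in $P_{n,k}$ consists of the $n$ cycles $G_i=u_iu_{i+1}v_{i+1}v_{i+k+1}v_{i+2k+1}v_iu_i$ if $k=\tfrac{n-1}{3}$ (using $3k+1\equiv 0\pmod n$), and $G_i=u_iu_{i+1}v_{i+1}v_{i+1-k}v_{i+1-2k}v_iu_i$ if $k=\tfrac{n+1}{3}$ (using $3k-1\equiv 0\pmod n$), for $0\le i\le n-1$. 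In either case each $G_i$ uses exactly one outer rim edge, two spokes and three inner rim edges.

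Setting $V(H_{P_{n,k},\Psi})=\{x_i:0\le i\le n-1\}$, outer rim edges give loops, spokes give rank-$2$ hyperedges $\{x_i,x_{i-1}\}$, and inner rim edges $v_av_{a+k}$ give rank-$3$ hyperedges (namely $\{x_{a-1},x_{a-k-1},x_{a-2k-1}\}$ or $\{x_a,x_{a+k-1},x_{a+2k-1}\}$ in the two cases). A short calculation using $3k\equiv\mp 1\pmod n$ gives $r(H_{P_{n,k},\Psi})=3$ and $s(H_{P_{n,k},\Psi})=2$, where the maximum $s=2$ is attained precisely when two inner rim edges are consecutive on the inner rim $n$-cycle, and in that case the two shared $G$-cycles are easy to identify.

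For the lower bound I would use that the inner rim is a single $n$-cycle $v_0v_kv_{2k}\cdots v_{(n-1)k}v_0$ (since $\gcd(n,k)=1$). When $n$ is even, assign one color to each of the $n/2$ pairs $\{v_{2tk}v_{(2t+1)k},\,v_{(2t+1)k}v_{(2t+2)k}\}$, $0\le t\le n/2-1$, and distinct colors to the remaining edges; the killed cycle indices sweep through all of $\mathbb{Z}_n$ by $\gcd(n,k)=1$, so every $C_6$ is non-rainbow and exactly $\tfrac{5n}{2}$ colors are used. When $n$ is odd, the analogous pairing for $0\le t\le (n-3)/2$ leaves one inner rim edge $e^\ast$ unmerged and exactly one cycle $G_{i_0}$ undestroyed; a short modular check shows $e^\ast\in E(G_{i_0})$, and merging $e^\ast$ with any spoke of $G_{i_0}$ (no spoke has yet been used) destroys $G_{i_0}$ and yields $3n-(n+1)/2=\lfloor 5n/2\rfloor$ colors.

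For the upper bound, set $h=\lceil n/2\rceil-1$ and show $H_{P_{n,k},\Psi}\in\mathscr{P}_{h,n}$. For any $(3n-h)$-partition of the hyperedges, inequality~$(1)$ with $r=3,s=2$ gives $|L_i|\le 2(|\mathscr{F}_i|-1)$ for each $i$: when $|\mathscr{F}_i|=2$ this is $|L_i|\le s=2$, and when $|\mathscr{F}_i|\ge 3$ it reduces to $\lfloor 3|\mathscr{F}_i|/2\rfloor\le 2(|\mathscr{F}_i|-1)$, which holds for every $|\mathscr{F}_i|\ge 3$ (with equality at $|\mathscr{F}_i|\in\{3,4\}$). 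Summing,
\begin{equation*}
\Big|\bigcup_{i=1}^{3n-h}L_i\Big|\le \sum_{i=1}^{3n-h}|L_i|\le 2\sum_{i=1}^{3n-h}(|\mathscr{F}_i|-1)=2h\le n-1<n,
\end{equation*}
so $H_{P_{n,k},\Psi}\in\mathscr{P}_{h,n}$ and Lemma~\ref{2.4} gives $Ar(P_{n,k},C_6)\le 3n-(h+1)=\lfloor 5n/2\rfloor$. The main obstacle is the odd-$n$ step of the lower bound: one must verify that the single leftover inner rim edge lies in the unique undestroyed cycle, which comes down to a careful modular computation using $3k\equiv\mp 1\pmod n$.
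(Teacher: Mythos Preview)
Your argument is correct, and in fact your upper bound repairs a genuine slip in the paper. The paper computes $r(H_{P_{n,k},\Psi})=3$, asserts $s(H_{P_{n,k},\Psi})=1$, and then invokes Lemma~\ref{2.2} to get $Ar(P_{n,k},C_6)\le 3n-\lceil n/2\rceil$. But the paper's own formulas for the rank-$3$ hyperedges already show $s=2$: for $k=\tfrac{n-1}{3}$ one has $F_{i+2n}=\{x_{i+k},x_{i+2k},x_{i-1}\}$ and $F_{(i+k)+2n}=\{x_{i+2k},x_{i+3k},x_{i+k-1}\}=\{x_{i+2k},x_{i-1},x_{i+k-1}\}$, whose intersection is $\{x_{i+2k},x_{i-1}\}$ (and similarly for $k=\tfrac{n+1}{3}$). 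With the correct value $s=2$, Lemma~\ref{2.2} only yields $3n-\lceil 2n/5\rceil$, which is not tight. Your route via Lemma~\ref{2.4} and the bound $|L_i|\le 2(|\mathscr{F}_i|-1)$ (valid for $r=3$, $s=2$ in all regimes $|\mathscr{F}_i|=1,2,\ge 3$) is exactly what is needed, and gives $\sum|L_i|\le 2h\le n-1<n$, hence $H_{P_{n,k},\Psi}\in\mathscr{P}_{h,n}$ with $h=\lceil n/2\rceil-1$.

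For the lower bound the two constructions differ only cosmetically. The paper handles odd $n$ by merging three consecutive inner-rim edges into a single color class and then pairing the rest, while you pair $n-1$ inner-rim edges and merge the leftover edge $e^\ast$ with a spoke of the unique surviving cycle $G_{i_0}$. Your ``obstacle'' is easily resolved without modular arithmetic: each $G_i$ uses three consecutive edges of the inner-rim $n$-cycle, so pairing edges at positions $(0,1),(2,3),\ldots,(n-3,n-2)$ kills every cycle whose window starts at a position in $\{-1,0,1,\ldots,n-3\}$, leaving precisely the cycle whose window occupies positions $n-2,n-1,0$; this cycle contains $e^\ast$ (position $n-1$) as claimed. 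The paper's variant avoids this check at the cost of a slightly less uniform construction.
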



\begin{proof} Let \begin{eqnarray}
  G_i:=\left\{
\begin{array}{ll}
  u_iu_{i+1}v_{i+1}v_{i+1+k}v_{i+1+2k}v_iu_i,    &\mbox{if}~k=\frac{n-1}{3};
  \\ \nonumber
   u_iv_{i}v_{i+k}v_{i+2k}v_{i+1}u_{i+1}u_i,      &\mbox{if}~k=\frac{n+1}{3},
   \nonumber
   \end{array}\right.
\end{eqnarray}
Then $ \Psi=\{G_i~|~0\leq i\leq n-1\}$ is the set of all copies of $C_6$ in
$P_{n,k}$ for $k\in \{\frac{n-1}{3},\frac{n+1}{3}\}$. Then $Ar(P_{n,k},
C_6)=Ar_{P_{n,k}}(\Psi)$. Recall that $|E(P_{n,k})|=3n$.

If $n\equiv 1\pmod 2$, then let $\phi: E(P_{n,k})\rightarrow \{1,2, \ldots,
\lfloor \frac{5n}{2} \rfloor\}$ be an edge-coloring of $P_{n,k}$ such that
$\phi(v_0v_k)=\phi(v_{k}v_{2k})=\phi(v_{2k}v_{3k})=1$,
$\phi(v_{ik}v_{(i+1)k})=\lceil \frac{i}{2} \rceil$ for all $3\leq i\leq n-1$ and
every other edges obtains distinct colorings from $\{\frac{n+1}{2},\ldots,
\lfloor \frac{5n}{2} \rfloor\}$ (Fig. 10(a) illustrates the coloring $\phi$ of
$P_{13,4}$). Then each $C_6\in \Psi$ contains two edges of $\{v_0v_k,
v_{k}v_{2k}, v_{2k}v_{3k}\}$ or two edges
$v_{(2i+1)k}v_{(2i+2)k},v_{(2i+2)k}v_{(2i+3)k}$ for some $1\leq i\leq
\frac{n-1}{2}-1$, and hence $P_{n,k}$ has no rainbow $C_6$ under the coloring
$\phi$.

If $n\equiv 0\pmod 2$, then let $\phi: E(P_{n,k})\rightarrow \{1,2, \ldots,
\lfloor \frac{5n}{2} \rfloor\}$ be an edge-coloring of $P_{n,k}$ such that
$\phi(v_{ik}v_{(i+1)k})=\lfloor \frac{i}{2} \rfloor+1$ for all $0\leq i\leq n-1$
and every other edges obtains distinct colorings from $\{\frac{n}{2}+1,\ldots,
\lfloor \frac{5n}{2} \rfloor\}$ (Fig. 10(b) illustrates the coloring $\phi$ of
$P_{16,5}$). Note that each $C_6\in \Psi$ contains two edges
$v_{2ik}v_{(2i+1)k},v_{(2i+1)k}v_{(2i+2)k}$ for some $0\leq i\leq \frac{n}{2}-1$,
and hence $P_{n,k}$ has no rainbow $C_6$ under the coloring $\phi$. So,
$Ar(P_{n,k}, C_6)\geq \lfloor \frac{5n}{2} \rfloor$ for $n\equiv 0 \pmod 2$. So,
in either case, $Ar(P_{n,k}, C_6)\geq \lfloor \frac{5n}{2} \rfloor$.

Now, we show that $Ar(P_{n,k}, C_6)\leq \lfloor \frac{5n}{2} \rfloor$. Let
$V(H_{P_{n,k},\Psi})=\{x_i~|~0\leq i\leq n-1\}$ and
$E(H_{P_{n,k},\Psi})=\{F_0,\ldots,F_{3n-1}\}$, where each $x_i$ corresponds to
$G_i$ in $\Psi$, each $F_i$ corresponds to  $u_iu_{i+1}$, $F_{i+n}$
corresponds to $u_iv_i$, $F_{i+2n}$ corresponds to $v_iv_{i+k}$ for $k\in
\{\frac{n-1}{3}, \frac{n+1}{3}\}$. Note that
for $0\leq i\leq n-1$, each $u_iu_{i+1}$ is contained in $G_i$, $u_iv_i$ is
contained in $G_i$, $G_{i+n-1}$, $v_iv_{i+\frac{n-1}{3}}$ is contained in
$G_{i+\frac{n-1}{3}}$, $G_{i+\frac{2n-2}{3}}$, $G_{i+n-1}$ when $k=\frac{n-1}{3}$, and $v_iv_{i+\frac{n+1}{3}}$ is contained in $G_i$, $G_{i+\frac{n-2}{3}}$,
$G_{i+\frac{2n-1}{3}}$ when $k=\frac{n+1}{3}$. It follows that $F_i=\{x_i\}$,
$F_{i+n}=\{x_i,x_{i+n-1}\}$ and
\begin{eqnarray}
   F_{i+2n}=\left\{
\begin{array}{ll}
   \{x_{i+\frac{n-1}{3}},x_{i+\frac{2n-2}{3}}, x_{i+n-1}\},
   &\mbox{if}~k=\frac{n-1}{3};               \\          \nonumber
   \{x_i,x_{i+\frac{n-2}{3}},x_{i+\frac{2n-1}{3}}\},
   &\mbox{if}~k=\frac{n+1}{3}.                           \nonumber
   \end{array}\right.
\end{eqnarray}
Obviously, $r(H_{P_{n,k},\Psi})=3$ and $s(H_{P_{n,k},\Psi})=1$, then by Lemma
\ref{2.2}, $Ar(P_{n,k}, C_6)=Ar_{P_{n,k}}(\Psi)\leq |E(P_{n,k})|-\lceil
\frac{2|\Psi|}{3+1} \rceil=3n-\lceil \frac{n}{2} \rceil=\lfloor \frac{5n}{2}
\rfloor$.
\end{proof}


\begin{lemma}
\label{3.21}
$Ar(P_{n,k}, C_6)=\lfloor \frac{7n}{3} \rfloor$ for $k=\frac{n-2}{2}\ge 4$.
\end{lemma}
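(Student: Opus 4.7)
My plan is to mirror the proof of Lemma \ref{3.7}, applying Lemma \ref{2.1} after identifying the full set $\Psi$ of $6$-cycles and computing $M(H_{P_{n,k},\Psi})$. Since $k=\frac{n-2}{2}\ge 4$ forces $n\ge 10$ and even, the case analysis inside the proof of Proposition \ref{1.1}(ii) shows that every $C_6$ in $P_{n,k}$ must satisfy $|V(C_6)\cap O_n|=3$: the remaining possibilities each demand $k\in\{3,\tfrac{n}{6},\tfrac{n\pm1}{3}\}$ or $n\le 6$, all incompatible with our hypothesis. Starting from three consecutive outer vertices $u_j,u_{j+1},u_{j+2}$, the identity $2k+2\equiv 0\pmod n$ then leaves a single way to close the cycle (the two ``other'' candidates for the middle inner vertex either collapse or fail to be adjacent in the inner rim), so $\Psi=\{G_j : 0\le j\le n-1\}$ with $G_j:=u_j u_{j+1}u_{j+2}v_{j+2}v_{j+k+2}v_j u_j$ and $|\Psi|=n$.

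The next step is to compute the hyperedges of $H_{P_{n,k},\Psi}$. A direct tally shows each outer edge $u_i u_{i+1}$ lies in $G_i$ and $G_{i-1}$; each spoke $u_i v_i$ lies in $G_i$ and $G_{i-2}$; and each inner edge $v_i v_{i+k}$ lies in $G_{i-2}$ and (using $2k+2\equiv 0\pmod n$) also in $G_{i+k}$. Hence every edge of $P_{n,k}$ sits in exactly two members of $\Psi$, and $H_{P_{n,k},\Psi}$ is a (loopless, parallel-edge-free) simple graph on $\{x_0,\dots,x_{n-1}\}$ whose $3n$ edges split into three cyclically invariant families: the Hamilton cycle $x_0 x_1\cdots x_{n-1}x_0$, the distance-$2$ chords $x_i x_{i+2}$, and the chords $x_i x_{i+k+2}$ (where $k+2=\tfrac{n+2}{2}$). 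The standard divisibility checks under $n\ge 10$ rule out both loops and any coincidence among these three families.

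The final and easiest step is to show $M(H_{P_{n,k},\Psi})=\lfloor n/3\rfloor$. For every $j$ the three vertices $x_{3j},x_{3j+1},x_{3j+2}$ span a triangle in $H_{P_{n,k},\Psi}$ (using two consecutive Hamilton edges together with one distance-$2$ chord), and choosing $j=0,1,\dots,\lfloor n/3\rfloor-1$ yields $\lfloor n/3\rfloor$ pairwise vertex-disjoint triangles; the reverse bound $M\le \lfloor n/3\rfloor$ is immediate since $|V(H_{P_{n,k},\Psi})|=n$ and every cycle in a simple graph has at least three vertices. Plugging into Lemma \ref{2.1} gives
\[
Ar(P_{n,k},C_6)=|E(P_{n,k})|-|\Psi|+M(H_{P_{n,k},\Psi})=3n-n+\lfloor n/3\rfloor=\lfloor 7n/3\rfloor.
\]
The only delicate point is Step~1 -- showing no other $C_6$'s exist -- but this reduces to the bookkeeping already laid out in the proof of Proposition \ref{1.1}(ii), so it is not a genuine obstacle.
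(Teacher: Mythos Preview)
Your proof is correct and follows essentially the same route as the paper: identify the single cyclic family $\Psi$ of $C_6$'s (your $G_j$ is the paper's $G_{j+1}$ up to reindexing), show that $H_{P_{n,k},\Psi}$ is the circulant on $n$ vertices with connection set $\{1,2,k+2\}$, exhibit $\lfloor n/3\rfloor$ disjoint triangles on consecutive triples, and apply Lemma~\ref{2.1}. Your extra care in verifying that $H_{P_{n,k},\Psi}$ is simple (hence $M\le\lfloor n/3\rfloor$) and that no other $C_6$'s arise from the case analysis of Proposition~\ref{1.1}(ii) fills in details the paper leaves implicit.
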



\begin{proof} Let $ G_i:=u_iu_{i+1}v_{i+1}v_{i+\frac{n}{2}}v_{i+n-1}u_{i+n-1}u_i$
for $0\leq i\leq n-1$. Then $\Psi=\{ G_i~|~0\leq i\leq n-1\}$ is the set of all
copies of $C_6$ in $P_{n,k}$, and $Ar(P_{n,k}, C_6)=Ar_{P_{n,k}}(\Psi)$. Note
that each edge $u_iu_{i+1}$ is contained in $G_i$ and $G_{i+1}$; each spoke
$u_iv_i$ is contained in $G_{i+1}$ and $G_{i+n-1}$; each edge
$v_iv_{i+\frac{n-2}{2}}$ is contained in $G_{i+\frac{n}{2}}$ and $G_{i+n-1}$ for
$0\leq i\leq n-1$. Thus $H_{P_{n,k},\Psi}$ is a graph obtained from a cycle
$w_0w_1\cdots w_{n-1}w_0$ by adding edges $w_{i+1}w_{i+n-1}$ and
$w_{i+\frac{n}{2}}w_{i+n-1}$ for $0\leq i\leq n-1$. Since there exist $\lfloor
\frac{n}{3} \rfloor$ vertex-disjoint cycles $w_{3i}w_{3i+1}w_{3i+2}w_{3i}$ in
$H_{P_{n,k},\Psi}$ for $0\leq i\leq \lfloor \frac{n}{3} \rfloor-1$, we have
$M(H_{P_{n,k},\Psi})=\lfloor \frac{n}{3} \rfloor$. By Lemma \ref{2.1},
$Ar(P_{n,k},C_6)=Ar_{P_{n,k}}(\Psi)=|E(P_{n,k})|-|\Psi|+M(H_{P_{n,k},\Psi})=\lfloor
\frac{7n}{3} \rfloor$.
\end{proof}

By Lemmas \ref{3.19}-\ref{3.21}, we complete the proof of Theorem \ref{1.6}.



\end{document}